\documentclass[11pt,a4paper]{article}
\usepackage{amssymb, amsthm, amsmath, amsfonts, mathrsfs}
\usepackage{array, epsfig}
\usepackage{bbm}
\usepackage[hidelinks]{hyperref}
\usepackage[numbers,square]{natbib}
\usepackage{color}
\usepackage{enumitem}

\setlength{\oddsidemargin}{-0.0in} \setlength{\textwidth}{6.5in}
\setlength{\topmargin}{-0.0in} \setlength{\textheight}{8.4in} \evensidemargin
\oddsidemargin
\parindent=8mm

\theoremstyle{plain}
\newtheorem{theorem}{Theorem}[section]
\newtheorem{lemma}[theorem]{Lemma}
\newtheorem{corollary}[theorem]{Corollary}
\newtheorem{proposition}[theorem]{Proposition}

\theoremstyle{definition}
\newtheorem{definition}[theorem]{Definition}
\newtheorem{example}[theorem]{Example}

\theoremstyle{remark}
\newtheorem{remark}[theorem]{Remark}

\DeclareMathOperator\supp{supp}

\newcommand{\dist}{\mathop{\mathrm{dist}}\nolimits}


\def\BB{\mathbb{B}}

\def\EE{\mathbb{E}}
\def\FF{\mathbb{F}}

\def\NN{\mathbb{N}}

\def\PP{\mathbb{P}}
\def\QQ{\mathbb{Q}}
\def\RR{\mathbb{R}}
\def\SS{\mathbb{S}}
\def\TT{\mathbb{T}}





\def\cB{\mathcal{B}}
\def\cC{\mathcal{C}}

\def\cF{\mathcal{F}}
\def\cG{\mathcal{G}}

\def\cL{\mathcal{L}}

\def\cN{\mathcal{N}}

\def\cP{\mathcal{P}}

\def\cT{\mathcal{T}}

\def\cV{\mathcal{V}}


\newcommand{\dd}{{\rm d}}
\newcommand{\conv}{\mathop{\mathrm{conv}}\nolimits}
\newcommand{\frI}{\mathop{\mathrm{I}}\nolimits}
\newcommand{\frD}{\mathop{\mathrm{D}}\nolimits}

\newcommand{\aff}{\mathop{\mathrm{aff}}\nolimits}
\newcommand{\lin}{\mathop{\mathrm{lin}}\nolimits}
\newcommand{\pow}{\mathop{\mathrm{pow}}\nolimits}

\newcommand{\proj}{\operatorname{proj}}
\newcommand{\inter}{\operatorname{int}}

\newcommand{\Vol}{\operatorname{Vol}}

\newcommand{\bd}{\operatorname{bd}}
\newcommand{\PPP}{\operatorname{PPP}}

\newcommand{\dint}{\textup{d}}

\newcommand{\apex}{\mathop{\mathrm{apex}}\nolimits}

\newcommand{\cl}{\mathop{\mathrm{cl}}\nolimits}




\DeclareMathOperator*{\argmin}{arg\,min}





\newcommand{\bsl}{\backslash}



\makeatletter
\let\@fnsymbol\@alph
\makeatother

\setlength{\parindent}{0pt}
\begin{document}

\title{\bfseries Poisson-Laguerre tessellations}

\author{Anna Gusakova\footnotemark[1],\; Mathias in Wolde-L\"ubke\footnotemark[2]}

\date{}
\renewcommand{\thefootnote}{\fnsymbol{footnote}}
\footnotetext[1]{M\"unster University, Germany. Email: gusakova@uni-muenster.de}
\footnotetext[2]{M\"unster University, Germany. Email: miwluebke@uni-muenster.de}

\maketitle

\begin{abstract}
    In this paper we introduce a family of Poisson-Laguerre tessellations in $\RR^d$ generated by a Poisson point process in $\RR^d\times \RR$, whose intensity measure has a density of the form $(v,h)\mapsto f(h)\dint h\dint v$, where $v\in\RR^d$ and $h\in\RR$, with respect to the Lebesgue measure. We study its sectional properties and show that the $\ell$-dimensional section of a Poisson-Laguerre tessellation corresponding to $f$ is an $\ell$-dimensional Poisson-Laguerre tessellation corresponding to $f_{\ell}$, which is up to a constant a fractional integral of $f$ of order $(d-\ell)/2$. Further we derive an explicit representation for the distribution of the volume weighted typical cell of the dual Poisson-Laguerre tessellation in terms of fractional integrals and derivatives of $f$. \\

\noindent {\bf Keywords}. {Laguerre tessellation; Poisson point process; sectional tessellation; typical cell; Riemann-Liouville fractional integral; regularly varying function; random simplex}  \\
{\bf MSC}. Primary 60D05; 60G55; Secondary 52A22; 52B11; 53C65.
\end{abstract}

\tableofcontents

\section{Introduction}

A random tessellation in $\RR^d$ is one of the classical and central models of stochastic geometry. A random tessellation is a locally finite covering of the space by convex polytopes with non-empty and disjoint interiors, which is typically generated by some Poisson process of simpler geometric objects (like points or hyperplanes). The study of random tessellations and the particular interest in these models are motivated both by their rich inner mathematical structures and by the number of applications in which such configurations naturally arise. For example, tessellations, and especially triangulations of a space, play a prominent role in finite element methods in numerical analysis and computer vision, data analysis, network modeling, astrophysics, and computational geometry (see \cite{AKL, BY98, CSKM, OBKC} and references therein). At the same time tessellations have proven to be a promising model for polycrystalline materials, plant cells, crack patterns or foam structures (see \cite{CSKM, OBKC} and references therein). Moreover, recently there were a few articles using random tessellations in machine learning \cite{ORT24, machinelearningtess}. 

One of the difficulties of using random tessellation in applications is related to the fact, that there are very few models for which rigorous results and exact formulas for basic mean characteristics are available. Two models of random tessellations, which appeared to be mathematically tractable and which have a number of applications, are the Poisson-Voronoi tessellation and its dual model, the Poisson-Delaunay tessellation. The construction of the Poisson-Voronoi tessellation may be described as follows. Let $\eta$ be a homogeneous Poisson point process in $\RR^d$ and for any point $v\in \eta$ we define its Voronoi cell as the set of all points $w\in \RR^d$, which are closer to $v$ then to any other point of the process $\eta$, namely
\[
    C(v,\eta):=\{w\in\RR^d\colon \|v-w\|\leq \|v'-w\| \text{ for all }v'\in\eta\}.
\]
Almost surely each Voronoi cell is a closed convex polytope and the collection of all Voronoi cells forms a Poisson-Voronoi tessellation. It is often useful to think about a Voronoi cell $C(v,\eta)$ as a crystal, which has started to grow at point $v$ and is growing with the same speed in all directions. The point $w$ belongs to the crystal, which has reached this point first. Together with the Poisson-Voronoi tessellation its dual model, the Poisson–Delaunay tessellation, is often considered. It can be obtained using the following procedure. We connect two distinct points $v_1,v_2\in\eta$ by an edge if and only if $C(v_1,\eta)\cap C(v_2,\eta)\neq \emptyset$. The resulting graph splits the space into convex polytopes, each of them is a simplex almost surely, giving rise to the tessellation called Poisson-Delaunay tessellation. There is a long list of literature devoted to the Poisson-Voronoi and Poisson-Delaunay tessellations, but despite the long history of study there are still some open questions related to the models above (see i.e. \cite{CSKM} and \cite[Chapter 10]{SW} for an overview). 

There are a few possibilities to generalize the construction above by considering the set $A$ of points $(v,h)\in \RR^d\times \RR$ and using weights $h$ to modify the definition of the Voronoi cell (see \cite[Section 3.1]{OBKC}). One of them is to use the so-called power function instead of the distance, namely
\[
    \pow\left(w,(v,h)\right) := \|w-v\|^2 + h,\qquad w\in\RR^d,
\]
leading to the definition of the Laguerre diagram. More precisely for any $(v,h)\in A$ we define its Laguerre cell as 
\[
    C((v,h), A) := \big\{w \in \RR^d: \pow(w,(v,h)) \le \pow(w,(v',h')) \text{ for all } (v',h') \in A\big\}, 
\]
which is again a convex and closed set. In terms of crystallization processes we may think of the set $C((v,h), A)$ as a crystal, which has started to grow at point $v\in\RR^d$ and at time $h\in\RR$, and is growing with the same speed in all directions, while the speed of growth decreases with time. As before the point $w\in\RR^d$ belongs to the crystal which has reached this point first. The crucial difference between the Voronoi and Laguerre cell is the fact that the Laguerre cell might be empty. This motivates the definition of the Laguerre diagram $\cL_d(A)$ as a collection of all non-empty Laguerre cells. The definition of the dual Laguerre diagram $\cL^*_d(A)$ is analogous to the definition of the Poisson-Delaunay tessellation as the dual of the Poisson-Voronoi tessellation (see Section \ref{sec:Construction} for the details).

Random Laguerre tessellations have been considered in the literature in a few special settings. In \cite{Ldoc,LZ08} the authors consider a Laguerre tessellation of an independent $\QQ$-marking $\xi$ of a homogeneous Poisson point process in $\RR^d$, where $\QQ$ is a probability measure on $(-\infty,0]$ satisfying some natural integrability assumptions (see Example \ref{ex:IndQMarking}). The number of explicit formulas including the description of the distribution for the typical cell of the dual model $\cL^*_d(\xi)$ can be found in \cite{Ldoc}. Another approach was used in \cite{GKT20, GKT21}, where as a set of generating points $A$ of a Laguerre tessellation the authors considered a Poisson point process $\eta$ on the product space $\RR^d\times E$, where $E$ is some possibly unbounded interval, with intensity measure $\Lambda$ of the form
\begin{equation}\label{eq:IntensityMeasure}
    \Lambda(\cdot)=\gamma\int_{\RR^d}\int_{E}f(h){\bf 1}((v,h)\in\cdot)\,\dint h\,\dint v,\qquad \gamma>0.
\end{equation}
More precisely in \cite{GKT20} the construction is based on the functions $f(h)={\rm const}\cdot h^{\beta}{\bf 1}(h\ge 0)$, $\beta>-1$, and $f(h)={\rm const}\cdot (-h)^{-\beta}{\bf 1}(h<0)$, $\beta>{d\over 2}+1$, which lead to the so-called $\beta$-Voronoi and $\beta'$-Voronoi tessellations, respectively, while in \cite{GKT21} the function $f(h)=e^{\lambda h}$, $\lambda>0$, has been considered leading to the definition of the Gaussian-Voronoi tessellation. These three families of random tessellations appeared to be well-tractable due to the connection of the distribution of the typical cell of the corresponding dual models with $\beta$-, $\beta'$- and Gaussian random simplices, which are well-studied models of random polytopes \cite{Miles_IRS, KZT20}. Further in \cite{sectional} intersections of $\beta$-, $\beta'$- and Gaussian-Voronoi tessellations with an affine subspace of dimension $1\leq \ell\leq d-1$ have been studied. In particular, it was shown that the intersection of a $d$-dimensional Poisson-Voronoi tessellation with an $\ell$-dimensional affine subspace has the same distribution as an $\ell$-dimensional $\beta$-Voronoi tessellation with $\beta=\frac{d-\ell}{2}-1$. The latter in combination with the properties of $\beta$-Voronoi tessellations lead to the new formulas for the expected intrinsic volumes of the typical Poisson-Voronoi cell.

In this article we aim to study a general model for random (Poisson)-Laguerre tessellation, focusing on the Laguerre tessellations $\cL_{d,\gamma}(f):=\cL_d(\eta)$ induced by the Poisson point process $\eta$ with intensity measure $\Lambda$ given by \eqref{eq:IntensityMeasure}, where $f$ is a general locally integrable and non-negative function. It should be noted that in the case when the probability measure $\QQ$ is absolutely continuous with respect to the Lebesgue measure and has density $q$, the point process $\xi$ is itself a Poisson point process on $\RR^d\times (-\infty,0]$ with intensity measure of the form \eqref{eq:IntensityMeasure}, where $f=q$. Consequently, our setting partially includes those studied in \cite{Ldoc, LZ08}. We will show that if $f$ is locally integrable on $\RR$, as in the cases of the $\beta$-Voronoi and Gaussian-Voronoi tessellations, the resulting construction $\cL_{d,\gamma}(f)$ is a random tessellation, provided that $f$ satisfies a mild and natural integrability assumption. Conversely, when $f$ is not locally integrable on $\RR$ but only on an open half-line $(-\infty, b)$, $b\in\RR$, like for example in the $\beta'$-Voronoi case, additional conditions on $f$ are required to ensure that the resulting construction is indeed a locally finite covering of $\RR^d$ by convex polytopes almost surely. It should be mentioned that while the proof in the first case follows a similar approach as in previous works, the second case is more intricate and requires new techniques. Next we will study the sectional properties of $\cL_{d,\gamma}(f)$ and show that its intersection with an $\ell$-dimensional affine subspace has the same distribution as $\cL_{\ell,\gamma}(f_{\ell})$, where $f_{\ell}$ is (up to a constant) a fractional integral of $f$ of order $(d-\ell)/2$ (Theorem \ref{thm:sectional}). This result generalizes \cite[Theorem 4.1]{sectional}. While our proof follows similar ideas, it is formulated in a different framework. A key challenge (which did not appear in \cite[Theorem 4.1]{sectional}) is to ensure that the sectional tessellation $\cL_{\ell,\gamma}(f_{\ell})$ is again a random tessellation. As before, handling the case, when $f$ is not locally integrable on $\RR$ requires 
the most of efforts. Additionally, for sufficiently good functions $f$, we derive an explicit representation for the distribution of the typical cell of the dual tessellation $\cL^*_{d,\gamma}(f)$ in terms of the function $f$ and its fractional integrals and derivatives (Theorem \ref{thm:typcell}). This representation extends \cite[Theorem 4.5]{GKT20} and \cite[Theorem 5.1]{GKT21}, and partially generalizes \cite[Theorem 3.3.1]{Ldoc}. The application of Theorem \ref{thm:typcell} requires verifying complicated integrability conditions on $f$, which for certain families of function are established in Proposition \ref{prop:FiniteNormalizationConst}. 
Finally, we conclude with a discussion of the canonical decomposition of the typical cells of the $\beta$-, $\beta'$- and Gaussian-Delaunay tessellations, which might be seen as a characterization of these three families. 

The rest of the paper is structured as follows. In Section 2 we collected some frequently used notations as well as basic facts about random tessellations and fractional calculus. Section 3 is devoted to the construction of Poisson-Laguerre tessellations and their dual tessellations, as well as to the study of their basic properties. In Section 4 we consider sectional properties of Poisson-Laguerre tessellations. Finally, in Section 5 we deal with the dual Poisson-Laguerre tessellation and study the distribution of its typical cell. 

\section{Preliminaries}

\subsection{Frequently used notation}

Given a set $A\subseteq\RR^d$ we denote by $\inter A$, $\cl A$ and $\bd A$ the interior, closure and boundary of $A$, respectively. In the case of a countable set $A$ we denote by $\# A$ its cardinality. Moreover for any $y\in\RR^d$ we write $A+y:=\{x+y\colon x\in A\}$ and for any $s\in \RR$ we set $sA:=\{sx\colon x\in A\}$. Further we write $\aff(A)$, $\lin(A)$ and $\conv(A)$ to denote the affine, linear and convex hull of $A$, respectively. Given $y_1,\ldots,y_{d+1}\in\RR^d$ we write $\Delta_{d}(y_1,\dots,y_{d+1})$ for the $d$-dimensional volume of the simplex $\conv(y_1,\ldots,y_{d+1})$ and $\nabla_{d}(y_1,\dots, y_d)$ for the $d$-dimensional volume of the polytope spanned by the vectors $y_1,\ldots,y_d$.

A closed Euclidean ball in $\RR^d$ with radius $r>0$ centered at $0$ is denoted by $\BB^d(r)$ and we set $\BB^d:=\BB^d(1)$. By $\sigma_{d-1}$ we denote the spherical Lebesgue measure on the $(d-1)$-dimensional unit sphere $\SS^{d-1}$, normalized in such a way that
$$
    \omega_d:=\sigma_{d-1}(\SS^{d-1})={2\pi^{d\over 2}\over \Gamma\left({d\over 2}\right)}.
$$

We denote by $\overline{\RR}=\RR\cup\{-\infty,\infty\}$ the extended system of real numbers, and by $\RR_+$ the set of non-negative real numbers. We will use the usual measure theoretical conventions: $0\cdot (\pm\infty):=0$, $c\cdot(\pm\infty)=(\pm\infty)\pm c:=\pm\infty$ for $c\in (0,\infty)$ and $e^{-\infty}:=0$. 

In what follows we shall represent points $x\in\RR^{d+1}$ in the form $x=(v,h)$ with $v\in\RR^{d}$ (called \textit{spatial} coordinate) and $h\in\RR$ (called \textit{height}, \textit{weight} or \textit{time} coordinate). Denote by ${\rm Ref}\colon\RR^{d+1}\to\RR^{d+1}$ a reflection map with respect to space hyperplane, namely ${\rm Ref}(v,h)=(v,-h)$, and for given $c\in\RR$ by $\tau_c\colon\RR\to\RR$ we denote the shift map, namely $\tau_c(x)=x+c$.

Let $\Pi$ (respectively, $\Pi^+$) be the standard downward  (respectively, upward) paraboloid, defined as
\begin{align*}
    \Pi&:=\{(v,h)\in\RR^{d+1}\colon h=-\|v\|^2\},\qquad \Pi^+:={\rm Ref}(\Pi)=\{(v,h)\in\RR^{d+1}\colon h=\|v\|^2\}.
\end{align*}
and let $\Pi_{(w,t)}$ be the translation of $\Pi$ by a vector $(w,t)\in\RR^{d+1}$, that is,
\[
    \Pi_{(w,t)}:=\{(v,h)\in\RR^{d}\times\RR\colon h=-\|v-w\|^2+t\}.
\]
The point $(w,t)$ is called the apex of the paraboloid $\Pi_{(w,t)}$ and is denoted by $\apex\Pi_{(w,t)}$. Given points $x_i=(v_i,h_i)$ for $i =1,\dots, d+1$ with affinely independent spatial coordinates $v_1,\ldots,v_{d+1}$ we denote by $\Pi(x_1,\dots,x_{d+1})$ the unique translation of the downward paraboloid $\Pi$ containing $x_1,\dots,x_{d+1}$. Given a set $A\subseteq \RR^{d+1}$ we define the hypograph and epigraph of $A$ as
\begin{align*}
    A^{\downarrow}:&=\{(v,h')\in\RR^{d}\times\RR\colon (v,h) \in A \text{ for some } h\ge h'\},\\
    A^{\uparrow}:&=\{(v,h')\in\RR^{d}\times\RR\colon (v,h) \in A \text{ for some } h\leq h'\}.
\end{align*}

For $k \in \{0,\ldots,d\}$ let $G(d,k)$ be the set of all $k$-dimensional linear subspaces of $\RR^d$. The unique rotation invariant probability measure on $G(d,k)$ is denoted by $\nu_k$. Given $L\in G(d,k)$ we denote by $L^{\perp}$ its orthogonal complement. Analogously, $A(d,k)$ denotes the $k$-dimensional affine subspaces of $\RR^d$ and it admits the unique (up to normalization) rigid motion invariant measure $\mu_k$, defined as
\[
    \mu_k(\cdot)=\int_{G(d,k)}\int_{L^\bot}{\bf 1}(L+x\in \cdot)\lambda_{L^\bot}(\dd x) \nu_k(\dd L).
\]

\subsection{Random tessellation}\label{sec:Rtessel}

In this subsection we recall the concept of a random tessellation and include a brief overview of basic properties. For a more detailed discussion we refer the reader to \cite[Chapter 10]{SW}. According to \cite[Definition 10.1.1]{SW} and \cite[Lemma~10.1.1]{SW} a tessellation (or a mosaic) $T$ in $\RR^d$ is a locally finite system of convex polytopes that cover the whole space and have non-empty disjoint interiors. The elements of $T$ are called {\textit{cells}}. 

Given a polytope $P$ we denote by $\cF_k(P)$, $k\in\{0,1,\ldots,d\}$, the set of its $k$-dimensional faces, where $\cF_d(P)=\{P\}$, and let $\cF(P):=\bigcup_{k=0}^{d}\cF_{k}(P)$. A tessellation $T$ is called {\textit{face-to-face}} if for all $P_1,P_2\in T$ we have
\[
    P_1\cap P_2\in(\cF(P_1)\cap\cF(P_2))\cup\{\emptyset\}.
\]
In the case of a face-to-face tessellation we also set $\cF_k(T):=\bigcup_{t\in T}\cF_k(t)$ for $k\in \{0,1,\dots,d\}$.

A face-to-face tessellation in $\RR^d$ is called {\textit{normal}} if each $F\in\cF_k(T)$ is contained in precisely $d+1-k$ cells for all $k\in\{0,1,\ldots,d-1\}$. We denote by $\TT$ the set of all face-to-face tessellations in $\RR^d$. By a {\textit{random tessellation}} in $\RR^d$ we understand a particle process $X$ in $\RR^d$ (see \cite[Section~4.1]{SW}) satisfying $X\in\TT$ almost surely.

\subsection{Fractional integrals and derivatives}\label{sec:FractionalIntegrals}
In what follows it will often be convenient to use common notations from fractional calculus. Here we will only collect the basic definitions and facts used in this article, while for more detailed discussions we refer the reader to \cite{samko1993fractional}, \cite{miller1993introduction} and \cite{TAFDE}.

Let $-\infty \le a < b \le \infty$ and $\alpha >0$. For any measurable function $f\colon (a,b)\to \RR_+$ the integral
\[
    \big(\frI_{a+}^{\alpha}f\big)(x):=\frac{1}{\Gamma\left(\alpha\right)} \int_{a}^{x} f(t) (x-t)^{\alpha-1}\dd t,\qquad x\in (a,b)
\]
is called \textit{fractional integral of order $\alpha$}. Note that for any non-negative $f$ and $\alpha\ge 1$ the function $\frI_{a+}^{\alpha}f$ is monotonically increasing and for any $c\in \RR$ we have
\begin{equation}\label{eq:FractionalIntegralShift}
    \big(\frI_{a+}^{\alpha}f\big)(x)
    =\big(\frI_{(a-c)+}^{\alpha}f\circ \tau_c\big)(x-c).
\end{equation}
Thus, we can restrict ourselves to the cases $a=0$ and $a=-\infty$. For $a=0$ the expression $\frI_{0+}^{\alpha}f$ is known as the \textit{Riemann-Liouville fractional integral}, while in the case $a=-\infty$ 
\begin{equation}\label{eq:Frac_Int}
    \big(\frI^{\alpha}f\big)(x):=\big(\frI^{\alpha}_{-\infty}f\big)(x)=\frac{1}{\Gamma\left(\alpha\right)} \int_{-\infty}^{x} f(t) (x-t)^{\alpha-1}\dd t,
\end{equation}
is often referred to as the \textit{Liouville version}. In the case $a=-\infty$ and $b<\infty$ we again use \eqref{eq:FractionalIntegralShift} with $c=b$ and, thus, without loss of generality may assume that $b=0$ in this case. Further for any $\alpha>0$ we have the following relation between fractional Riemann-Liouville $\frI_{0+}^{\alpha}$ and Liouville $\frI^{\alpha}$ integrals. For any $x<0$ by applying the change of variables $t=-u^{-1}$ we have
    \begin{equation}\label{eq:frac_f_g}
        \begin{aligned}
            \big(\frI^{\alpha}f\big)(x)&=\frac{1}{\Gamma(\alpha)}\int_{-\infty}^{x}f(t)(x-t)^{\alpha-1} \dd t\\
            &=(-x)^{\alpha-1}\frac{1}{\Gamma(\alpha)}\int_{0}^{-\frac{1}{x}}u^{-\alpha-1}f\left(-1/u\right)\left(-1/x-u\right)^{\alpha-1}\dd u\\
            &=(-x)^{\alpha-1}\big(\frI_{0+}^\alpha f_\alpha\big)\left(-1/x\right),    
        \end{aligned}
    \end{equation} 
    where $f_{\alpha}(u)=u^{-\alpha-1}f(-1/u)$, which is a function defined on $(0,\infty)$. 

\begin{remark}\label{rem:convention}
    It will often be convenient to assume that the function $f$ is defined on whole $\RR$ by setting $f(x)=0$ if $x\not\in (a,b)$. The value of the fractional integral $\frI_{a+}^{\alpha}f$ for $x>a$ will not change in this case and for $x\leq a$ we will have $ \big(\frI_{a+}^{\alpha}f\big)(x)=0$. Thus, if the dependence of $a$ is not essential we will use the Liouville version of the fractional integral $\frI^{\alpha}f$ in order to simplify the notation. 
    \end{remark}
    
    Using the above convention, we may define the fractional integral in terms of the  convolution of functions $f$ and $p(t)=t^{\alpha-1}{\bf 1}_{[0,\infty)}(t)$, namely
\begin{align*}
    \big(\frI^{\alpha}f\big)(x)={1\over \Gamma(\alpha)}(f*p)(x)={1\over \Gamma(\alpha)}\int_{-\infty}^{\infty}f(t)p(x-t)\dint t.
\end{align*}
In what follows we will be interested in the following class 
\[
    L_{{\rm loc}}^{1,+}(E):=\Big\{f\colon \inter E\to \RR_+ \colon \int_Kf(x)\dd x<\infty \, \forall K\subset E, \, K\text{ compact}\Big\},\qquad E\in\cB(\RR),
\] 
of non-negative functions which are locally integrable on $E$. In particular using the usual equivalence relation we will write $f=g$ for $f,g\in L_{{\rm loc}}^{1,+}(E)$ if $f(x)=g(x)$ in almost every point $x\in E$. It was shown in \cite[Lemma 2.1.]{MARTINEZ1992111} that for any $\alpha>0$ and $f\in L_{{\rm loc}}^{1,+}([a,\infty))$, where $a>-\infty$, we have $(\frI_{a+}^{\alpha}f)(x)<\infty$ at almost every $x>a$ and that $\frI_{a+}^{\alpha}f$ is locally integrable on $[a,\infty)$. Due to relation \eqref{eq:frac_f_g} we also conclude that $(\frI^{\alpha}f)(x)<\infty$ for almost every $x<0$ and any $\alpha>0$ if $f\colon(-\infty,0)\to[0,\infty)$ is such that $f_{\alpha}\in L_{{\rm loc}}^{1,+}([0,\infty))$. Also note that if $f_{\alpha}\in L_{{\rm loc}}^{1,+}([0,\infty))$, then $f_{\beta}\in L_{{\rm loc}}^{1,+}([0,\infty))$ for any $\beta<\alpha$.

In what follows we will often use the following fact. 

\begin{lemma}\label{lm:technical}
    If $f\in L_{\rm loc}^{1,+}((-\infty,b))$, where $-\infty<b\le \infty$, and $(\frI^{\alpha}f)(p)<\infty$ for some $p<b$ and $\alpha>1$, then $(\frI^{\beta}f)(p)<\infty$ for any $\beta\in [1,\alpha]$. 
\end{lemma}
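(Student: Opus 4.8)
The plan is to bound the order-$\beta$ fractional integral by splitting the domain of integration at distance $1$ from the point $p$ and comparing the weight $(p-t)^{\beta-1}$ against $(p-t)^{\alpha-1}$ on each piece. The guiding observation is that $s\mapsto (p-t)^{s}$ is monotone in the exponent in opposite directions depending on whether the base $p-t$ exceeds or falls below $1$. Since $f\ge 0$, every integrand is non-negative, so there are no absolute-convergence subtleties and all manipulations are justified by monotone convergence.

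First I would write
\[
    \Gamma(\beta)(\frI^{\beta}f)(p)=\int_{-\infty}^{p}f(t)(p-t)^{\beta-1}\,\dd t=\int_{-\infty}^{p-1}f(t)(p-t)^{\beta-1}\,\dd t+\int_{p-1}^{p}f(t)(p-t)^{\beta-1}\,\dd t,
\]
and estimate the two summands separately. On the tail $(-\infty,p-1]$ we have $p-t\ge 1$, and $\beta\le\alpha$ gives $\beta-1\le\alpha-1$, so $(p-t)^{\beta-1}\le (p-t)^{\alpha-1}$ there. Hence the first summand is at most
\[
    \int_{-\infty}^{p-1}f(t)(p-t)^{\alpha-1}\,\dd t\le \Gamma(\alpha)(\frI^{\alpha}f)(p),
\]
which is finite by hypothesis.

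On the near part $[p-1,p]$ we have $0\le p-t\le 1$, and $\beta\ge 1$ gives $\beta-1\ge 0$, whence $(p-t)^{\beta-1}\le 1$. Therefore the second summand is bounded by $\int_{p-1}^{p}f(t)\,\dd t$. Since $p<b$, the interval $[p-1,p]$ is a compact subset of $(-\infty,b)=\inter\big((-\infty,b)\big)$, so this integral is finite by the assumption $f\in L_{\rm loc}^{1,+}((-\infty,b))$. Adding the two bounds gives $(\frI^{\beta}f)(p)<\infty$, as desired.

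There is no substantial obstacle here; the argument is essentially a careful bookkeeping of which hypothesis controls which region. The only points requiring attention are the choice of split point, namely distance $1$, which is precisely where the base $p-t$ crosses the threshold $1$ and reverses the monotonicity of the power, and the verification that both assumptions $\beta\le\alpha$ (for the tail) and $\beta\ge 1$ (for the near part) are genuinely used, one on each piece. The compactness of $[p-1,p]$ inside the domain of local integrability, guaranteed by $p<b$, is what licenses the bound on the near part.
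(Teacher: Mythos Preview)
Your proof is correct and follows essentially the same approach as the paper: split the integral at $p-1$, bound the tail piece by $(p-t)^{\alpha-1}$ using $\beta\le\alpha$, and bound the near piece by $1$ using $\beta\ge 1$ together with local integrability on the compact interval $[p-1,p]\subset(-\infty,b)$. The paper presents this in a single displayed inequality, but the content is identical.
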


\begin{proof}
    Let $p<b$. Since $f$ is non-negative and locally integrable on $(-\infty,b)$ we have
    \[
        \Gamma(\beta)(\frI^{\beta}f)(p)=\int_{-\infty}^{p}f(t)(p-t)^{\beta-1}\dd t\leq \int_{-\infty}^{p-1}f(t)(p-t)^{\alpha-1}\dd t+\int_{p-1}^pf(t)\dd t<\infty.
    \]
\end{proof}

Further it shall be noted that for a non-negative measurable function $f$ and $\alpha, \beta > 0$ the semigroup property
\begin{equation}\label{eq:SemigroupProp}
    \frI_{a+}^\alpha \frI_{a+}^\beta f=\frI_{a+}^\beta \frI_{a+}^\alpha f = \frI_{a+}^{\alpha+\beta} f
\end{equation}
holds by Tonelli's theorem.

 As a counterpart to the fractional integral, the \textit{fractional derivative} of order $\alpha > 0$ is defined as
\[
    \frD_{a+}^\alpha f(x) := \frac{\dd^n}{\dd x^n} \frI_{a+}^{n-\alpha}f(x)=\frac{1}{\Gamma(n-\alpha)}\frac{\dd^n}{\dd x^n} \int_a^x f(t) (x-t)^{n-\alpha-1} \dd t,
\]
where $n=\lceil\alpha\rceil$ and $\lceil\alpha\rceil$ is the smallest integer greater than or equal to $\alpha$. Additionally we set $\frI_{a+}^0f=\frD_{a+}^0f=f$. Using the semigroup property for any $x>a$ and $\alpha>0$ we have
\[
    \frD_{a+}^\alpha \frI_{a+}^\alpha f (x)={\frac{\dd^n}{\dd x^n}}(\frI_{a+}^nf)(x)={\frac{\dd^{n-1}}{\dd x^{n-1}}}\Big(\frac{\dd}{\dd x}\int_{a}^{x}(\frI_{a+}^{n-1}f)(t)\dd t\Big),
\]
where $n=\lceil\alpha\rceil$. Further note that if $(\frI^1_{a+}f)(p)=\int_{a}^pf(t)\dd t<\infty$ for any $p<b$, then $\frI^1_{a+}f$ is continuous on $(a,b)$ and for almost all $x\in(a,b)$ 
\[
    \frac{\dd}{\dd x}(\frI^1_{a+}f)(x)=f(x)
\]
(see \cite[Proposition 6.3, Theorem 6.5]{Analysis}). Assuming $(\frI_{a+}^nf)(p)<\infty$ for any $p<b$ and $f\in L_{\rm loc}^{1,+}((-\infty,b))$ by Lemma \ref{lm:technical} we have $(\frI_{a+}^mf)(p)<\infty$ for any $1\leq m\leq n$ and $p<b$.
Thus, for almost all $x\in (a,b)$ we get
\[
    {\frac{\dd^n}{\dd x^n}}(\frI_{a+}^nf)(x)=\frac{\dd^{n-1}}{\dd x^{n-1}}(\frI_{a+}^{n-1}f)(x)=\ldots=\frac{\dd}{\dd x}(\frI^1_{a+}f)(x)=f(x),
\]
which leads to 
\begin{equation}\label{eq:DiffIntInverse}
     \frD_{a+}^\alpha \frI_{a+}^\alpha f (x) =  f(x).
\end{equation}
In particular the equality holds if $f$ is continuous in $x$. Hence the fractional differentiation is an operation inverse to the fractional integration from the left.

\section{Construction of Poisson-Laguerre tessellation}\label{sec:Construction}

In this section we will introduce the random Laguerre tessellation, which is a generalized (weighted) version of the well-known \textit{Poisson-Voronoi tessellation}. More precisely we will be interested in tessellations in $\RR^d$, whose construction is based on a set of points of the form $(v,h)$, $v\in\RR^d$ (spatial coordinate), $h\in \RR$ (weight), which are given by an inhomogeneous Poisson point process in $\RR^d\times\RR$. 

\subsection{Definition of (deterministic) Laguerre tessellation}\label{sec:Ltessel}

We start by introducing the deterministic construction of a Laguerre tessellation using two different approaches. The first approach arises as a generalization of the definition of a Voronoi tessellation for a set of points with weights, while the second is used in computational geometry and relies on a vertical projection of a $(d+1)$-dimensional polyhedral set to $\RR^d$ as in \cite{BY98}. 

\subsubsection{Definition via power function} \label{sec:ConstructionStandard}

Given two points $v, w \in \RR^d$ and $h \in \RR$ we define the \textit{power} of $w$ with respect to the pair $(v,h)$ as
\begin{align}
    \pow\left(w,(v,h)\right) := \|w-v\|^2 + h.\label{eq:PowerFunction}
\end{align} 
As before $h$ is referred to as the weight of $v$. Let $A$ be a countable set of points of the form $(v,h) \in \RR^d\times \RR$. We define the \textit{Laguerre cell} of 
$(v,h) \in A$ as
\begin{align*}
    C\left((v,h), A\right) := \big\{w \in \RR^d: \pow(w,(v,h)) \le \pow(w,(v',h')) \text{ for all } (v',h') \in A\big\}. 
\end{align*} 
The collection of all Laguerre cells of $A$, which have non-vanishing topological interior, is called the \textit{Laguerre diagram}
\[
    \cL_d(A):=\{C((v,h),A)\colon (v,h)\in A,\, \inter{C((v,h),A)}\neq\emptyset \}.
\]
If all points $(v,h)\in A$ have the same weight $h\equiv h_0$, the diagram $\cL_d(A)$ coincides with the \textit{Voronoi diagram} of the set $\{v\colon (v,h)\in A\}$. Intuitively, we can think of $\cL_d(A)$ as a result of a crystallization process. More precisely, for a point $(v,h)\in A$ let $v\in \RR^d$ be the location and $h\in \RR$ be the time at which a crystal starts to grow. The evolution of the given crystal $(v,h)$ is described as follows, by any time $t>h$ the crystal covers the ball of radius $\sqrt{t}$ around $v$. For a generic point $w\in \RR^d$ the value $\pow(w,(v,h))$ is the time 
 which the crystallization process needs to reach the point $w$. Hence, $C((v,h),A)$ is the collection of points, which are reached by the crystal starting at $v$ and at time $h$ before they are reached by any other crystal.

 \subsubsection{Definition via vertical projection of \texorpdfstring{$(d+1)$}{}-dimensional polyhedral set}\label{sec:polyhedral}

Let us now introduce an alternative approach to the construction of the Laguerre diagram $\cL_d(A)$ based on methods from computational geometry \cite{BY98}. We will use this construction in the proof of Theorem \ref{thm:sectional}. 

As before let $A$ be a countable set of points $(v,h) \in \RR^d \times \RR$. We will identify a point $(v,h) \in A$ with the $d$-dimensional sphere $S(v,\sqrt{-h})$ with center $v$ and (potentially imaginary) radius $\sqrt{-h}$. We call two spheres $S(v_1,r_1)$, $S(v_2,r_2)$ orthogonal if $\pow(v_2,(v_1,-r_1^2))=r_2^2$ or equivalently $\pow(v_1,(v_2,-r_2^2))=r_1^2$. Consider the transformation $\phi \colon S(v,r) \mapsto (v, \|v\|^2 - r^2)$ which maps $(d-1)$-dimensional spheres to points in $\RR^d \times \RR$. Interpreting points in $\RR^d$ as spheres of radius $0$ we get that the image of $\RR^d$ under $\phi$ is the standard upward paraboloid $\Pi^+$ and in this case we often write $\phi(w):=\phi(S(w,0))$. It should be noted that for arbitrary $v\in\RR^d$ the preimage under $\phi$ of the vertical line in $\RR^{d+1}$ passing through $(v,0)$ is the set of spheres centered at $v$. Spheres with real radius are mapped to points from $(\Pi^+)^{\downarrow}$ (lying below $\Pi^+$), while spheres with imaginary radius correspond to points from $(\Pi^+)^{\uparrow}$ (lying above $\Pi^+$). For $x \in \RR^{d+1}$ we define the polar hyperplane of $x$ with respect to the quadric $\Pi^+$ as
\begin{align*}
    x^\circ_{\Pi^+} := \left\{y \in \RR^{d+1}: (x_1,\dots , x_{d+1}, 1) \Delta_{\Pi^+} (y_1, \dots, y_{d+1}, 1)^T = 0\right\}, 
\end{align*}
where
\begin{align*}
    \Delta_{\Pi^+} := 
        \begin{pmatrix}
        I_d & 0 & 0\\
        0 & 0 & -1/2\\
        0 & -1/2 & 0
        \end{pmatrix}.
\end{align*}
We will often write simply $x^{\circ}$ if it is clear from the context with respect to which quadric polarity is considered. 
Then \cite[Lemma~17.2.1]{BY98} implies that the set of spheres orthogonal to a given sphere $S(v,r)$ is mapped by $\phi$ to $\phi(S(v,r))^o$. Furthermore, \cite[Lemma~17.2.3]{BY98} shows that $\pow(w,(v,-r^2))$, where $w\in \RR^d$, is the signed vertical distance between the point $\phi(w)$ and the hyperplane $\phi(S(v,r))^\circ$, i.e.\ the difference of the $(d+1)$-th coordinates of $\phi(w)$ and the unique point $y\in\phi(S(v,r))^o$ with $(y_1,\dots,y_d)=w$. Indeed,
\begin{align*}
    \phi\left(S\left(v,r\right)\right)^\circ=\left\{(y,y_{d+1}) \in \RR^d \times \RR: y_{d+1}=\| y \|^2 -\| v-y \|^2 +r^2\right\}
\end{align*}
yields that the signed vertical distance between $\phi(w)=(w,\|w\|^2)$ and $\phi(S(v,r))^\circ$ equals
\begin{align*}
    \|w\|^2 - (\| w \|^2 -\| v-w \|^2 +r^2) = \| v-w \|^2 -r^2 = \pow(w,(v,-r^2)).
\end{align*}

We now define an unbounded convex closed set $P(A)$ as the intersection of the epigraphs of all polar hyperplanes 
$\phi(S(v,\sqrt{-h}))^{\circ}$, where $(v,h) \in A$, i.e.
\begin{align}\label{def:polyhedralset}
    P(A):=\bigcap_{(v,h)\in A}\big[\phi(S(v,\sqrt{-h}))^{o}\big]^{\uparrow}.
\end{align}
Note that in the case when $A$ is locally finite $P(A)$ is a polyhedral set. For a facet (i.e. $d$-dimensional face) $F$ of $P(A)$ (see \cite[Section 1.4 and Section 2.1]{Schneider} for a definition) we denote by $\proj_{\RR^d}(F)$ the orthogonal projection of $F$ to $\RR^d$. We show that the collection of non-empty Laguerre cells coincides with
\[
    \mathcal{P}_d(A):=\{\proj_{\RR^d}F:F\text{ is a facet of }P(A)\}.
\]
By definition and \cite[Theorem 2.1.2]{Schneider} we have that $\mathcal{P}_d(A)$ is a collection of convex sets with non-vanishing and non-intersecting topological interiors. Let $(v,h)\in A$ and without loss of generality let $F(v,h):=\phi(S(v,\sqrt{-h}))^\circ \cap \bd P(A) \neq \emptyset$ (otherwise the corresponding Laguerre cell $C((v,h),X)$ is empty and does not influence the Laguerre diagram). Note in this case $F(v,h)$ is a support set and, hence, is a face of $P(A)$.  Let $\proj_{\RR}:\RR^{d+1} \to \RR, (w,r)\mapsto r$, be the projection onto the $(d+1)$-th coordinate. By the definition of $P(A)$ we have 
\begin{align*}
    F(v,h)&=\{(w,t)\in \RR^d \times \RR: (w,t) \in \phi(S(v,\sqrt{-h}))^\circ, \\
    &\hspace{5cm} t \ge \proj_{\RR}(\phi(S(v',\sqrt{-h'}))^\circ) \quad \forall (v',h') \in A\}\\
    &=\{(w,t)\in \RR^d \times \RR: t=\|w\|^2-\|v-w\|^2-h,\\
    &\hspace{5cm} t \ge \|w\|^2-\|v'-w\|^2-h' \quad \forall (v',h') \in A\}\\
    &=\{(w,t)\in \RR^d \times \RR:  t=\|w\|^2-\|v-w\|^2-h,\\
    &\hspace{5cm}\pow(w,(v,h)) \le \pow(w,(v',h')) \quad \forall (v',h') \in A\}.
\end{align*}
The latter implies
\[
    \proj_{\RR^d} F(v,h) = C((v,h),A).
\]
On the other hand we have $\inter \proj_{\RR^d} F(v,h)\neq \emptyset$ if and only if $F(v,h)$ has dimension $d$ and, hence, 
\[
    \cL_d(A)=\cP_d(A).
\]
For a more detailed overview of the above interpretation we refer the reader to \cite[Chapters 17,18]{BY98}.

\subsubsection{Properties of Laguerre diagrams}\label{sec:LaguerreDiagramProperties}

It shall be emphasized that $\cL_d(A)$ is not necessarily a tessellation in the sense of the definition given in Section \ref{sec:Rtessel}. From the definition of the Laguerre diagram we already get that the cells are convex, closed and have non-empty disjoint interiors (see \cite[Section~2]{Sch93} and \cite[Section~2.2]{Ldoc} for more details). In \cite[Proposition 1, Step 5]{Sch93} it has also been shown that the cells are in face-to-face position. Hence it remains to check whether the collection is a locally finite covering of $\RR^d$ and that the cells are bounded. Putting some additional restrictions on the set $A$ we may ensure $\cL_d(A)\in\TT$. 

As in \cite[Definition 2.2.1]{Ldoc} we introduce the following \textit{regularity conditions} on $A$:
\begin{enumerate}
    \item[(P1)] $\conv(v\colon (v,h)\in A)=\RR^{d}$.\label{item:P1}
    \item[(P2)] For every $w\in\RR^{d}$ and every $t\in \RR$ there are only finitely many $(v,h)\in A$ satisfying $\pow (w,(v,h))\leq t$.\label{item:P2}
\end{enumerate}
Further, we say the points of $A$ are in \textit{general position} if:
\begin{enumerate} 
    \item[(P3)] No $d+2$ points $(v_0,h_0),\dots, (v_{d+1},h_{d+1})$ 
    of $A$ lie on the same downward paraboloid of the form
    \[
        \{(v,h)\in \RR^{d}\times\RR:  \|v - w\|^2 + h = t\}
    \]
    with  $(w,t)\in\RR^{d}\times \RR$.\label{item:P3}
    \item[(P4)] No $k+1$ nuclei are contained in a $(k-1)$-dimensional affine subspace of $\RR^d$ for $k=2,\ldots,d$.\label{item:P4}
\end{enumerate} 
In \cite[Proposition 2.2.4]{Ldoc} and \cite[Proposition~1]{Sch93} it was shown that if $A$ satisfies the regularity conditions \hyperref[item:P1]{(P1)} and \hyperref[item:P2]{(P2)}, then $\cL_d(A)$ is a tessellation. Further in \cite[Corollary 2.2.7]{Ldoc} it has been shown that under condition \hyperref[item:P3]{(P3)} and \hyperref[item:P4]{(P4)} the tessellation $\cL_d(A)$ is normal.

It should be noted that while condition \hyperref[item:P1]{(P1)} is necessary for $\cL_d(A)$ being a tessellation, condition \hyperref[item:P2]{(P2)} is not. Following \cite{Sch93} we may slightly relax the regularity conditions and say that $A$ is \textit{admissible} if the Laguerre diagram $\cL(A)$ is a locally finite covering of $\RR^d$ by convex polytopes, which are in face-to-face positions and $A$ satisfies \hyperref[item:P1]{(P1)}. Note that if $A$ is regular then $A$ is admissible and if $A$ is admissible, then $\tilde A:=\{(v,h)\in A\colon \inter C((v,h),A)\neq\emptyset\}$ is regular (see \cite[Proposition 2]{Sch93}).

\subsubsection{Dual model}\label{sec:DualModel}

Given a countable set of points $A$ in $\RR^d \times \RR$ which is admissible and satisfies \hyperref[item:P3]{(P3)} and \hyperref[item:P4]{(P4)}, we will associate with a tessellation $\cL_d(A)$ its \textit{dual tessellation} in the same fashion as the definition of the \textit{Delaunay triangulation} as dual model of the Voronoi tessellation. More precisely, for a given vertex $z\in\cF_0(\cL_d(A))$ we construct the \textit{Delaunay cell} $D(z,A)$ as 
\[
    D(z,A) := \conv(v:(v,h)\in A,z\in C((v,h),A)).
\]
Since $\cL_d(A)$ is a normal tessellation in $\RR^d$ we have that for a given vertex $z\in \cF_0(\cL_d(A))$ there exist exactly $d+1$ points $(v_1,h_1),\dots,(v_{d+1},h_{d+1})$ of $A$ such that $z\in C((v_i,h_i),A)$ for all $i=1,\dots,d+1$. Therefore the Delaunay cell $D(z,A)$ is a simplex. We define the \textit{dual Laguerre tessellation} $\cL_d^*(A)$ as the collection of all these Delaunay cells, i.e.
\[
    \cL_d^*(A):=\{D(z,A):z\in \cF_0(\cL_d(A))\},
\]  
which is a simplicial tessellation as follows from \cite[Proposition 2]{Sch93}. 

It should be noted, that $\cL_d^*(A)$ is a Laguerre tessellation itself. Indeed, since $\cL_d(A)$ is normal we have that for any $z\in\cF_0(\cL_d(A))$ there is a unique number $K_z\in \RR$ such that there exist exactly $d+1$ points $(v_1,h_1),\dots,(v_{d+1},h_{d+1})$ of $A$ with
\[
    \pow(z,(v_1,h_1))=\ldots=\pow(z,(v_{d+1},h_{d+1}))=K_z
\]
and there is no other point $(v,h)$ of $A$ with $\pow(z,(v,h)) < K_z$. Recalling the definition of $\pow(z,(v,h))=\|z-v\|^2+h$ and $\inter \Pi_{(z,K_z)}^{\downarrow}=\{(v,h)\in \RR^d \times \RR: h < - \|z-v\|^2 + K_z \}$ we see that the latter condition is equivalent to saying that the points $(v_1,h_1),\dots,(v_{d+1},h_{d+1})$ belong to the downward paraboloid $\Pi_{(z,K_z)}$ with apex $(z,K_z)$ and $A\cap \inter \Pi_{(z,K_z)}^{\downarrow}=\emptyset$. Let
\begin{equation}\label{eq:Astar}
    A^*:=\{(z,-K_z)\colon z\in \cF_0(\cL_d(A))\},
\end{equation}
then $A^*$ is regular and $\cL_d^*(A)=\cL_d(A^*)$ (see \cite[Proposition 2]{Sch93}).

\subsection{Definition and some properties of Poisson-Laguerre tessellations} \label{sec:randomL}
In this paper we are interested in the situation when $A$ is given by the support of some (inhomogeneous) Poisson point processes in $\RR^d\times\RR$. More precisely, let $E \subseteq \RR$ be an interval of one of the following types: 
\begin{itemize}
    \item[(i)] $E=[a,\infty)$ for some $a\in\RR$;
    \item[(ii)] $E=(-\infty,b)$ for some $b\in\RR$; 
    \item[(iii)] $E=\RR$, 
\end{itemize}
and let $f\in L_{{\rm loc}}^{1,+}(E)$. In this case $f$ defines a locally finite diffuse measure $\Lambda_{f,\gamma}$ on $\RR^d\times E$ as 
\begin{align*}
    \Lambda_{f,\gamma}(B\times A):=\gamma\int_B\int_A f(h) \dd h \,\dd v,\qquad \gamma\in(0,\infty),
\end{align*}
for any Borel sets $B \subseteq \RR^d$ and $A \subseteq E$. Let $\eta_{f,\gamma}$ be a Poisson point process in $\RR^d \times E$ with intensity measure $\Lambda_{f,\gamma}$. 

\begin{remark}\label{rmk:intervals}
    Note that we do not require that the support of $f$ is $E$ and in general $f$ may be supported on a smaller set. On the other hand the condition $f\in L_{\rm loc}^{1,+}(E)$ puts some restrictions on the type of the interval $E$ for a given function, but not necessarily determines it uniquely. Thus, for example any function $f\colon (a,\infty)\to \RR_+$, which is locally integrable on $[a,\infty)$ may also be defined on $\RR$ by setting $f(x)=0$ for $x\leq a$ and will still be locally integrable on $\RR$. We prefer to distinguish the cases (i) and (iii) since in some situations stronger statements can be shown in the case of interval (i), but not for the interval (iii). On the other hand the difference between the cases (ii) and (iii) is more essential since there are functions $f\colon (-\infty,b)\to\RR_+$, which are locally integrable on $(-\infty,b)$ but not locally integrable on $(-\infty,b]$. Considering the functions defined on the interval of type (ii) we will often assume, that they are not locally integrable on $\RR$, and these functions have to be treated more carefully (as in Theorem \ref{thm:FICondition}). 
\end{remark}

The fractional integral of $f$ appears naturally when we consider the properties of the Poisson point process $\eta_{f,\gamma}$. Namely it describes the expected number of points of $\eta_{f,\gamma}$ lying below the standard downward paraboloid with given apex as stated in the next lemma.

\begin{lemma}\label{lm:IntensityParabola}
    Let $E$ be the interval of type (i), (ii) or (iii) and $f\in L_{\rm loc}^{1,+}(E)$. Then for any $\gamma\in (0,\infty)$ and $(w,t)\in \RR^d\times \RR$ we have
    \[
    \EE(\eta_{f,\gamma}(\inter \Pi^{\downarrow}_{(w,t)}))=\EE(\eta_{f,\gamma}(\Pi^{\downarrow}_{(w,t)}))=\gamma\pi^{\frac{d}{2}}(\frI^{{d\over 2}+1}f)(t),
    \]
    where $\frI^{{d\over 2}+1}f$ is defined by \eqref{eq:Frac_Int}.
\end{lemma}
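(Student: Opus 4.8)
The plan is to reduce the statement to the defining property of a Poisson point process, namely $\EE[\eta_{f,\gamma}(B)]=\Lambda_{f,\gamma}(B)$ for every Borel set $B\subseteq\RR^d\times E$, and then to evaluate the resulting integral explicitly. First I would dispose of the difference between the interior and closed versions: the boundary paraboloid $\Pi_{(w,t)}$ is a $d$-dimensional hypersurface, hence Lebesgue-null in $\RR^{d+1}$, while $\Lambda_{f,\gamma}$ is absolutely continuous with respect to Lebesgue measure. Therefore $\Lambda_{f,\gamma}(\Pi_{(w,t)})=0$ and
\[
    \EE\big(\eta_{f,\gamma}(\inter \Pi^{\downarrow}_{(w,t)})\big)=\EE\big(\eta_{f,\gamma}(\Pi^{\downarrow}_{(w,t)})\big)=\Lambda_{f,\gamma}(\Pi^{\downarrow}_{(w,t)}).
\]

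Next I would make the region explicit. By the definition of the hypograph, $\Pi^{\downarrow}_{(w,t)}=\{(v,h)\in\RR^d\times\RR\colon h\le t-\|v-w\|^2\}$, so that, using the convention of Remark \ref{rem:convention} to extend $f$ by zero outside $E$,
\[
    \Lambda_{f,\gamma}(\Pi^{\downarrow}_{(w,t)})=\gamma\int_{\RR^d}\int_{-\infty}^{t-\|v-w\|^2}f(h)\,\dint h\,\dint v.
\]
Using the translation invariance of Lebesgue measure to reduce to $w=0$ and then passing to polar coordinates (which produces the factor $\omega_d$), the substitution $s=r^2$ turns $r^{d-1}\,\dint r$ into $\tfrac12 s^{d/2-1}\,\dint s$, giving
\[
    \Lambda_{f,\gamma}(\Pi^{\downarrow}_{(w,t)})=\frac{\gamma\,\omega_d}{2}\int_0^\infty s^{d/2-1}\int_{-\infty}^{t-s}f(h)\,\dint h\,\dint s.
\]

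Then I would interchange the order of integration, which is permissible by Tonelli's theorem since the integrand is non-negative; this also makes the identity valid as an equality in $[0,\infty]$, with no finiteness assumption required. The inner integral becomes $\int_0^{t-h}s^{d/2-1}\,\dint s=\tfrac{2}{d}(t-h)^{d/2}$ for $h<t$, whence
\[
    \Lambda_{f,\gamma}(\Pi^{\downarrow}_{(w,t)})=\frac{\gamma\,\omega_d}{d}\int_{-\infty}^{t}f(h)(t-h)^{d/2}\,\dint h=\frac{\gamma\,\omega_d}{d}\,\Gamma\!\Big(\tfrac{d}{2}+1\Big)(\frI^{d/2+1}f)(t),
\]
the last integral being $\Gamma(d/2+1)(\frI^{d/2+1}f)(t)$ by \eqref{eq:Frac_Int} with $\alpha=d/2+1$.

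Finally, the constant collapses: using $\omega_d=2\pi^{d/2}/\Gamma(d/2)$ together with $\Gamma(d/2+1)=(d/2)\Gamma(d/2)$ one checks $\tfrac{\omega_d}{d}\Gamma(d/2+1)=\pi^{d/2}$, which yields the claimed value $\gamma\pi^{d/2}(\frI^{d/2+1}f)(t)$. I do not expect a genuine obstacle here, as the argument is a routine computation once the region is written correctly. The only points demanding care are the zero-extension convention (so that the $h$-integral over $E$ may be written over $(-\infty,t)$ and matched with the Liouville fractional integral), the Tonelli justification for swapping the integrals (which is precisely what keeps the argument valid even when both sides are infinite), and the bookkeeping of the Gamma-function constants.
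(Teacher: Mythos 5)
Your proposal is correct and follows essentially the same route as the paper: both reduce the claim to evaluating $\Lambda_{f,\gamma}(\Pi^{\downarrow}_{(w,t)})$ and compute the resulting double integral via Tonelli, with only the cosmetic difference that you integrate in $v$ by polar coordinates before swapping the order, whereas the paper integrates out $v$ for fixed $h$ directly as the volume of a ball of radius $(t-h)^{1/2}$. Your explicit justification that the interior and closed hypographs agree up to a Lebesgue-null paraboloid (which the paper leaves implicit) and the constant bookkeeping are both correct.
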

\begin{proof}
    By Campbell's theorem \cite[Theorem 3.1.2]{SW} we have
    \begin{align*}
        \EE(\eta_{f,\gamma}(\inter \Pi^{\downarrow}_{(w,t)}))=\EE(\eta_{f,\gamma}(\Pi^{\downarrow}_{(w,t)})) &= \gamma\int_{\RR^d} \int_{E} {\bf 1}\left(h\le-\|v-w\|^2+t\right) f(h) \dd h \dd v \\
        &= \gamma\int_{\RR}{\bf 1}(h\le t) f(h) \int_{\RR^d} {\bf 1}\left(\lVert v \rVert \le (t - h)^\frac{1}{2}\right)\dd v\dd h\\ 
        &= \frac{\pi^{\frac{d}{2}}\gamma}{\Gamma({\frac{d}{2}}+1)}\int_{-\infty}^t f(h)(t - h)^\frac{d}{2}\dd h \\
        &= \gamma\pi^{\frac{d}{2}} (\frI^{\frac{d}{2}+1}f)(t),
    \end{align*}
    which finishes the proof.
\end{proof}

Next we specify the conditions for $f$ such that $\eta_{f,\gamma}$ is almost surely admissible and the points of $\eta_{f,\gamma}$ are in general position. In this case we have that $\cL_d(\eta_{f,\gamma})$ is almost surely a normal random tessellation (see Section \ref{sec:LaguerreDiagramProperties}).

\begin{theorem}\label{thm:FICondition}
    Let $E$ be the interval of type (i), (ii) or (iii) and consider $f\in L_{\rm loc}^{1,+}(E)$ satisfying
    \begin{equation}\label{eq:FICondition}
        (\frI^{\frac{d}{2}+1}f)(t) <\infty
    \end{equation}
    for all $t\in E$, and in the case of an interval of type (ii) we additionally assume that there exists $\varepsilon > 0$ and $n_0\in \NN$ such that for all $n\ge n_0$ we have
    \begin{align}\label{eq:frI_conv}    
        (\frI^{{d\over 2}+1}f)(b-1/n) \ge n^\varepsilon,
    \end{align}
    where $\frI^{{d\over 2}+1}f$ is defined by \eqref{eq:Frac_Int}. Then $\cL_d(\eta_{f,\gamma})$ is almost surely a normal random tessellation for any $\gamma\in(0,\infty)$.
\end{theorem}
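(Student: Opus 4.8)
The plan is to verify, almost surely, the regularity conditions recalled in Section~\ref{sec:LaguerreDiagramProperties}: conditions (P1), (P3), (P4) in all cases, together with (P2) in the cases (i) and (iii), and the relaxed admissibility directly in the harder case (ii). Once (P1) and (P2) hold a.s., $\cL_d(\eta_{f,\gamma})$ is a tessellation by \cite[Prop.~2.2.4]{Ldoc} and \cite[Prop.~1]{Sch93}, and (P3)--(P4) upgrade it to a normal one by \cite[Cor.~2.2.7]{Ldoc}; in case (ii) admissibility plays the role of (P1)+(P2) via \cite[Prop.~2]{Sch93}. I would begin with the two ``generic'' conditions. Since $\Lambda_{f,\gamma}$ is absolutely continuous with respect to Lebesgue measure, the multivariate Mecke equation \cite{SW} expresses the expected number of $(d+2)$-tuples of points lying on a common downward paraboloid, resp.\ of $(k+1)$-tuples whose spatial coordinates span an at most $(k-1)$-dimensional flat, as an integral of an indicator supported on a Lebesgue-null set; hence both vanish and (P3), (P4) hold a.s. For (P1) I would note that the spatial projection of $\eta_{f,\gamma}$ is a stationary Poisson process of intensity $\gamma\int_E f(h)\,\dd h\in(0,\infty]$ per unit volume: if this is infinite every ball contains infinitely many nuclei, while if it is finite and positive one has a homogeneous process, and in both cases $\conv\{v:(v,h)\in\eta_{f,\gamma}\}=\RR^d$ a.s.

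For cases (i) and (iii) I would establish (P2). By Lemma~\ref{lm:IntensityParabola} the expected number of points in $\Pi^{\downarrow}_{(w,t)}$ is $\gamma\pi^{d/2}(\frI^{d/2+1}f)(t)$, finite by \eqref{eq:FICondition}. To obtain the statement uniformly in $(w,t)$, fix $R,t\in\NN$ and observe that every point lying in $\Pi^{\downarrow}_{(w,t)}$ for some $w\in\BB^d(R)$ satisfies $h\le t$ and $\|v\|\le R+\sqrt{t-h}$; the $\Lambda_{f,\gamma}$-measure of this region is bounded by a constant times $R^d\int_{-\infty}^t f(h)\,\dd h + (\frI^{d/2+1}f)(t)$, which is finite (in case (iii) one first checks $\int_{-\infty}^t f(h)\,\dd h<\infty$, which follows from \eqref{eq:FICondition}). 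Thus a.s.\ only finitely many points fall there, giving (P2) for all $w\in\BB^d(R)$ at level $t$; exhausting over $R,t\in\NN$ and using monotonicity of $\Pi^{\downarrow}_{(w,t)}$ in $t$ yields (P2) a.s. Together with (P1), (P3), (P4) this settles these two cases.

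The heart of the proof is case (ii), where \eqref{eq:frI_conv} forces $(\frI^{d/2+1}f)(b)=\infty$ (by monotonicity of $\frI^{d/2+1}f$), so infinitely many points sit below every paraboloid with apex height $\ge b$ and (P2) genuinely fails; I would prove admissibility directly. Writing $K_w:=\inf_{(v,h)\in\eta_{f,\gamma}}\pow(w,(v,h))$, the key step is to show that a.s.\ $\tau_R:=\sup_{w\in\BB^d(R)}K_w<b$ for every $R$. Granting this, the only points whose cells can meet $\BB^d(R)$ are those with $\pow(w,(v,h))\le\tau_R$ for some $w\in\BB^d(R)$; on the event $\{\tau_R\le s\}$ with $s<b$ these lie in $\{h\le s,\ \|v\|\le R+\sqrt{s-h}\}$, a set of finite $\Lambda_{f,\gamma}$-measure by \eqref{eq:FICondition}. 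Letting $s\uparrow b$ shows that a.s.\ only finitely many points govern $\cL_d(\eta_{f,\gamma})$ over $\BB^d(R)$, whence the covering is locally finite, each $w$ lies in a cell (the infimum $K_w$ is attained), and by (P1) the cells are bounded polytopes (cf.\ \cite{Sch93}). To prove $\tau_R<b$ I would run a net argument exploiting the precise rate in \eqref{eq:frI_conv}. Restricting to points of height $h\ge b-1$: if such a point lies below $\Pi^{\downarrow}_{(w_i,b-2/n)}$, then $\|v-w_i\|^2\le b-2/n-h\le 1$, so it is spatially $O(1)$-close to $w_i$, and for every $w\in w_i+\BB^d(\delta_n)$ one gets $\pow(w,(v,h))\le (b-2/n)+2\delta_n+\delta_n^2\le b-1/n$ once $\delta_n\asymp 1/n$. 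Covering $\BB^d(R)$ by $O((Rn)^d)$ such balls, and noting that the expected number of height-$\ge b-1$ points below $\Pi^{\downarrow}_{(w_i,b-2/n)}$ equals $\gamma\pi^{d/2}(\frI^{d/2+1}f)(b-2/n)$ minus a finite constant (the contribution of heights $h<b-1$ is finite by \eqref{eq:FICondition}), hence is $\ge c\,n^\varepsilon-C$ by \eqref{eq:frI_conv}, a union bound gives
\[
    \PP\big(\tau_R>b-1/n\big)\le O\big((Rn)^d\big)\exp\big(-c\,n^\varepsilon+C\big)\xrightarrow[n\to\infty]{}0 ,
\]
and Borel--Cantelli yields $\tau_R<b$ a.s. The conditions (P3)--(P4) then give normality.

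The step I expect to be the main obstacle is exactly this uniform covering bound $\tau_R<b$ in case (ii). The pointwise fact $K_w<b$ (immediate from $(\frI^{d/2+1}f)(b)=\infty$) does not upgrade to a statement uniform in $w$ by a naive discretization: a point lying below $\Pi^{\downarrow}_{(w_i,t)}$ may have a spatial coordinate far from $w_i$ when its height is very negative, and then fails to lie below $\Pi^{\downarrow}_{(w,t)}$ for neighbouring $w$, so the required number of net points would explode. The truncation to heights $h\ge b-1$ is what circumvents this, confining the relevant nuclei to a bounded spatial window around each $w_i$ while only discarding a finite expected mass, thereby preserving the polynomial growth rate $n^\varepsilon$ that the union bound must beat. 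Verifying that this rate survives the truncation—equivalently, that the divergence of $\frI^{d/2+1}f$ near $b$ stems from heights close to $b$ rather than from the far tail—is the technical crux, and is precisely the point at which \eqref{eq:frI_conv} (a quantitative rate, not mere divergence) is indispensable.
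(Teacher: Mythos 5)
Your proposal is correct and follows essentially the same strategy as the paper: generic verification of (P1), (P3), (P4); a finite-mass bound for paraboloid hypographs giving (P2) in cases (i) and (iii); and, in case (ii), reduction to the uniform event $\sup_{w\in\BB^d(R)}K_w<b$, proved by a net argument with a union bound and Borel--Cantelli that exploits the polynomial rate in \eqref{eq:frI_conv}. The only differences are implementational: where you truncate to heights $h\ge b-1$ so as to use $O(1/n)$-balls (relying on the finiteness of $\int_{-\infty}^{b-1}f(h)(b-h)^{d/2}\,\dd h$, which the paper records in Remark \ref{rem:NotLocInt}), the paper instead shrinks the paraboloids radially and uses $O(n^{-1/2})$-balls, both routes yielding the same summable bound of order $n^{O(d)}e^{-cn^{\varepsilon}}$.
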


We abuse the notation in Section \ref{sec:Ltessel} slightly and will call functions $f\in L_{\rm loc}^{1,+}(E)$ satisfying the assumptions of Theorem \ref{thm:FICondition} \textit{admissible}.
\begin{definition}\label{def:admissible}
    Let $E$ be the interval of type (i), (ii) or (iii). We call a function $f\in L_{\rm loc}^{1,+}(E)$ \textit{admissible} if it satisfies the following conditions:
    \begin{enumerate}
        \item[(F1)] $(\frI^{\frac{d}{2}+1}f)(t) <\infty$ for all $t\in E$, \label{item:F1}
        \item[(F2)] in the case of an interval of type (ii) there exists $\varepsilon > 0$ and $n_0\in \NN$ such that for all $n\ge n_0$ it holds \[(\frI^{{d\over 2}+1}f)(b-1/n) \ge n^\varepsilon,\]\label{item:F2}
    \end{enumerate}
    where $\frI^{{d\over 2}+1}f$ is defined by \eqref{eq:Frac_Int}.    
\end{definition}

\begin{remark}\label{rem:NotLocInt}
    In Remark \ref{rmk:intervals} we argued that in the case of an interval E of type (ii) we assume that the function $f:(-\infty,b)\to \RR_+$ is not locally integrable on $(-\infty,b]$. Note that \hyperref[item:F1]{(F1)} and \hyperref[item:F2]{(F2)} imply $f\not\in L_{\rm loc}^{1,+}((-\infty,b])$. Indeed, from \hyperref[item:F2]{(F2)} it follows that
    \[
        \int_{b-1}^bf(t)(b-t)^{d\over 2}\dint t+\int_{-\infty}^{b-1}f(t)(b-t)^{d\over 2}\dint t=\int_{-\infty}^bf(t)(b-t)^{d\over 2}\dint t=\infty.
    \]
    Using ${b-t\over b-1-t}\leq 2$ which holds for any $t\leq b-2$ we note
    \begin{align*}
        \int_{-\infty}^{b-1}f(t)(b-t)^{d\over 2}\dint t&= \int_{b-2}^{b-1}f(t)(b-t)^{d\over 2}\dint t+\int_{-\infty}^{b-2}f(t)(b-t)^{d\over 2}\dint t\\
        &\leq 2^{d\over 2}\int_{b-2}^{b-1}f(t)\dint t+2^{d\over 2}\int_{-\infty}^{b-2}f(t)(b-1-t)^{d\over 2}\dint t\\
        & \leq 2^{d\over 2}\int_{b-2}^{b-1}f(t)\dint t +2^{d\over 2}\Gamma\Big({d\over 2}+1\Big)(\frI^{{d\over 2}+1}f)(b-1)<\infty,
    \end{align*}
    due to \hyperref[item:F1]{(F1)} and since $f\in L_{\rm loc}^{1,+}((-\infty,b))$. Therefore we have
    \[
        \int_{b-1}^bf(t)\dint t\ge \int_{b-1}^bf(t)(b-t)^{d\over 2}\dint t = \infty
    \]
    and $f$ is not locally integrable on $(-\infty,b]$.
\end{remark}

From now on let $\cL_{d,\gamma}(f)$ denote the normal Laguerre tessellation constructed for the Poisson point process $\eta_{f,\gamma}\sim \PPP(\gamma f(h)\dd v\dd h)$ on $\RR^d \times E$ with $f$ being admissible and let $\cL^*_{d,\gamma}(f)$ be its dual. Let us point out that condition \hyperref[item:F1]{(F1)} is not very restrictive and holds for big classes of functions. In the next proposition we collected some of them.

\begin{proposition}\label{prop:FICondition2}
    Let $d\ge 1$ and let $f\colon \inter E \to \RR_+$ be a non-negative measurable function, satisfying one of the following conditions:
    \begin{enumerate}[label=(\alph*)]
        \item[(a)] $E=[a,\infty)$, $a\in\RR$ and $f\in L^{1,+}_{\rm loc}(E)$;
        \item[(b)] $E=(-\infty,b)$, $b\in\RR$ and $u^{-{d\over 2}-2}(f\circ \tau_b)\big(-1/u\big)\in L_{\rm loc}^{1,+}([0,\infty))$;
    \end{enumerate}
    Then $f$ satisfies \hyperref[item:F1]{(F1)}.
\end{proposition}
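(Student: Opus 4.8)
The plan is to verify condition \hyperref[item:F1]{(F1)}, namely $(\frI^{\frac{d}{2}+1}f)(t)<\infty$ for every $t\in E$, directly from the integral representation \eqref{eq:Frac_Int}, exploiting the fact that the order $\alpha:=\frac{d}{2}+1$ satisfies $\alpha\ge 1$ whenever $d\ge 1$. The key elementary observation is that for $\alpha\ge 1$ the kernel $(t-s)^{\alpha-1}$ is monotone in $s$ and hence uniformly bounded on the relevant compact range of integration, so that local integrability of the integrand suffices to bound the fractional integral \emph{pointwise} (and not merely almost everywhere, as the general Martinez-type result recalled in Section \ref{sec:FractionalIntegrals} would give).

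First I would treat case (a). For $E=[a,\infty)$ and $t>a$, the convention of Remark \ref{rem:convention} gives $(\frI^{\alpha}f)(t)=(\frI_{a+}^{\alpha}f)(t)$. Since $\alpha-1=\frac{d}{2}\ge 0$, for $s\in[a,t]$ we have $(t-s)^{d/2}\le(t-a)^{d/2}$, whence
\[
    (\frI^{\frac{d}{2}+1}f)(t)\le\frac{(t-a)^{d/2}}{\Gamma(\frac{d}{2}+1)}\int_a^t f(s)\dint s<\infty,
\]
because $f\in L_{\rm loc}^{1,+}([a,\infty))$ and $[a,t]$ is compact; for $t=a$ the integral vanishes by the convention $f\equiv 0$ outside $(a,\infty)$. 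This disposes of case (a).

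For case (b) I would reduce to the situation of case (a) by means of the two identities for fractional integrals recorded in Section \ref{sec:FractionalIntegrals}. Writing $g:=f\circ\tau_b$, a non-negative function on $(-\infty,0)$, the shift identity \eqref{eq:FractionalIntegralShift} (with $c=b$) gives $(\frI^{\alpha}f)(t)=(\frI^{\alpha}g)(t-b)$, and since $t<b$ the argument $x:=t-b$ is negative. Applying \eqref{eq:frac_f_g} to $g$ then yields
\[
    (\frI^{\frac{d}{2}+1}f)(t)=(b-t)^{d/2}\,\big(\frI_{0+}^{\frac{d}{2}+1}g_{\alpha}\big)\big(\tfrac{1}{b-t}\big),\qquad g_{\alpha}(u)=u^{-\frac{d}{2}-2}(f\circ\tau_b)(-1/u).
\]
The function $g_{\alpha}$ is precisely the one appearing in hypothesis (b), so $g_{\alpha}\in L_{\rm loc}^{1,+}([0,\infty))$. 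Since again $\alpha\ge 1$, the same pointwise bound as in case (a) (now for the Riemann-Liouville integral based at $0$) shows $(\frI_{0+}^{\frac{d}{2}+1}g_{\alpha})(y)<\infty$ for every $y>0$, in particular at $y=\frac{1}{b-t}$; multiplying by the finite factor $(b-t)^{d/2}$ gives \hyperref[item:F1]{(F1)}.

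I do not expect any genuine obstacle here: the argument is a bookkeeping exercise in unwinding the substitution $t=-1/u$ that underlies \eqref{eq:frac_f_g} and in noting that the order $\frac{d}{2}+1$ is at least one. The only point that requires a little care is to confirm that the conclusion holds for \emph{all} $t\in E$ rather than for almost every $t$, which is exactly what the monotonicity of the kernel for $\alpha\ge 1$ buys us.
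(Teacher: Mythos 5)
Your proof is correct, and structurally it follows the same route as the paper: case (a) is handled by local integrability of $f$ on $[a,\infty)$, and case (b) is reduced to case (a) through the change of variables underlying \eqref{eq:frac_f_g} together with the shift identity \eqref{eq:FractionalIntegralShift}. The one genuine difference is in how case (a) is closed. The paper simply cites \cite[Lemma 2.1]{MARTINEZ1992111}, which — as quoted in Section \ref{sec:FractionalIntegrals} — only guarantees $(\frI_{a+}^{\alpha}f)(x)<\infty$ for \emph{almost every} $x>a$, whereas \hyperref[item:F1]{(F1)} demands finiteness at \emph{every} $t\in E$. You replace that citation by the elementary pointwise bound
\[
    (\frI^{\frac{d}{2}+1}f)(t)\le\frac{(t-a)^{d/2}}{\Gamma(\tfrac{d}{2}+1)}\int_a^t f(s)\,\dint s,
\]
valid because the order $\frac{d}{2}+1$ is at least $1$, so the kernel $(t-s)^{d/2}$ is bounded on $[a,t]$. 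This is exactly the observation needed to upgrade ``almost every'' to ``every,'' and it is the same mechanism the paper itself uses elsewhere (in Lemma \ref{lm:technical}). So your argument is marginally more self-contained and, strictly speaking, more careful than the paper's one-line proof; the unwinding of case (b), including the identification of $g_{\alpha}(u)=u^{-\frac{d}{2}-2}(f\circ\tau_b)(-1/u)$ with the function in hypothesis (b) and the finiteness of the prefactor $(b-t)^{d/2}$, is exactly what the paper intends.
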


\begin{remark}
    In particular (a) implies that when $f$ is a density of some locally finite measure on the interval of type (i) with respect to the Lebesgue measure, it satisfies \hyperref[item:F1]{(F1)} and, hence, is admissible.
\end{remark}

\begin{proof}
    (a) is a direct consequence of \cite[Lemma 2.1]{MARTINEZ1992111} in combination with \eqref{eq:FractionalIntegralShift}, while (b) follows from \eqref{eq:frac_f_g} and (a).
\end{proof}

It should be noted that random tessellations $\cL_{d,\gamma}(f)$ and their dual models $\cL^*_{d,\gamma}(f)$ have been studied before for some special types of functions. Below we list some examples of admissible functions including the cases considered previously in the literature.

\begin{example}[Independently marked homogeneous Poisson point process]\label{ex:IndQMarking}

In \cite{LZ08,Ldoc} the authors considered the Laguerre tessellation $\cL_d(\xi)$, where $\xi$ is an independent $\QQ$-marking of a homogeneous Poisson point process in $\RR^d$ with intensity $\gamma$ and $\QQ$ is a probability measure on $(-\infty,0]$. Let us point out that in \cite{LZ08,Ldoc} another definition of a power function, namely $\pow(w,(v,r))=\|v-w\|^2-r^2$, is used. It coincides with our definition by setting $h=-r^2$, which should be taken into account while comparing the corresponding results. In \cite[Theorem 4.1]{LZ08} it was shown that $\cL_d(\xi)$ is a random tessellation if and only if
\begin{equation}\label{eq:ConditionQ}
    \EE[(-R)^{d\over 2}]<\infty,
\end{equation}
where $R$ is a random variable with distribution $\QQ$.

Further note that an independent $\QQ$-marking $\xi$ of a Poisson point process is itself a Poisson point process on the product space $\RR^d\times (-\infty,0]$ (see \cite[Theorem 3.5.7]{SW} or \cite[Proposition 6.16]{LP}). Then if $\QQ$ is absolutely continuous with respect to the Lebesgue measure it has a density $q:\RR\to\RR_+$ satisfying $q(x)=0$ for all $x>0$ and $q\in L^1(\RR)$. In this case we have $\xi\overset{d}{=}\eta_{q,\gamma}$ and $\cL_d(\xi)\overset {d}{=}\cL_{d,\gamma}(q)$. According to Definition \ref{def:admissible} $q$ is admissible if 
\begin{equation}\label{eq:ConditionQ2}
    (\frI^{{d\over 2}+1}q)(p)=\Gamma\Big({d\over 2}+1\Big)\int_{-\infty}^pq(h)(p-h)^{d\over 2}\dint h<\infty,
\end{equation}
for all $p\in \RR$. Note that since the fractional integral is monotone in $p$ for any $p\leq 0$ we have
\[
(\frI^{{d\over 2}+1}q)(p)\leq \Gamma\Big({d\over 2}+1\Big)\int_{-\infty}^0q(h)(-h)^{d\over 2}\dint h=\Gamma\Big({d\over 2}+1\Big)\EE[(-R)^{d\over 2}].
\]
On the other hand for any $p>0$, $h<0$ and $d\ge 2$ using Jensen's inequality we get 
\[
    (p-h)^{\frac{d}{2}}=2^{\frac{d}{2}}\Big(\frac{1}{2}p+\frac{1}{2}(-h)\Big)^{\frac{d}{2}}\le 2^{\frac{d}{2}-1}(p^{\frac{d}{2}}+(-h)^{\frac{d}{2}})<2^{d}(p^{\frac{d}{2}}+(-h)^{\frac{d}{2}}).
\]
At the same time for $d=1$ we use $\sqrt{p+(-h)}\leq \sqrt{p}+\sqrt{-h}$. Hence,
\[
    \int_{-\infty}^p(p-h)^{d\over 2}q(h)\dint h\leq 2^{d} \left(p^{d\over 2}\int_{\RR}q(h)\dint h+\int_{-\infty}^0(-h)^{\frac{d}{2}}q(h)\dint h\right)=2^{d}p^{d\over 2}+2^{d}\EE[(-R)^{d\over 2}],
\]
since $q$ is a density of some probability measure. Thus \eqref{eq:ConditionQ2} is equivalent to \eqref{eq:ConditionQ} in this case.

The advantage of our approach is that we could also treat the situation, when the mark distribution is supported on whole $\RR$, but on the other hand we can only consider mark distributions, which are absolutely continuous with respect to the Lebesgue measure.
\end{example}

\begin{example}[$\beta$-, $\beta'$- and Gaussian-Voronoi tessellations]\label{ex:BetaModels}
In \cite{GKT20, GKT21} functions which are not integrable have been considered, in particular, the following three families of functions:
\begin{align}
    &f_{d,\beta} \colon (0,\infty)\to \RR_+, &&f_{d,\beta}(h)=c_{d+1,\beta}h^{\beta},&&\beta>-1,\, c_{d+1,\beta}:=\frac{\Gamma\left(\frac{d+1}{2}+\beta+1\right)}{\pi^{\frac{d+1}{2}}\Gamma(\beta+1)},\label{eq:BetaModel}\\
    &f'_{d,\beta}\colon(-\infty,0)\to \RR_+ , &&f'_{d,\beta}(h)=c'_{d+1,\beta}(-h)^{-\beta},&&\beta >\frac{d}2 + 1,\, c'_{d+1,\beta}:=\frac{\Gamma\left(\beta\right)}{\pi^{\frac{d+1}{2}}\Gamma(\beta-{\frac{d+1}{2}})},\label{eq:BetaPrimeModel}\\
    &\widetilde f_{\lambda}\colon \RR \to \RR_+, &&\widetilde f_{\lambda}(h)=e^{\lambda h},&&\lambda>0.\label{eq:GaussianModel}
\end{align}
In what follows we will call these three settings $\beta$-model, $\beta'$-model and Gaussian-model, respectively. It was shown in \cite[Lemma 3]{GKT20} and in \cite[Section 3.3]{GKT21} that the corresponding Laguerre diagrams $\cV_{d,\beta,\gamma}:=\cL_{d,\gamma}(f_{d,\beta})$, $\cV'_{d,\beta,\gamma}:=\cL_{d,\gamma}(f'_{d,\beta})$ and $\cG_{d,\lambda,\gamma}:=\cL_{d,\gamma}(\widetilde f_{\lambda})$ are stationary normal random tessellations in $\RR^d$, which are called \textit{$\beta$-Voronoi}, \textit{$\beta'$-Voronoi} and \textit{Gaussian-Voronoi tessellations}, respectively. It should be pointed out though that \cite[Lemma 1]{GKT20}, which was used in the proof of \cite[Lemma 3]{GKT20}, contains a gap, such that the proof in the case of the $\beta'$-model is incomplete. We close this gap in Theorem \ref{thm:FICondition}. Let us ensure that functions defined as in \eqref{eq:BetaModel}, \eqref{eq:BetaPrimeModel} and \eqref{eq:GaussianModel} are admissible and Theorem \ref{thm:FICondition} applies in this three cases.

\begin{figure}[t]
\centering
\includegraphics[width=0.3\columnwidth]{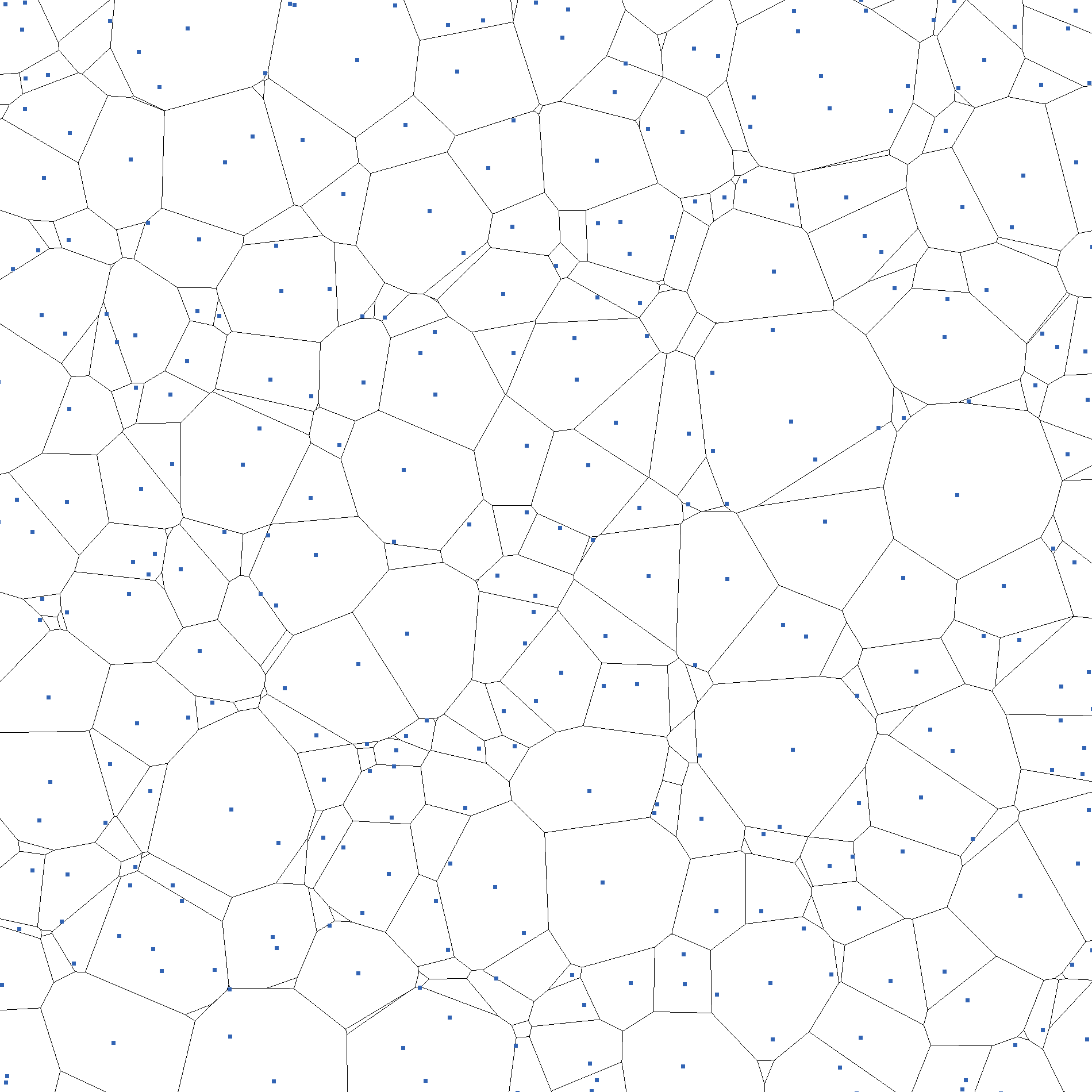}
\quad
\includegraphics[width=0.3\columnwidth]{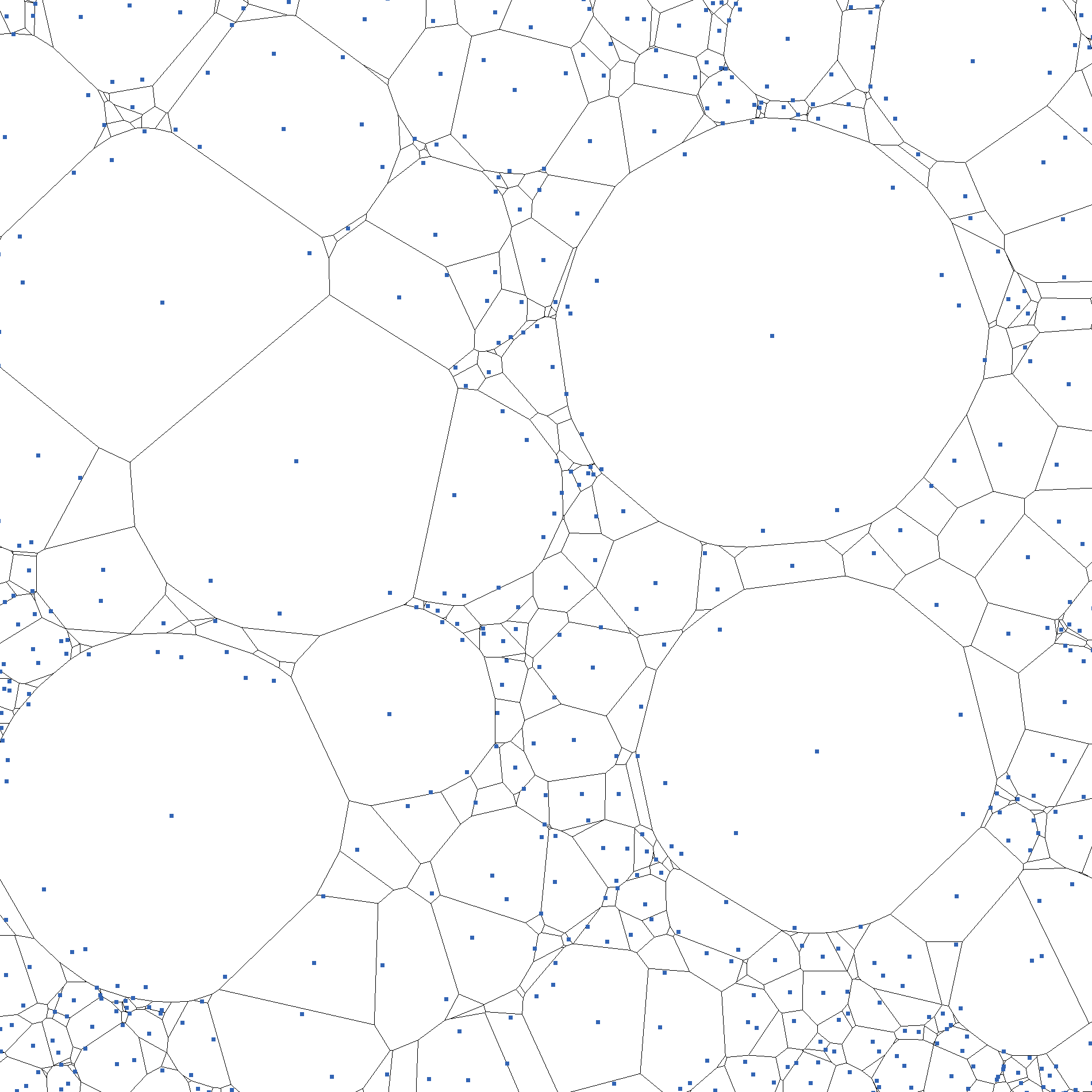}
\quad
\includegraphics[width=0.3\columnwidth]{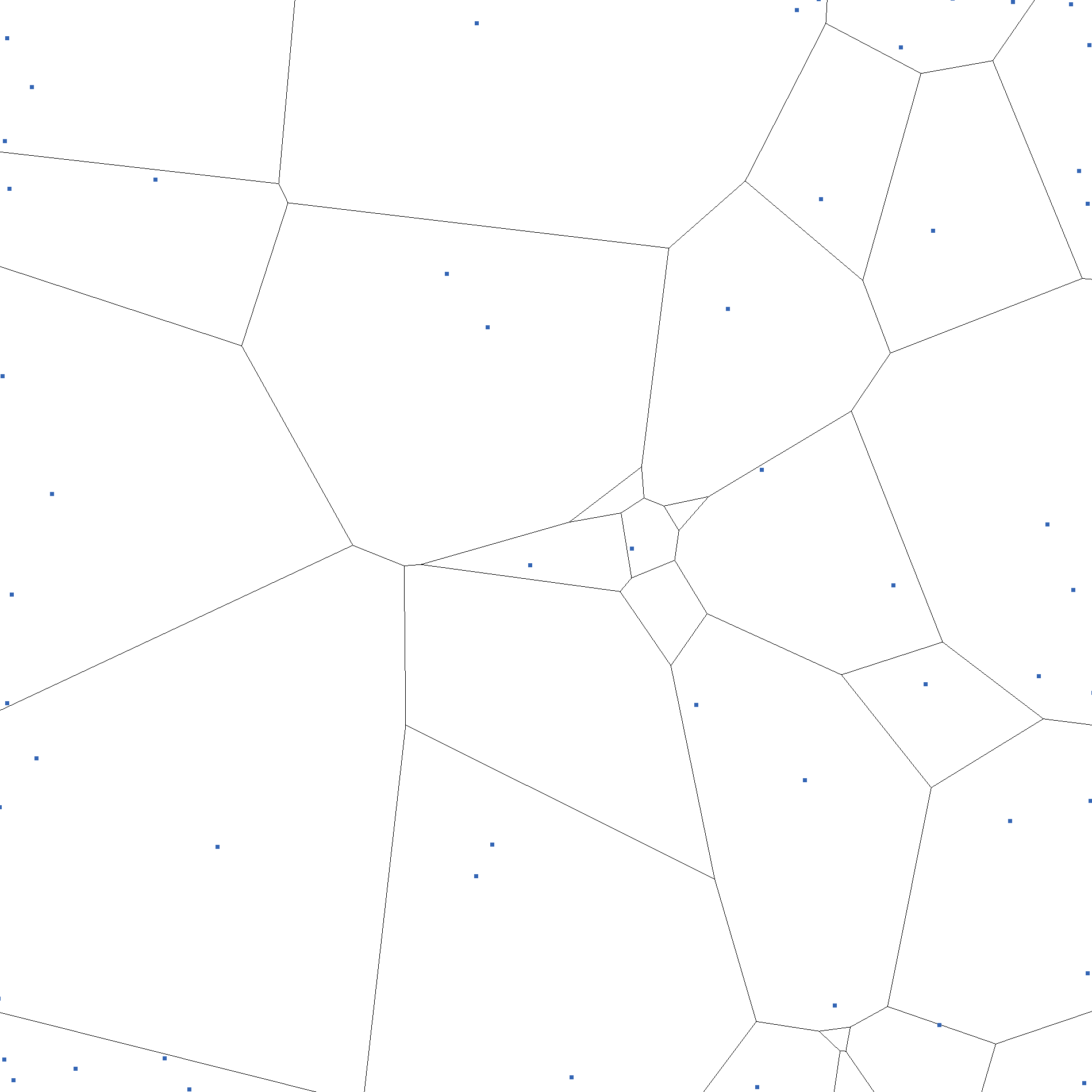}
\caption{Left: $\beta$-Voronoi tessellation in $\RR^2$ with $\beta=5$ (interval $E$ of type (i)) \cite{sectional}. Middle: $\beta'$-Voronoi tessellation in $\RR^2$ with $\beta=2.5$ (interval $E$ of type (ii)) \cite{sectional}. Right: Gaussian-Voronoi tessellation in $\RR^2$ (interval $E$ of type (iii)) \cite{sectional}.} \label{fig:BetaModels}
\end{figure}

\textbf{$\beta$-model:} In the case of the $\beta$-model it is easy to ensure that $f_{d,\beta} \in L_{\rm loc}^{1,+}([0,\infty))$ for $\beta>-1$ and, hence, by Proposition \ref{prop:FICondition2} the condition \hyperref[item:F1]{(F1)} of Definition \ref{def:admissible} holds and $f_{d,\beta}$ is admissible. It should be noted that one may additionally include the case $\beta=-1$ in the family of $\beta$-Voronoi tessellations by defining $\cV_{d,-1,\gamma}$ to be a classical Poisson-Voronoi tessellation and interpret the latter as a Laguerre tessellation $\cL_{d,\gamma}(f)$ with $f(h)=\pi^{d+1\over 2}\Gamma({d+1\over 2})^{-1}\delta_0(h)$, where $\delta_0$ is the Dirac delta function at $0$. Formally we do not allow such functions in the definition of Poisson-Laguerre tessellations, but there are strong evidences, that the Poisson-Voronoi tessellation indeed appears as a limit of $\cV_{d,\beta,\gamma}$ as $\beta \to -1$ (see \cite[Remark 6] {GKT20}). 

\medskip

\textbf{$\beta'$-model:} First note that
\[
    u^{-{d\over 2}-2}f'_{d,\beta}\big(-{1\over u}\big)=c'_{d+1,\beta}u^{\beta-{d\over 2}-2}\in L_{\rm loc}^{1,+}([0,\infty)),
\]
for $\beta>{d\over 2}+1$. Hence, by Proposition \ref{prop:FICondition2} the function $f'_{d,\beta}$ satisfies \hyperref[item:F1]{(F1)}. Further using \cite[Property 2.5 (b)]{TAFDE} we have that there exists $\varepsilon>0$ and $n_0\in \NN$ such that
\[
    (\frI^{\frac{d}{2}+1}f_{d,\beta}')(-1/n) = \frac{c_{d+1,\beta}'\Gamma(\beta-\frac{d}{2}-1)}{\Gamma(\beta)}n^{\beta-\frac{d}{2}-1}\ge n^\varepsilon
\]
for all $n\ge n_0$ since $\beta > \frac{d}{2}+1$ and therefore $f'_{d,\beta}$ satisfies \hyperref[item:F2]{(F2)} and is admissible. 

\medskip

\textbf{Gaussian-model:}  By direct computations one can show that $\widetilde f_{\lambda}$ satisfies \hyperref[item:F1]{(F1)} as well and, hence, is admissbile by Definition \ref{def:admissible}.
\end{example}

\medskip

The above examples demonstrate that Theorem \ref{thm:FICondition} provides a sufficient condition for $\cL_{d,\gamma}(f)$ being a normal random tessellation, which is often easy to verify and it allows us to treat the models considered in \cite{Ldoc,GKT20, GKT21} in a unified way. 

\begin{example}[Further examples of admissible functions]\label{ex:FirstExample}

Below we present a few more examples of admissible functions $f$ leading to a normal random tessellation $\cL_{d,\gamma}(f)$.

\begin{enumerate}
    \item $f\colon [1,\infty)\to \RR_+$, $f(h)=h^\beta (\log(h))^{\alpha}$, where $\beta>-1$, $\alpha\in \RR$.

    \medskip

    Let $\varepsilon ={\beta+1\over 2}>0$. For $\alpha\ge 0$ we note that $f(h)\leq \big({\alpha e\over \varepsilon}\big)^{\alpha}h^{\beta+\varepsilon}$. Since  $h^{\beta+\varepsilon}$ is locally integrable we have $f\in L^{1,+}_{\rm loc}([1,\infty))$ and by Proposition \ref{prop:FICondition2} satisfies \hyperref[item:F1]{(F1)}. In the case $\alpha<0$ we note that $f(h)\leq \big(-{\alpha e\over \varepsilon}\big)^{\alpha}h^{\beta-\varepsilon}$ and, hence, $f\in L^{1,+}_{\rm loc}([1,\infty))$ as well.
    
    \item $f\colon (0,\infty)\to \RR_+$, $f(h)=\alpha_1h^{\beta_1}+\ldots +\alpha_k h^{\beta_k}$, $k\in\NN$, where $\beta_1, \ldots, \beta_k >-1$, $\alpha_1,\ldots,\alpha_k>0$.

    \medskip

    This function is obviously in $L^{1,+}_{\rm loc}([0,\infty))$ as a linear combination of locally integrable and non-negative functions on $(0,\infty)$ and, hence, by Proposition \ref{prop:FICondition2} it satisfies \hyperref[item:F1]{(F1)}.
    
    \item $f\colon (0,\infty)\to \RR_+$, $f(h)=h^\beta \log(h)+(e\beta)^{-1}$, where $\beta>0$ and we set $f(0):=\lim_{h\to 0^+}f(h)=0$. 
\end{enumerate}
\end{example}

\begin{remark}
    An interesting question would be to study the influence of the choice of the interval $E$ on the resulting tessellation $\cL_{d,\gamma}(f)$. Looking at the simulations (see Figure \ref{fig:BetaModels}) one may conjecture that in the case of an interval of type (i) the resulting tessellation behaves similar to the classical Poisson-Voronoi tessellation, while the interval of type (ii) leads to a more "irregular" structure.
\end{remark}

The next proposition provides information about the behavior of the tessellation $\cL_{d,\gamma}(f)$ after scaling and shifting the Poisson point process $\eta_{f,\gamma}$ along the height coordinate $h$. Given a tessellation $T$ and $s>0$ we denote by $sT:=\{st:t\in T\}$.

\begin{proposition}\label{prop:compwlin}
    Let $\gamma\in (0,\infty)$ and let $f$ be admissible. For a function  
    $\varphi\colon \overline{\RR}\to \overline{\RR}$, $\varphi(x)=\lambda x+c$, where $\lambda>0$ and $c\in \RR$ we have that $f \circ \varphi\in L_{\rm loc}^{1,+}(\varphi^{-1}(E))$ is admissible and 
    \[
        \cL_{d,\gamma}(f)\overset{d}{=}\lambda^{\frac{1}{2}}\cL_{d,\widetilde{\gamma}}(f\circ \varphi),
    \]
    where $\widetilde{\gamma}:=\lambda^{\frac{d}{2}+1}\gamma\in(0,\infty)$. 
\end{proposition}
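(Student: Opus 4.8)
The plan is to exhibit a single measurable transformation $\Psi$ of $\RR^{d+1}$ that simultaneously turns the driving process of $\cL_{d,\widetilde\gamma}(f\circ\varphi)$ into that of $\cL_{d,\gamma}(f)$ in distribution and acts on the Laguerre diagram exactly by the scaling $w\mapsto\lambda^{1/2}w$. Concretely I set $\Psi(v,h):=(\lambda^{1/2}v,\,\lambda h+c)=(\lambda^{1/2}v,\varphi(h))$, which factors as $\Psi=H_c\circ S$ with $S(v,h)=(\lambda^{1/2}v,\lambda h)$ and $H_c(v,h)=(v,h+c)$. Before invoking this, though, I must check that $\cL_{d,\widetilde\gamma}(f\circ\varphi)$ is even defined, i.e.\ that $f\circ\varphi$ is admissible in the sense of Definition \ref{def:admissible}.

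For admissibility I would first record the scaling–shift rule for the Liouville fractional integral: substituting $s=\varphi(t)$ in \eqref{eq:Frac_Int} gives, for every $\alpha>0$,
\[
    \big(\frI^{\alpha}(f\circ\varphi)\big)(x)=\lambda^{-\alpha}\,\big(\frI^{\alpha}f\big)(\varphi(x)),
\]
which also absorbs the shift formula \eqref{eq:FractionalIntegralShift}. Since $\varphi$ is an increasing affine bijection, $\varphi^{-1}(E)$ has the same type (i)/(ii)/(iii) as $E$, and local integrability of $f\circ\varphi$ on $\varphi^{-1}(E)$ follows by a change of variables. Condition (F1) for $f\circ\varphi$ is then immediate from the displayed identity with $\alpha=\frac d2+1$ and (F1) for $f$, because $\varphi(t)\in E$ whenever $t\in\varphi^{-1}(E)$. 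The only genuinely fussy point is (F2) in the type-(ii) case $E=(-\infty,b)$, where $\varphi^{-1}(E)=(-\infty,b')$ with $b'=\varphi^{-1}(b)$: the identity gives $(\frI^{\frac d2+1}(f\circ\varphi))(b'-1/n)=\lambda^{-(\frac d2+1)}(\frI^{\frac d2+1}f)(b-\lambda/n)$, and I must lower-bound the right-hand side by a power of $n$. Exploiting that $\frI^{\frac d2+1}f$ is monotonically increasing (as $\frac d2+1\ge 1$ and $f\ge 0$), I choose the integer $m_n=\lfloor n/\lambda\rfloor$, so that $b-1/m_n\le b-\lambda/n$ and hence $(\frI^{\frac d2+1}f)(b-\lambda/n)\ge(\frI^{\frac d2+1}f)(b-1/m_n)\ge m_n^{\varepsilon}$ by (F2) for $f$; since $m_n\ge n/(2\lambda)$ for large $n$, this yields a bound of the form $c\,n^{\varepsilon}$, and any $\varepsilon'\in(0,\varepsilon)$ works for $n$ large. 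This shows $f\circ\varphi$ is admissible.

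For the distributional identity I would argue at the level of configurations and transport via the mapping theorem. On any configuration $A\subset\RR^d\times\RR$ the shift $H_c$ changes every power value by the same additive constant $c$, hence preserves all comparisons and $\cL_d(H_cA)=\cL_d(A)$; and $S$ satisfies $\pow(\lambda^{1/2}w,S(v,h))=\lambda\,\pow(w,(v,h))$, so the defining inequalities of each cell are preserved and $C(S(v,h),SA)=\lambda^{1/2}C((v,h),A)$, giving $\cL_d(SA)=\lambda^{1/2}\cL_d(A)$ and therefore $\cL_d(\Psi A)=\lambda^{1/2}\cL_d(A)$ for every $A$. On the other hand a change of variables shows
\[
    \Psi_{*}\Lambda_{f\circ\varphi,\widetilde\gamma}=\widetilde\gamma\,\lambda^{-(\frac d2+1)}f(h)\,\dint h\,\dint v=\Lambda_{f,\gamma}\quad\text{on }\RR^d\times E,
\]
using $\widetilde\gamma=\lambda^{\frac d2+1}\gamma$. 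By the mapping theorem for Poisson point processes, $\Psi(\eta_{f\circ\varphi,\widetilde\gamma})$ is then a Poisson process with the same intensity as $\eta_{f,\gamma}$, so $\Psi(\eta_{f\circ\varphi,\widetilde\gamma})\overset{d}{=}\eta_{f,\gamma}$. Applying the measurable map $A\mapsto\cL_d(A)$ and the deterministic identity above gives $\cL_{d,\gamma}(f)\overset{d}{=}\cL_d(\Psi(\eta_{f\circ\varphi,\widetilde\gamma}))=\lambda^{1/2}\cL_{d,\widetilde\gamma}(f\circ\varphi)$.

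I expect the main obstacle to be the verification of (F2) for $f\circ\varphi$: the scaling by $\lambda$ appears \emph{inside} the fractional integral at the point $b-\lambda/n$ rather than at $b-1/n$, so the polynomial lower bound of (F2) cannot be transferred verbatim and must be recovered through monotonicity of $\frI^{\frac d2+1}f$ together with the integer truncation $m_n=\lfloor n/\lambda\rfloor$. Everything else—local integrability, (F1), and the geometric scaling/shift invariance of the Laguerre diagram—is routine once the fractional-integral scaling rule and the factorization $\Psi=H_c\circ S$ are in place.
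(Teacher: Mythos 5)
Your proposal is correct and follows essentially the same route as the paper: the fractional-integral scaling identity $(\frI^{\alpha}(f\circ\varphi))(x)=\lambda^{-\alpha}(\frI^{\alpha}f)(\varphi(x))$ together with monotonicity and the integer truncation $\lfloor n/\lambda\rfloor$ is exactly how the paper verifies (F1) and (F2), and the distributional identity is obtained there too by combining a height shift (which leaves all power comparisons, hence the diagram, unchanged) with the scaling $(v,h)\mapsto(\lambda^{-1/2}v,\lambda^{-1}h)$ and the mapping theorem for Poisson processes. The only cosmetic difference is that you package the two steps into one affine map $\Psi=H_c\circ S$ and transport $\eta_{f\circ\varphi,\widetilde\gamma}$ to $\eta_{f,\gamma}$ rather than the reverse.
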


\begin{remark}
    Proposition \ref{prop:compwlin} in particular allows us to restrict our consideration to the following cases $E=[0,\infty)$, $E=(-\infty, 0)$ and $E=\RR$. 
\end{remark}

We finalize this subsection with the following proposition, which is a technical tool we will use in Section \ref{sec:sectional}.

\begin{proposition}\label{prop:trunc}
    Let $E$ be an interval of type (ii). Let $\gamma\in (0,\infty)$ and let $f$ be admissible. We define a generalized function $g\colon\RR\to\overline{\RR}_+$ by 
    \[
        g(p):=\begin{cases}
            f(p),\qquad&\text{ if }p<b,\\
            \infty,\qquad&\text{ otherwise}.
        \end{cases}
    \]
    Let $\widetilde{\eta}$ be a Poisson point process in $\RR^d \times \RR$ whose intensity measure is given by $\gamma g(h) \dd v\dd h$. Then $\cL_{d}(\widetilde{\eta})$ is almost surely a normal random tessellation and
    \[
        \cL_{d}(\widetilde{\eta}) \overset{d}{=} \cL_{d}(\eta_{f,\gamma}).
    \]
\end{proposition}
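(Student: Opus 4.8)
The plan is to realise $\widetilde\eta$ as the superposition of the ordinary Poisson process $\eta_{f,\gamma}$ on $\RR^d\times(-\infty,b)$ (which is well defined since $f\in L^{1,+}_{\rm loc}((-\infty,b))$, so the intensity is $\sigma$-finite there) and an independent point process $\eta^+$ supported on $\RR^d\times[b,\infty)$ that accounts for the infinite intensity of $g$; the precise law of $\eta^+$ will turn out to be immaterial. The crucial observation is that any point $(v_0,h_0)$ with $h_0\ge b$ satisfies $\pow(w,(v_0,h_0))=\|w-v_0\|^2+h_0\ge b$ for every $w\in\RR^d$. Hence, if I can show that almost surely
\[
    m(w):=\min_{(v,h)\in\eta_{f,\gamma}}\pow(w,(v,h))<b\qquad\text{for all }w\in\RR^d,
\]
then at every $w$ lying in an $\eta_{f,\gamma}$-cell the winning power $m(w)$ is strictly below $b$, so no point of $\eta^+$ ever attains or ties the minimal power anywhere: the points of $\eta^+$ have empty Laguerre cells and the constraints they add are never binding on the $\eta_{f,\gamma}$-cells. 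Thus $\cL_d(\widetilde\eta)=\cL_d(\eta_{f,\gamma})$ as collections of cells under this coupling, which is a normal random tessellation by Theorem \ref{thm:FICondition} since $f$ is admissible, and in particular $\cL_d(\widetilde\eta)\overset{d}{=}\cL_d(\eta_{f,\gamma})$.

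Everything therefore reduces to the key claim that $m(w)<b$ for all $w$ almost surely, i.e.\ that every downward paraboloid $\Pi_{(w,b)}$ has a point of $\eta_{f,\gamma}$ strictly below it. Here I would use Lemma \ref{lm:IntensityParabola}, which gives $\EE(\eta_{f,\gamma}(\inter\Pi^{\downarrow}_{(w,t)}))=\gamma\pi^{d/2}(\frI^{\frac{d}{2}+1}f)(t)$. By Remark \ref{rem:NotLocInt} the quantity $(\frI^{\frac{d}{2}+1}f)(t)$ increases to $+\infty$ as $t\uparrow b$ (monotone convergence), and condition \hyperref[item:F2]{(F2)} makes this divergence at least polynomial, namely $(\frI^{\frac{d}{2}+1}f)(b-1/n)\ge n^{\varepsilon}$. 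Fixing $R\ge 1$, I would cover $\BB^d(R)$ by a grid of balls of radius $\rho$ and argue that if for each grid centre $q_i$ there is a point of $\eta_{f,\gamma}$ with spatial coordinate in $\BB^d(2R)$ lying strictly below $\Pi_{(q_i,t_0)}$, then $m(w)<b$ for every $w\in\BB^d(R)$: for such a witness $(v^*,h^*)$ and $w\in\BB^d(q_i,\rho)$ a one-line estimate gives $\pow(w,(v^*,h^*))<t_0+7R\rho$, which is $<b$ once $7R\rho\le b-t_0$. Choosing $t_0=b-1/n$ and $\rho\asymp(b-t_0)/R$, so that the grid has $O((b-t_0)^{-d})$ cells, I would bound the probability that some grid centre fails by a union bound, namely by $O((b-t_0)^{-d})\,e^{-(\mu(t_0)-C)}$ with $\mu(t_0)=\gamma\pi^{d/2}(\frI^{\frac{d}{2}+1}f)(t_0)$, and let $n\to\infty$ to conclude that this probability is $0$ for $\BB^d(R)$; a union over $R\in\NN$ then finishes the claim.

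The step I expect to be the main obstacle is that the divergent part of the expected count lives near the \emph{apex} of the paraboloid, which moves with the centre, so restricting the witnesses to the fixed ball $\BB^d(2R)$ must not destroy the divergence. To control this I would split the mass at $\|v-q_i\|=1$: the outer contribution $\gamma\int_{\|v-q_i\|>1}\int_{-\infty}^{b-\|v-q_i\|^2}f(h)\,\dint h\,\dint v$ is independent of $q_i$ by translation invariance, and by passing to polar coordinates and Fubini it is bounded by a constant times $\int_{-\infty}^{b-1}f(h)(b-h)^{d/2}\,\dint h$, which is finite by the computation in Remark \ref{rem:NotLocInt} (using \hyperref[item:F1]{(F1)}). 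Consequently the restricted count over $\BB^d(2R)$ still satisfies $\mu_i(t_0)\ge\mu(t_0)-C$ uniformly in $q_i\in\BB^d(R)$, and the polynomial lower bound from \hyperref[item:F2]{(F2)} makes $e^{-\mu_i(t_0)}$ beat the polynomial grid count $O((b-t_0)^{-d})$ as $t_0\uparrow b$. This is exactly where \hyperref[item:F2]{(F2)} (rather than mere divergence of $(\frI^{\frac{d}{2}+1}f)(t)$) is essential, matching the remark in the introduction that the type (ii) interval requires the subtlest analysis.
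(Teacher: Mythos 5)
Your proof is correct, but it takes a genuinely different route from the paper's. Both arguments start from the same decomposition: by the restriction theorem for Poisson processes, $\widetilde\eta$ restricted to $\RR^d\times(-\infty,b)$ has the law of $\eta_{f,\gamma}$, and of the remaining points only the fact that their heights are $\ge b$ is ever used. From there the paper argues via the dual structure: if the two diagrams differed, some cell of $\cL_d(\eta_{f,\gamma})$ would have to lose one of its vertices $w$, which forces the circumscribed paraboloid through the $d+1$ generators meeting at $w$ to have apex height $p\ge b$; the multivariate Mecke formula combined with $\exp\big(-\gamma\pi^{d/2}(\frI^{d/2+1}f)(p)\big)=0$ for $p\ge b$ then shows this event is null. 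You instead prove the pointwise statement that $m(w)=\min_{(v,h)\in\eta_{f,\gamma}}\pow(w,(v,h))<b$ simultaneously for all $w\in\RR^d$ almost surely, from which the identity of the two diagrams is immediate because every point of $\eta^+$ has power $\ge b$ everywhere. Your covering and union-bound proof of that claim is sound: the split at $\|v-q_i\|=1$ together with the computation in Remark \ref{rem:NotLocInt} correctly shows the divergent mass sits near the apex and the outer contribution is bounded uniformly in $q_i$ and $t_0$, and \hyperref[item:F2]{(F2)} supplies the polynomial rate needed to beat the polynomial grid count. Note, however, that this claim is essentially the event $\mathcal{E}$ already established in the proof of Theorem \ref{thm:FICondition} for intervals of type (ii) (there one even obtains a uniform $q<b$ on $\BB^d$, and stationarity plus a countable cover globalizes it to $\RR^d$), so you could have cited that step instead of re-deriving it. As for what each approach buys: yours is more elementary, avoiding Mecke's formula and the Jacobian of Lemma \ref{lm:jacobian} entirely and giving almost sure equality of the diagrams under the natural coupling by a purely geometric argument; the paper's reuses the already-proved normality of $\cL_d(\eta_{f,\gamma})$ and at the final step only needs the soft fact $(\frI^{d/2+1}f)(b)=\infty$.
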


\begin{remark}
    The above proposition has the following intuitional meaning. If we turn back to the interpretation of a Laguerre tessellation $\cL_d(\eta_{f,\gamma})$ as a result of crystallization process (see Section \ref{sec:ConstructionStandard}) Proposition \ref{prop:trunc} reads as follows. By time $b$ with probability $1$ the crystallization process has covered the whole $\RR^d$ and any crystal which start to grow earliest at time $b$ does not cover any points of $\RR^d$. Hence the cells corresponding to points $(v,h)\in \eta$ with $h>b$ have empty interior and these points do not contribute to the Laguerre diagram and can be discarded.
\end{remark}

\subsection{Proofs}

\begin{proof}[Proof of Theorem \ref{thm:FICondition}]
    As described in Section \ref{sec:LaguerreDiagramProperties} the cells of $\cL_d(\eta_{f,\gamma})$ are almost surely convex, closed, have non-empty disjoint interior and are in face-to-face position. To show that the Laguerre cells are compact almost surely it is sufficient to show that $\eta_{f,\gamma}$ satisfies condition \hyperref[item:P1]{(P1)} almost surely as in the proof of \cite[Proposition~2.2.2]{Ldoc}. In the case when $\gamma\int_E f(h)dh=\infty$ condition \hyperref[item:P1]{(P1)} holds almost surely since the projection of the Poisson point process $\eta_{f,\gamma}$ to the space component $\RR^d$ is with probability one an everywhere dense subset of $\RR^d$. If otherwise $\gamma\int_E f(h)dh=c\in\RR$, then the projection of the Poisson point process $\eta_{f,\gamma}$ has the same distribution as a homogeneous Poisson point process with intensity $c$ according to the mapping theorem for Poisson point processes, see \cite[Theorem 5.1]{LP}. In this case $\conv(v:(v,h)\in \eta_{f,\gamma})$ is a stationary random closed set and hence \hyperref[item:P1]{(P1)} holds as well almost surely by \cite[Theorem 2.4.4]{SW}. By \cite[Lemma~10.1.1]{SW} we get that the Laguerre cells are convex polytopes almost surely.
    
    Next we want to ensure that $\cL_d(\eta_{f,\gamma})$ is almost surely a locally finite covering of $\RR^d$. For this we consider two cases. Let first $E$ be the interval of type (i) or (iii) and we show that condition \hyperref[item:P2]{(P2)} holds almost surely.
    We consider the complement of this event and show that 
    \[
        \PP\big(\exists (w,t) \in \RR^d \times \RR: \eta_{f,\gamma}(\Pi_{(w,t)}^\downarrow) = \infty\big) = 0. 
    \]
    Let $(w,t)\in \RR^d \times \RR$ and let $(x,p)\in \Pi_{(w,t)}^\downarrow$. Then 
    \[
        p\le -\|x-w\|^2+t=-\|x\|^2-\|w\|^2+2 \langle x,w \rangle +t.
    \]
    Note that there exists $c_1(w)\in \RR$ such that $2\langle x,w \rangle \le \frac{1}{2}\|x\|^2+c_1(w)$ for all $x\in \RR^d$. Indeed, taking $c_1(w):=2\|w\|^2$ we have 
    \[
        {1\over 2}\|x\|^2+2\|w\|^2-2\|x\|\|w\|={1\over 2}(\|x\|-2\|w\|)^2\ge 0
    \]
    for any $w,x\in\RR^d$ and since $2\langle x,w\rangle\leq 2\|x\|\|w\|$ the claim follows. Hence 
    \begin{equation}\label{eq:27.05.24_1}
        p\le -\|x\|^2-\|w\|^2+\frac{1}{2}\|x\|^2+c_1(w)+t=-\frac{1}{2}\|x\|^2+c_2(w),
    \end{equation}
    where $c_2(w):=c_1(w)+t-\|w\|^2$ and $(x,p)\in \{(v,h)\in \RR^d \times \RR: h\le -\frac{1}{2}\|v\|^2+c_2(w)\}$. For $n\in \NN$ let $\Pi_n^\downarrow:=\{(v,h)\in \RR^d \times \RR: h\le -\frac{1}{2}\|v\|^2+n\}$. Then by \eqref{eq:27.05.24_1} we have that $(x,p)\in \Pi_{\lceil c_2(w)\rceil}^\downarrow$ and, hence,
    \begin{align*}
        \PP\big(\exists (w,t) \in \RR^d \times \RR: \eta_{f,\gamma}(\Pi_{(w,t)}^\downarrow) = \infty\big)
        &\le \PP\big(\exists n\in \NN: \eta_{f,\gamma}(\Pi_n^\downarrow) = \infty\big) \\
        &\le \sum_{n\in \NN}\PP\big(\eta_{f,\gamma}(\Pi_n^\downarrow) = \infty\big).
    \end{align*}
    Therefore it is sufficient to show that $\PP\big(\eta_{f,\gamma}(\Pi_n^\downarrow) = \infty\big)=0$ for all $n\in \NN$. 
    Since $\eta_{f,\gamma}(\Pi_{n}^\downarrow)$ is Poisson distributed with mean $\EE(\eta_{f,\gamma}(\Pi_{n}^\downarrow))$ the latter is equivalent to $\EE(\eta_{f,\gamma}(\Pi_{n}^\downarrow)) <\infty$ for all $n \in \NN$. Note that $\Pi_{n}^\downarrow$ is the image of the set $\Pi_{(0,n)}^\downarrow$ under the mapping $(v,h)\mapsto (\sqrt{2}v,h)$. Thus, by the mapping theorem for Poisson point processes \cite[Theorem 5.1]{LP} and by Lemma \ref{lm:IntensityParabola} we have
    \[
        \EE(\eta_{f,\gamma}(\Pi_{n}^\downarrow))=2^{d\over 2}\EE(\eta_{f,\gamma}(\Pi_{(0,n)}^\downarrow))=\gamma(2\pi)^{\frac{d}{2}} (\frI^{\frac{d}{2}+1}f)(n)<\infty,
    \]
    for any $n\in \NN$ as follows from \eqref{eq:FICondition}. Therefore for $E$ being the interval of type (i) or (iii) we have showed that the point process $\eta_{f,\gamma}$ is almost surely regular and hence $\cL(\eta_{f,\gamma})$ is a random tessellation.
    
    Let now $E$ be an interval of type (ii). We first show, that $\cL_d(\eta_{f,\gamma})$ covers $\RR^d$ almost surely. Let $w\in \RR^d$, then $w\in C((v,h),\eta_{f,\gamma})$ if $(v,h)=\argmin_{(v',h')\in \eta_{f,\gamma}} \pow(w,(v',h'))$. By Lemma \ref{lm:IntensityParabola} we have 
    \begin{align*}
        \EE\big(\eta_{f,\gamma}(\inter \Pi_{(w,b)}^\downarrow)\big) &=  \gamma \pi^{\frac{d}{2}}(\frI^{\frac{d}{2}+1}f)(b),
    \end{align*}
    and, hence,
    \[
        \PP\big(\eta_{f,\gamma}(\inter \Pi_{(w,b)}^\downarrow) = \emptyset\big) = \exp\big(-\EE\big(\eta_{f,\gamma}(\inter \Pi_{(w,b)}^\downarrow)\big)\big) = \exp\big(-\gamma\pi^{\frac{d}{2}} (\frI^{\frac{d}{2}+1}f)(b)\big) = 0,
    \]
    since by \eqref{eq:frI_conv} we have $(\frI^{\frac{d}{2}+1}f)(b) = \infty$. This implies that almost surely there exists a point $(v_0,h_0)\in \eta_{f,\gamma}$ such that $h_0 < -\|v-w\|^2+b$ or equivalently $K:=\pow(w,(v_0,h_0)<b$. Then
    \[
        \argmin_{(v',h')\in \eta_{f,\gamma}} \pow(w,(v',h'))=\argmin_{(v',h')\in \eta_{f,\gamma}\cap \Pi_{(w,K)}^{\downarrow}} \pow(w,(v',h')).
    \]
    By Lemma \ref{lm:IntensityParabola} and \eqref{eq:FICondition} we have that $\eta_{f,\gamma}(\Pi_{(w,K)}^{\downarrow})<\infty$ almost surely and the existence of 
    \[
        (v,h)=\argmin_{(v',h')\in \eta_{f,\gamma}} \pow(w,(v',h'))
    \] 
    follows. Next we show that $\cL_d(\eta_{f,\gamma})$ is a locally finite system almost surely. Let $B\subset \RR^d$ be a compact subset of $\RR^d$. Note that $B$ can be covered by finitely many closed balls of radius $1$ and if there are infinitely many cells $C\in \cL_d(\eta_{f,\gamma})$ such that $B\cap C\neq \emptyset$ then there are infinitely many cells $C\in \cL_d(\eta_{f,\gamma})$ intersecting one of these balls. Noting that $\cL_d(\eta_{f,\gamma})$ is stationary with respect to the spatial coordinate we assume without loss of generality that $B=\BB^d$. Consider the event 
    \[
        \mathcal{E}:=\{\exists q<b:\min_{(v',h')\in \eta_{f,\gamma}} \pow(z,(v',h'))\le q \,\forall z\in \BB^d\}.
    \]
    Intuitively, thinking of $\cL_d(\eta_{f,\gamma})$ in terms of crystallization processes (see Section \ref{sec:ConstructionStandard}), the event $\mathcal{E}$ means that by time $q$ the set $\BB^d\subset \RR^d$ has already been completely covered by the crystals. Note
    \begin{equation}\label{eq:28.05.24_6}
        \PP(\#\{C\in \cL_d(\eta_{f,\gamma}): C\cap \BB^d \neq \emptyset\}=\infty)
        \le \PP(\#\{C\in \cL_d(\eta_{f,\gamma}): C\cap \BB^d \neq \emptyset\}=\infty\mid \mathcal{E})+\PP(\mathcal{E}^c).
    \end{equation}
    We will first show that 
    \begin{equation}\label{eq:28.05.24_1}
        \PP(\#\{C\in \cL_d(\eta_{f,\gamma}): C\cap \BB^d \neq \emptyset\}=\infty\mid \mathcal{E})=0.
    \end{equation}
    Let $(v,h)\in \eta_{f,\gamma}$ and let $z\in C((v,h),\eta_{f,\gamma})\cap \BB^d$. Then given the event $\mathcal{E}$ we have 
    \[
        \pow(z,(v,h))=\min_{(v',h')\in \eta_{f,\gamma}} \pow(z,(v',h'))\le q<b,
    \]
    and denoting by $W:=\{(v,h)\in \RR^d\times \RR\colon h\leq q, \|v\|\leq 1+\sqrt{q-h}\}$ we obtain
    \[
        \#\{C\in \cL_d(\eta_{f,\gamma}): C\cap \BB^d \neq \emptyset\}=\eta_{f,\gamma}(W).
    \]
    Hence it is sufficient to show that 
    \[
        \PP\big(\eta_{f,\gamma}(W)=\infty\big)=0.
    \] 
    Let us introduce the following construction. Let $\varepsilon$ be small enough such that $q+\varepsilon < b$. We cover the ball $\BB^d(1+\sqrt{\varepsilon/d})$ with finitely many $d$-dimensional boxes $A_1,\ldots,A_m$ having side length $\sqrt{\varepsilon/d}$ and diameter $\sqrt{\varepsilon}$ for all $i=1,\ldots,m$. We denote the vertices of $A_i$ by $w_j^{(i)}$, where $j=1,\ldots,2^d$. As next step we will show that 
    \begin{equation}\label{eq:28.05.24_2}
        W\subset \bigcup_{i=1}^m\bigcup_{j=1}^{2^d}\Pi^{\downarrow}_{(w_j^{(i)},q+\varepsilon)}.
    \end{equation}
    Let $(v,h)\in W$. First assume that $v\in A_i$ for some $i=1,\ldots,m$. Then
    \[
        \|v-w_j^{(i)}\| \le \sqrt{\varepsilon} \le \sqrt{q-h+\varepsilon}
    \]
    and, hence, $h\le -\|v-w_j^{(i)}\|^2+q+\varepsilon$ meaning that $(v,h)\in \Pi_{(w_j^{(i)},q+\varepsilon)}^\downarrow$. Let now $v\notin A_i$ for all $i=1,\ldots,m$ and, hence, $v\notin \BB^d(1+\sqrt{\varepsilon/d})$. Let $z\in \BB^d$ be such that 
    \begin{equation}\label{eq:28.05.24_3}
        \|z-v\|=\min_{z'\in\BB^d}\|z'-v\|=\|v\|-1,
    \end{equation}
    and let $y\in \bd\big(\bigcup_{i=1}^m A_i\big)$ be a point satisfying 
    \begin{equation}\label{eq:28.05.24_4}
        \|v-z\|=\|v-y\|+\|y-z\|.
    \end{equation} 
    In particular $y\in \bd A_i$ for some $1\leq i\leq m$. By definition we have $\|y-z\|\ge \sqrt{\varepsilon/d}$ and there exists a vertex $w_j^{(i)}$ of $A_i$, such that
    \begin{equation}\label{eq:28.05.24_5}
        \|y-w_j^{(i)}\| \le \sqrt{\frac{\varepsilon}{2d}} \le \|y-z\|.
    \end{equation}
    Hence, combining \eqref{eq:28.05.24_4} and \eqref{eq:28.05.24_5} we get
    \[
        \|w_j^{(i)}-v\| \le \|w_j^{(i)}-y\|+\|y-v\| \le \|z-y\|+\|y-v\| = \|z-v\|.
    \]
    Further since $(v,h)\in W$ and due to \eqref{eq:28.05.24_3} and the above inequality we conclude
    \[
    h\leq -(\|v\|-1)^2+q=-\|v-z\|^2+q\le -\|w_j^{(i)}-v\|^2+q
    \]
    and $(v,h)\in \Pi_{(w_j^{(i)},q+\varepsilon)}^\downarrow$. Thus, \eqref{eq:28.05.24_2} follows and we have 
    \begin{equation}\label{eq:2024_05_22}
            \PP\big(\eta_{f,\gamma}(W)=\infty\big)\leq \sum_{i=1}^m \sum_{j=1}^{2^d} \PP\big(\eta_{f,\gamma}(\Pi_{(w_j^{(i)},q+\varepsilon)}^\downarrow)=\infty\big).
    \end{equation}
    Finally by Lemma \ref{lm:IntensityParabola} and \eqref{eq:FICondition} we have
    \[
        \EE\big(\eta_{f,\gamma}(\Pi_{(w,q+\varepsilon)}^\downarrow)\big) = \gamma\pi^{d\over 2}(\frI^{{d\over 2}+1}f)(q+\varepsilon)<\infty,
    \]
    implying $\PP\big(\eta_{f,\gamma}(\Pi_{(w,q+\varepsilon)}^\downarrow)=\infty\big)=0$ for any $w\in \RR^d$. Combining this with \eqref{eq:2024_05_22} proves \eqref{eq:28.05.24_1}.
    
    It remains to show that $\PP(\mathcal{E}^c)=0$. Note that
    \[
        \mathcal{E}^c=\{\forall q < b \,\exists z\in \BB^d: \min_{(v,h)\in \eta_{f,\gamma}} \pow(z,(v,h))>q\}.
    \]
    Let $b_n:=b-\frac{1}{2n}$ and $A_n:=\{\exists z\in \BB^d: \min_{(v,h)\in \eta_{f,\gamma}}\pow(z,(v,h))>b_n \}$, then $\mathcal{E}^c\subseteq \limsup_{n\to \infty} A_n$. We will use the Borel-Cantelli lemma and show that
    \[
    \sum_{n=1}^{\infty}\PP(A_n)<\infty.
    \]
    Since we can cover $\BB^d$ with $4^dn^{d/2}$ balls of radius $\sqrt{1/8n}$ and due to stationarity of $\eta_{f,\gamma}$ with respect to the spatial coordinate it is enough to consider the event
    \[
        A_n':= \big\{\exists z\in \BB^d(\sqrt{1/8n}): \min_{(v,h)\in \eta_{f,\gamma}}\pow(z,(v,h))>b_n \big\},
    \]
    since 
    \begin{equation}\label{eq:28.05.24_7}
        \PP(A_n)\leq 4^dn^{d/2}\PP(A_n').
    \end{equation}
    Note, that if for some $(v,h)\in\eta_{f,\gamma}$ we have that $\pow(z,(v,h))\leq b_n$ for all $z\in \BB^d(\sqrt{1/8n})$, then $A_n'$ does not hold. Hence, $A_n'$ implies that for all $(v,h)\in\eta_{f,\gamma}$ we have that $\pow(z,(v,h))>b_n$ for at least one $z\in \BB^d(\sqrt{1/8n})$ and by the definition of the power function \eqref{eq:PowerFunction} we get
    \[
        \PP(A_n')\le \PP\big(\min_{(v,h)\in \eta_{f,\gamma}} \sup_{z\in \BB^d(\sqrt{1/8n})} \|v-z\|^2+h > b_n\big).
    \]
   Further note that for $h\leq b_n-1/8n$ and $v\in \BB^d(\sqrt{b_n-h}-\sqrt{1/8n})$ we have
    \begin{align*}   
        \sup_{z\in \BB^d(\sqrt{1/8n})} \|v-z\|^2+h &\le \sup_{z\in \BB^d(\sqrt{1/8n})}(\|v\|+ \|z\|)^2 + h \\
        &\le (\sqrt{b_n-h}-\sqrt{1/8n} + \sqrt{1/8n})^2 + h = b_n.
    \end{align*}
    Thus, defining 
    \[
        W_{n}:=\{(v,h)\in \RR^d \times \RR:  h\le b_n-1/8n,\, v\in \BB^d(\sqrt{b_n-h}-\sqrt{1/8n})\},
    \]
    we get
    \begin{align}    
        \PP(A_n')&\le \PP\big(\eta_{f,\gamma}(W_n) = \emptyset\big)=\exp\big(-\EE\big(\eta_{f,\gamma}(W_{n})\big)\big).\label{eq:28.05.24_8}
    \end{align}
    By Campbell's theorem \cite[Theorem 3.1.2]{SW} we obtain
    \begin{align*}
        \EE\big(\eta_{f,\gamma}(W_{n})\big) &= \gamma \int_{-\infty}^{b_n-1/8n} \int_{\BB^d(\sqrt{b_n-h}-\sqrt{1/8n})} f(h) \dd v \dd h\\
        &= \frac{\pi^{\frac{d}{2}}\gamma}{\Gamma({\frac{d}{2}}+1)}\int_{-\infty}^{b_n-1/8n} f(h) (\sqrt{b_n-h}-\sqrt{1/8n})^d \dd h\\
        &\ge \frac{\pi^{\frac{d}{2}}\gamma}{\Gamma({\frac{d}{2}}+1)} \int_{-\infty}^{b_n-1/2n} f(h) (\sqrt{b_n-h}-\sqrt{1/8n})^d \dd h.
    \end{align*}
    Notice that for $h\leq b_n-1/2n$ we have $\sqrt{b_n-h}\ge\sqrt{1/2n}$, which in turn implies $\sqrt{1/8n} \le \frac{1}{2} \sqrt{b_n-h}$. Hence, by \eqref{eq:Frac_Int} we conclude
    \begin{align*}
        \EE\big(\eta_{f,\gamma}(W_{n})\big) &\ge \frac{\pi^{\frac{d}{2}}\gamma}{2^d\Gamma({\frac{d}{2}}+1)}\int_{-\infty}^{b_n-1/2n} f(h) (b_n-h)^{d/2} \dd h\\
        &\ge \frac{\pi^{\frac{d}{2}}\gamma}{2^d\Gamma({\frac{d}{2}}+1)} \int_{-\infty}^{b_n-1/2n} f(h) (b_n-1/2n-h)^{d/2} \dd h\\
        &= \gamma 2^{-d}\pi^{d/2} (\frI^{{d\over 2}+1}f)(b-1/n).
    \end{align*}
    By assumption \eqref{eq:frI_conv} we have $(\frI^{{d\over 2}+1}f)(b-1/n)\ge n^\varepsilon$ for some $\varepsilon >0$ and all $n \ge n_0$ and, hence, by \eqref{eq:28.05.24_7} and \eqref{eq:28.05.24_8} we get
    \begin{align*}
        \sum_{n=1}^\infty \PP(A_n) \le \sum_{n=1}^\infty 4^d n^{d/2} \PP(A_n') \le 4^d \sum_{n=1}^{n_0} n^{d/2} \PP(A_n')+4^d\sum_{n=n_0}^\infty n^{d/2} \exp\big( - \gamma 2^{-d}\pi^{d/2} n^\varepsilon\big) < \infty.
    \end{align*}
    By the Borel-Cantelli lemma we therefore have $\PP(\mathcal{E}^c)=0$, which together with \eqref{eq:28.05.24_6} and \eqref{eq:28.05.24_1} implies that $\cL_d(\eta_{f,\gamma})$ is locally finite almost surely. Overall we showed for $E$ being an interval of type (ii) and $f$ satisfying \eqref{eq:FICondition} and \eqref{eq:frI_conv} that $\eta_{f,\gamma}$ is almost surely admissible and hence also in this case $\cL_d(\eta_{f,\gamma})$ is a random tessellation.
    
    Further the Laguerre tessellation is almost surely normal since $f$ is absolutely continuous with respect to the Lebesgue measure which implies \hyperref[item:P3]{(P3)} and \hyperref[item:P4]{(P4)}, see the proof of \cite[Lemma 3]{GKT20} for more details.
\end{proof}

\begin{proof}[Proof of Proposition \ref{prop:compwlin}]    
    First of all we note that $f \circ \varphi$ is clearly non-negative and measurable and since $K\subseteq E$ is compact if and only if $\varphi^{-1}(K)\subseteq \varphi^{-1}(E)$ is compact we get $f \circ \varphi\in L_{\rm loc}^{1,+}(\varphi^{-1}(E))$. By convention we have $f\circ \varphi(x)=0$ for all $x\not \in \varphi^{-1}(E)$, see Remark \ref{rem:convention}. In order to show \hyperref[item:F1]{(F1)} (see Definition \ref{def:admissible}) for $f\circ \varphi$ we let $t\in \varphi^{-1}(E)$, then
    \begin{align*}
        \big(\frI^{\frac{d}{2}+1}\left(f\circ \varphi\right)\big)(t)&=\frac{1}{\Gamma\left(\frac{d}{2}+1\right)}\int_{\varphi^{-1}(a)}^{t}f(\varphi(x))(t-x)^{\frac{d}{2}}\dd x\\
        &=\frac{1}{\Gamma\left(\frac{d}{2}+1\right)}\lambda^{-1}\int_{a}^{\varphi(t)}f(y)(t-\varphi^{-1}(y))^{\frac{d}{2}}\dd y\\
        &=\frac{1}{\Gamma\left(\frac{d}{2}+1\right)}\lambda^{-\frac{d}{2}-1}\int_{a}^{\varphi(t)}f(y)(\lambda t+c-y)^{\frac{d}{2}}\dd y\\
        &=\lambda^{-\frac{d}{2}-1}(\frI^{\frac{d}{2}+1}f)(\varphi(t))<\infty,
    \end{align*}
    since $\varphi(t)\in E$ and $\lambda >0$. For $E$ being an interval of type (ii) we have $\varphi^{-1}((-\infty,b))=(-\infty, \frac{b-c}{\lambda})$, where $b\in \RR$. Since $f$ satisfies \hyperref[item:F2]{(F2)} for some $n_0$ and $\varepsilon>0$ by Definition \ref{def:admissible} and according to the previous computations we have
    \begin{align*}
        \big(\frI^{\frac{d}{2}+1}\left(f\circ \varphi\right)\big)\Big((b-c)/\lambda-1/n\Big)=\lambda^{-\frac{d}{2}-1}(\frI^{\frac{d}{2}+1}f)(b-\lambda/n) \ge \lambda^{-\frac{d}{2}-1}\Big(\left\lfloor {n\over \lambda}\right\rfloor\Big)^{\varepsilon}\ge n^{\varepsilon/2}
    \end{align*}
    for sufficiently big $n$ due to monotonicity of the fractional integral. This means that $f\circ \varphi$ satisfies \hyperref[item:F2]{(F2)} and, hence, is admissible. Therefore, according to Theorem \ref{thm:FICondition} the diagram $\cL_{d,\widetilde{\gamma}}(f\circ \varphi)$ is indeed a random tessellation in $\RR^d$ and normal almost surely. 
    
    Next, we show that shifting the height coordinate $h$ does not influence the tessellation. For $c\in \RR$ let $\eta_{f\circ \tau_c,\gamma}$ be a Poisson point process on $\RR^d \times \tau_c^{-1}(E)$ with intensity measure $\Lambda_{f\circ \tau_c,\gamma}$ having density $\gamma (f\circ \tau_c)$. Define the transformation $g_c\colon\RR^d\times E\to\RR^d\times\tau_c^{-1}(E)$, $(v,h)\mapsto(v,h-c)$. By the mapping theorem for Poisson point processes \cite[Theorem 5.1]{LP} we have that $g_c(\eta_{f,\gamma})$ defines a Possion point process on $\RR^d\times\tau_c^{-1}(E)$ with intensity measure $g_c(\Lambda_{f,\gamma})=\Lambda_{f\circ \tau_c,\gamma}$ and, hence, $\eta_{f\circ \tau_c,\gamma}\overset{d}{ =} g_c(\eta_{f,\gamma})$ implying $\cL_d(\eta_{f\circ \tau_c,\gamma})\overset{d}{=}\cL_d(g_c(\eta_{f,\gamma}))$.  Further we show that $\cL_d(\eta_{f,\gamma})=\cL_d(g_c(\eta_{f,\gamma}))$ almost surely. Let $(v,h)\in \eta_{f,\gamma}$, $c\in\RR$ then
    \begin{align*}
        C((v,h),\eta_{f,\gamma})&=\{w\in\RR^d:\|w-v\|^2+h\le\|w-v'\|^2+h'\text{ for all } (v',h')\in\eta_{f,\gamma}\}\\
        &=\{w\in\RR^d:\|w-v\|^2+h-c\le\|w-v'\|^2+h'-c\text{ for all } (v',h')\in\eta_{f,\gamma}\}\\
        &=\{w\in\RR^d:\|w-\widetilde{v}\|^2+\widetilde{h}\le\|w-\widetilde{v}'\|^2+\widetilde{h}'\text{ for all } (\widetilde{v}',\widetilde{h}')\in g_c(\eta_{f,\gamma})\}\\
        &=C(g(v,h),g_c(\eta_{f,\gamma})).
    \end{align*}
    Since $g_c$ is a bijection we also have $C((\widetilde{v},\widetilde{h}),g_c(\eta_{f,\gamma}))=C(g_c^{-1}(\widetilde{v},\widetilde{h}),\eta_{f,\gamma})$ for all $(\widetilde{v},\widetilde{h})\in g_c(\eta_{f,\gamma})$. Therefore,
    \[
    \cL_d(\eta_{f,\gamma})=\cL_d(g_c(\eta_{f,\gamma}))\overset{d}{=}\cL_d(\eta_{f\circ \tau_c,\gamma}).
    \]

    Now, we show the impact of scaling the height parameter $h$ by some $\lambda>0$. For $\lambda>0$ we consider the transformation $T_{\lambda}:\RR^d \times E \to\RR^d\times (\lambda^{-1}E)$, $T_{\lambda}(v,h):=(\lambda^{-\frac{1}{2}}v,\lambda^{-1}h)$. By the mapping theorem for Poisson point processes \cite[Theorem 5.1]{LP} we have that $T_{\lambda}(\eta_{f,\gamma})$ is a Poisson point process on $\RR^d\times(\lambda^{-1}E)$ with intensity measure $\Lambda$, given by
    \begin{align*}
      \Lambda(B\times (-\infty,s)) &= \gamma \int_{\RR^d} \int_{\RR} f(h) {\bf 1}(T_{\lambda}(v,h)\in B\times (-\infty,s)){\bf 1}(h\in E) \dd h \dd v\\
      &=\lambda\gamma \int_{\RR^d} \int_{\RR} f(\lambda x) {\bf 1}(\lambda^{-\frac{1}{2}}v\in B){\bf 1}(x< s){\bf 1}(x\in \lambda^{-1}E) \dd x \dd v\\
      &=\lambda^{\frac{d}{2}+1}\gamma \int_{\RR^d} \int_{\RR} f(\lambda x) {\bf 1}(w\in B){\bf 1}(x< s){\bf 1}(x\in \lambda^{-1}E) \dd x \dd w,
    \end{align*}
    for any Borel set $B\subset\RR^d$ and $s\in\RR$. Therefore $T_{\lambda}(\eta_{f,\gamma})$ coincides in distribution with a Poisson point process $\widetilde{\eta}$ on $\RR^d\times (\lambda^{-1}E)$ with intensity measure having density $\widetilde{\gamma}f(\lambda h)$, where $\widetilde{\gamma}=\lambda^{\frac{d}{2}+1}\gamma$. Now let $w\in \RR^d$ and $(v,h)\in\eta_{f,\gamma}$. Then $w\in C((v,h),\eta_{f,\gamma})$ if and only if $w\in \lambda^\frac{1}{2}C(T_{\lambda}(v,h),T_{\lambda}(\eta_{f,\gamma}))$. Indeed,
    \begin{align*}
        &C((v,h),\eta_{f,\gamma}) =\{w\in \RR^d: \|w-v\|^2+h \le \|w-v'\|^2+h'\,\, \text{for all } (v',h')\in \eta_{f,\gamma}\}\\
        &=\{w\in \RR^d: \|\lambda^{-\frac{1}{2}}w-\lambda^{-\frac{1}{2}}v\|^2+\lambda^{-1}h \le \|\lambda^{-\frac{1}{2}}w-\lambda^{-\frac{1}{2}}v'\|^2+\lambda^{-1}h'\,\, \text{for all } (v',h')\in \eta_{f,\gamma}\}\\
        &=\{\lambda^{1\over 2}\widetilde w\in \RR^d: \|\widetilde w-\widetilde{v}\|^2+\widetilde{h} \le \|\widetilde w-\widetilde{v}'\|^2+\widetilde{h}'\,\, \text{for all } (\widetilde{v}',\widetilde{h}')\in T_{\lambda}(\eta_{f,\gamma})\}\\
        &=\lambda^{\frac{1}{2}}C(T_{\lambda}(v,h),T_{\lambda}(\eta_{f,\gamma})).
    \end{align*}
    Finally, since the Laguerre tessellation is defined as the collection of all non-empty Laguerre cells, it follows that $\cL_d(\eta_{f,\gamma})=\lambda^{\frac{1}{2}}\cL_d(T_{\lambda}(\eta_{f,\gamma}))$ and hence $\cL_d(\eta_{f,\gamma})$ and $\lambda^{\frac{1}{2}}\cL_d(\widetilde{\eta})$ coincide in distribution. 
    
    Combining these two steps we get that the random tessellations $\cL_{d,\gamma}(f)$ and $\lambda^{\frac{1}{2}}\cL_{d,\widetilde{\gamma}}(f\circ \varphi)$, where $\widetilde{\gamma}:=\lambda^{\frac{d}{2}+1}\gamma$, coincide in distribution.
\end{proof}

Before we prove Proposition \ref{prop:trunc} we formulate the following lemma, which will also be useful in Section \ref{sec:typcell}.

\begin{lemma}\label{lm:jacobian}
    Consider the transformation $\Psi$ defined as
    \begin{align*}
        \Psi\colon  &\RR^{d} \times \RR \times (\RR^{d})^{d+1} \to &&(\RR^{d} \times \RR)^{d+1}\\
        &(w,p,y_1,\dots,y_{d+1}) \mapsto &&(w+y_1,p-\|y_1\|^2,\dots,w+y_{d+1},p-\|y_{d+1}\|^2),
    \end{align*}
    and let $J(\Psi)$ be its Jacobian matrix.  Then $|\det J(\Psi)| =2^{d+1}d!\Delta_{d}(y_1,\dots,y_{d+1})$.
\end{lemma}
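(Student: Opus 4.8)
The plan is to evaluate $\det J(\Psi)$ directly by ordering the input and output coordinates so that a large identity block appears, and then to collapse the determinant through a single Schur complement. Write $U_i:=w+y_i\in\RR^d$ and $S_i:=p-\|y_i\|^2\in\RR$ for the output coordinates. Both domain and target have dimension $(d+1)^2$, so $J(\Psi)$ is a square matrix, and since I will permute rows and columns freely I work throughout up to a sign that is absorbed by the absolute value.

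First I would order the inputs as $(y_1,\dots,y_{d+1},w,p)$ and the outputs as $(U_1,\dots,U_{d+1},S_1,\dots,S_{d+1})$. Reading off the partial derivatives
\[
    \frac{\partial U_i}{\partial y_j}=\delta_{ij}I_d,\qquad \frac{\partial U_i}{\partial w}=I_d,\qquad \frac{\partial U_i}{\partial p}=0,\qquad \frac{\partial S_i}{\partial y_j}=-2\delta_{ij}y_i^\top,\qquad \frac{\partial S_i}{\partial w}=0,\qquad \frac{\partial S_i}{\partial p}=1,
\]
the Jacobian takes the block form $\begin{pmatrix}A&B\\C&D\end{pmatrix}$ with $A=I_{d(d+1)}$, $B=[\,\mathbf{1}_{d+1}\otimes I_d \mid 0\,]$, $C=-2\,\mathrm{diag}(y_1^\top,\dots,y_{d+1}^\top)$ and $D=[\,0\mid\mathbf{1}_{d+1}\,]$, where $\mathbf{1}_{d+1}$ denotes the all-ones column and $\mathbf{1}_{d+1}\otimes I_d$ the vertical stack of $d+1$ copies of $I_d$.

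Since $A=I_{d(d+1)}$ is invertible, the Schur-complement identity gives $\det J(\Psi)=\det A\cdot\det(D-CA^{-1}B)=\det(D-CB)$. The product $CB$ is immediate from the block-diagonal shape of $C$: its $i$-th row $-2y_i^\top$ meets only the $i$-th row-block $[\,I_d\mid 0\,]$ of $B$, contributing $-2y_i^\top$ in the $w$-columns and $0$ in the $p$-column. Hence
\[
    D-CB=\begin{pmatrix} 2y_1^\top & 1 \\ \vdots & \vdots \\ 2y_{d+1}^\top & 1 \end{pmatrix}
\]
is a $(d+1)\times(d+1)$ matrix. Pulling a factor $2$ out of each of the $d$ spatial columns and invoking the classical volume formula, namely that the $(d+1)\times(d+1)$ matrix with rows $(y_i^\top,1)$ has determinant of absolute value $d!\,\Delta_{d}(y_1,\dots,y_{d+1})$, I would conclude $|\det J(\Psi)|=2^{d}\,d!\,\Delta_{d}(y_1,\dots,y_{d+1})$.

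The computation is pure bookkeeping, and the only delicate point is tracking the exact power of $2$: carrying out the count above I obtain the exponent $d$ rather than the stated $d+1$. A sanity check in dimension $d=1$, where $\Psi(w,p,y_1,y_2)=(w+y_1,p-y_1^2,w+y_2,p-y_2^2)$ gives $|\det J|=2|y_2-y_1|=2^1\cdot 1!\cdot\Delta_1$, confirms the exponent $d$, so I would reconcile this against the stated exponent before finalizing. As an independent cross-check one can avoid the Schur complement entirely and instead use the identity blocks of the pair $(U_1,S_1)$ to clear the $w$- and $p$-columns by row operations, reducing $J(\Psi)$ to block-triangular form; evaluating the remaining block returns the same value $2^{d}\,d!\,\Delta_{d}$, as I verified directly in the case $d=2$.
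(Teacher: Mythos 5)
Your proof is correct, and it is in fact more than the paper provides: the paper does not prove this lemma at all but merely refers to the proof of Theorem 5.1 in \cite{GKT21}, so your Schur-complement computation is a genuine, self-contained argument where the source offers only a citation. The block decomposition, the identity $\det J(\Psi)=\det(D-CB)$, the evaluation of $CB$, and the reduction to the $(d+1)\times(d+1)$ matrix with rows $(2y_i^\top,1)$ are all right, as is the identification of $|\det(y_i^\top,1)_{i=1}^{d+1}|$ with $d!\,\Delta_d(y_1,\dots,y_{d+1})$.

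The discrepancy you flag is real and you should trust your own count: the correct constant is $2^{d}$, not $2^{d+1}$. Your $d=1$ check is decisive (for $y_1=0$, $y_2=1$, $w=p=0$ the $4\times 4$ determinant equals $2$, whereas the lemma as stated would give $4$), and the same verification for $d=2$ with $y_1=(0,0)$, $y_2=(1,0)$, $y_3=(0,1)$ yields $4=2^2\cdot 2!\cdot\tfrac12$ rather than $8$. So the stated lemma carries a spurious factor of $2$. This factor propagates into the prefactor $(2\gamma)^{d+1}$ appearing in the proofs of Proposition \ref{prop:trunc} and Theorem \ref{thm:typcell} (it should be $2^{d}\gamma^{d+1}(d+1)!/(d+1)!$ in place of $2^{d+1}\gamma^{d+1}$), but since the same constant enters both $S_{f,\gamma,\nu}$ and $\alpha(f,\gamma,\nu)$ it cancels in the probability measure $\PP_{f,\gamma,\nu}$ and in the moment ratios of Theorem \ref{thm:moments}; only the absolute normalizing constant $\alpha(f,\gamma,\nu)$ is affected. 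In short: do not ``reconcile'' your exponent with the stated one --- finalize your proof with $2^{d}$ and note the correction.
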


\begin{proof}
    For the proof of this lemma see the proof of Theorem 5.1 in \cite{GKT21}.
\end{proof}

\begin{proof}[Proof of Proposition \ref{prop:trunc}]
We start by noting that by restriction properties of Poisson point process \cite[Theorem 5.2]{LP} we get that $\widetilde\eta_{E}=\{(v,h)\in\widetilde \eta\colon h\in E\}$ has the same distribution as $\eta_{f,\gamma}$. Thus, it is sufficient to show that $\PP(\cL_{d}(\widetilde\eta_{E})=\cL_{d}(\widetilde\eta))=1$, where $\cL_{d}(\widetilde\eta_{E})$ is a normal random tessellation according to Theorem \ref{thm:FICondition}. 
We consider the complement of the above event, i.e. $\cL_{d}(\widetilde\eta_{E})\neq\cL_{d}(\widetilde\eta)$, and note that since $\widetilde{\eta}_{E}\subseteq \widetilde{\eta}$ we have for any $(v,h)\in\widetilde{\eta}_{E}$ that
\begin{equation}\label{eq:subset}
    \begin{aligned}
        C((v,h),\widetilde{\eta})&=\left\{w \in \RR^d: \pow(w,(v,h)) \le \pow(w,(v',h')) \text{ for all } (v',h') \in \widetilde{\eta}\right\}\\
        &\subseteq \left\{w \in \RR^d: \pow(w,(v,h)) \le \pow(w,(v',h')) \text{ for all } (v',h') \in \widetilde{\eta}_{E}\right\} \\
        &= C((v,h),\widetilde{\eta}_{E}).
    \end{aligned}
\end{equation}
Keeping in mind that the cells of $\cL_{d}(\widetilde\eta_{E})$ cover $\RR^d$ and using \eqref{eq:subset} we get that $\cL_{d}(\widetilde\eta_{E})\neq\cL_{d}(\widetilde\eta)$ implies that there exists a point $(v,h)\in \widetilde{\eta}_{E}$ such that $C((v,h),\widetilde{\eta}) \subsetneq C((v,h),\widetilde{\eta}_{E})$ and, hence,
\[
    \PP(\cL_{d}(\widetilde\eta_{E})\neq\cL_{d}(\widetilde\eta))\le \PP(\exists (v,h)\in \widetilde{\eta}_{E}: C((v,h),\widetilde{\eta})\subsetneq C((v,h),\widetilde{\eta}_{E})).
\]
Let $(v,h)\in \widetilde{\eta}_{E}$ be such that $C((v,h),\widetilde{\eta})\subsetneq C((v,h),\widetilde{\eta}_{E})$. Since the cells of $\cL_{d}(\widetilde\eta_{E})$ are convex polytopes there exists a vertex $w\in\cF_0(C((v,h),\widetilde{\eta}_{E}))$ such that $w\in C((v,h),\widetilde{\eta}_{E})\setminus C((v,h),\widetilde{\eta})$. The latter implies that
\begin{align}\label{eq:pow}
    \|v-w\|^2+h>\|v'-w\|^2+h'
\end{align} 
for some $(v',h')$ with $h'\ge b$.
As argued in Section \ref{sec:DualModel} there exist exactly $d+1$ distinct points $(v_1,h_1),\ldots, (v_{d+1},h_{d+1})$ of $\widetilde{\eta}_{E}$ such that the points $(v_1,h_1),\ldots, (v_{d+1},h_{d+1})$ belong to the downward paraboloid $\Pi_{(w,p)}$ with apex $(w,p)$, $p\in\RR$ and $\widetilde{\eta}_{E} \cap \inter \Pi_{(w,p)}^{\downarrow}=\emptyset$, where one of these points is $(v,h)$, say $(v_1,h_1)=(v,h)$. Since $(v,h)\in \Pi_{(w,p)}$ and using \eqref{eq:pow} we have 
\[
    p=\|v-w\|^2+h>\|v'-w\|^2+h'\ge b.
\]
For $d+1$ distinct points $x_1=(v_1,h_1),\ldots, x_{d+1}=(v_{d+1},h_{d+1})\in\widetilde\eta_{E}$ recall that $\Pi(x_1,\dots,x_{d+1})$ denotes the almost surely unique downward paraboloid containing $x_1,\dots,x_{d+1}$ on its boundary. Hence, recalling that $\widetilde\eta_{E}\overset{d}{=}\eta_{f,\gamma}$ we have
\begin{align*}
    \PP(\cL_{d}(\widetilde\eta_{E})\neq\cL_{d}(\widetilde\eta))&\le \PP\Big(\exists (x_1,\ldots,x_{d+1})\in (\eta_{f,\gamma})_{\neq}^{d+1}:\inter\Pi^\downarrow(x_1,\ldots,x_{d+1})\cap \eta_{f,\gamma}=\emptyset,\\
    &\hspace{5cm}\apex\Pi(x_1,\ldots,x_{d+1})=(w,p), p\ge b\Big).
\end{align*}
Here $(\eta_{f,\gamma})_{\neq}^{d+1}$ denotes the collection of all tuples of the form $(x_1,\dots,x_{d+1})$ consisting of pairwise distinct points $x_1,\dots,x_{d+1}$  of the Poisson point process $\eta_{f,\gamma}$. Further applying the multivariate Mecke's formula \cite[Corollary 3.2.3]{SW} and get
\begin{align*}
    &\PP\Big(\exists (x_1,\dots,x_{d+1})\in (\eta_{f,\gamma})_{\neq}^{d+1}:\inter\Pi^\downarrow(x_1,\ldots,x_{d+1})\cap \eta_{f,\gamma}=\emptyset,\\
    &\hspace{5cm}\apex\Pi(x_1,\dots,x_{d+1})=(w,p), p\ge b\Big)\\
    &\le \EE\sum_{(x_1,\dots,x_{d+1})\in (\eta_{f,\gamma})_{\neq}^{d+1}} {\bf 1}(\inter\Pi^\downarrow_{(w,p)}\cap\eta_{f,\gamma}=\emptyset, \apex\Pi^\downarrow(x_1,\dots,x_{d+1})=(w,p), p\ge b)\\
    &= \gamma^{d+1}\int_{(\RR^d)^{d+1}}\int_{\RR^{d+1}} \PP(\eta_{f,\gamma}(\inter\Pi^\downarrow_{(w,p)})=0){\bf 1}(\apex\Pi^\downarrow((v_1,h_1),\dots,(v_{d+1},h_{d+1}))=(w,p))\\
    &\hspace{3cm}\times{\bf 1}(p\ge b)\prod_{i=1}^{d+1}f(h_i)\dd h_1,\dots,\dd h_{d+1}\dd v_1,\dots,\dd v_{d+1}.
\end{align*}
 Consider the transformation $\Psi\colon\RR \times (\RR^d)^{d+1} \times \RR^d \to (\RR^d \times \RR^{d+1})$, 
 \[
    \Psi(p,y_1,\dots,y_{d+1}, w):=(w+y_1,p-\|y_1\|^2,\dots,w+y_{d+1},p-\|y_{d+1}\|^2).
 \] 
 Applying $\Psi$ and by Lemma \ref{lm:jacobian} we get
\begin{align*}
    \PP(\cL_{d}(\widetilde\eta_{E})\neq\cL_{d}(\widetilde\eta))&\le(2\gamma)^{d+1}d!\int_{\RR^d}\int_{(\RR^d)^{d+1}}\int_{b}^{\infty} \PP(\eta_{f,\gamma}(\inter\Pi^\downarrow_{(w,p)})=0)\Delta_{d}(y_1,\ldots,y_{d+1})\\
    &\hspace{5cm}\times\prod_{i=1}^{d+1}f(p-\|y_i\|^2)\dd p \dd y_1,\dots,\dd y_{d+1}\dd w.
\end{align*}
For $w\in \RR^d$ and $p\ge b$ we have by Lemma \ref{lm:IntensityParabola} that
\[
    \PP(\eta_{f,\gamma}(\inter\Pi^\downarrow_{(w,p)})=0)=\exp\big(-\gamma\pi^{d\over 2}(\frI^{{d\over 2}+1}f)(p)\big)=0,
\]
since from \hyperref[item:F2]{(F2)} (see Definition \ref{def:admissible}) and monotonicity of the fractional integral in $p$ it follows that $(\frI^{{d\over 2}+1}f)(p)\ge(\frI^{{d\over 2}+1}f)(b)= \infty$. This finishes the proof.
\end{proof}

\section{Affine sections of Poisson-Laguerre tessellations}\label{sec:sectional}

In this section we study sectional properties of the Poisson-Laguerre tessellation $\cL_{d,\gamma}(f)$. Let $L\in A(d,\ell)$ be an affine subspace of dimension $\ell$. Given a tessellation $T$ in $\RR^d$ define 
$$
T\cap L:=\{t\cap L\colon t\in T\},
$$
which is again a tessellation in $L$. Since $L$ is isometric to $\RR^{\ell}$ we will always assume without loss of generality that $T\cap L$ is a tessellation in $\RR^{\ell}$. The aim of this section is to determine the distribution of $\cL_{d,\gamma}(f)\cap L$. Our main motivation to study this problem is the fact that for a stationary and isotropic random tessellation $\cT$ the knowledge of the properties of its sections $\cT\cap L$  gives access to mean value characteristics of the original tessellation $\cT$ (see i.e. \cite[p. 466-467]{SW} for more details).

Sections of random tessellations $\cV_{d,\beta,\gamma}$, $\cV'_{d,\beta,\gamma}$ and $\cG_{d,\lambda}$ from Example \ref{ex:BetaModels} have been studied in \cite{sectional} and in particular in \cite[Theorem~4.1]{sectional} the following results have been shown: for any $L\in A(d,\ell)$ and any $\gamma>0$ it holds that
\begin{itemize}
	\item[(1)] for any $\beta\ge -1$ we have $(\cV_{d,\beta,\gamma}\cap 
 L)\overset{d}{=}\cV_{\ell,\beta+\frac{d-\ell}{2},\gamma}$  (up to isometry);
	\item[(2)]  for any  $\beta>\frac{d}2 + 1$ we have  $(\cV'_{d,\beta,\gamma}\cap L)\overset{d}{=}\cV'_{\ell,\beta-\frac{d-\ell}{2},\gamma}$  (up to isometry);
	\item[(3)]  for any  {$\lambda>0$,} we have  $(\cG_{d,\lambda,\gamma}\cap L)\overset{d}{=}\cG_{\ell,\lambda,\gamma}$  (up to isometry).
\end{itemize}

We generalize the above result to random Poisson-Laguerre tessellations $\cL_{d,\gamma}(f)$ with admissible $f$ and prove the following theorem, which is the main result of this section.

\begin{theorem} \label{thm:sectional}
    Let $L\subset \RR^d$ be an $\ell$-dimensional affine subspace, where $\ell \in \{1,\dots,d-1\}$ and let $f$ be admissible. Then for any $\gamma>0$ we have
    \[
        (\cL_{d,\gamma}(f) \cap L)\overset{d}{=}\cL_{\ell,\gamma}(f_{\ell})\qquad \text{(up to isometry)},
    \]
    where
    \begin{align*}
        f_{\ell}(p):= 
        \begin{cases}
            \pi^{\frac{d-\ell}{2}}(\frI^{\frac{d-\ell}{2}}f)(p)\qquad &\text{if }p\in E,\\ 
            0 &\text{otherwise},
        \end{cases}
    \end{align*}
    and $\frI^{\frac{d-\ell}{2}}f$ is defined in \eqref{eq:Frac_Int}. Moreover, $\cL_{\ell,\gamma}(f_{\ell})$ is almost surely a normal random tessellation, namely $f_{\ell}$ is admissible.
     
\end{theorem}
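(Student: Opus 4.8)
The plan is to realize the section directly as a lower-dimensional Laguerre diagram by restricting the power function to $L$, to read off the intensity of the resulting generating process from a computation of the same type as in Lemma~\ref{lm:IntensityParabola}, and finally to transfer admissibility from $f$ to $f_\ell$ via the semigroup property. First I would use a rigid motion to assume $L=\RR^{\ell}\times\{0\}^{d-\ell}$ and split each spatial coordinate as $v=(v_L,v_\perp)\in\RR^{\ell}\times\RR^{d-\ell}$. For $w=(u,0)\in L$ the power function factorizes as
\[
    \pow\big((u,0),(v,h)\big)=\|u-v_L\|^2+\big(h+\|v_\perp\|^2\big),
\]
i.e.\ it is the $\ell$-dimensional power of $u$ with respect to the pair $(v_L,\,h+\|v_\perp\|^2)$. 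Introducing
\[
    T\colon\RR^{\ell}\times\RR^{d-\ell}\times\RR\to\RR^{\ell}\times\RR,\qquad T(v_L,v_\perp,h)=(v_L,\,h+\|v_\perp\|^2),
\]
this gives $C((v,h),\eta_{f,\gamma})\cap L\cong C_\ell\big(T(v,h),T(\eta_{f,\gamma})\big)$, where $C_\ell$ denotes the $\ell$-dimensional Laguerre cell. Since $T$ is almost surely injective on $\eta_{f,\gamma}$ and its image is simple, the full-dimensional section cells correspond bijectively to the non-empty $\ell$-dimensional Laguerre cells of $T(\eta_{f,\gamma})$, so that $\cL_{d,\gamma}(f)\cap L\cong\cL_\ell\big(T(\eta_{f,\gamma})\big)$ up to isometry.

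Next I would compute the intensity of $T(\eta_{f,\gamma})$. By the mapping theorem it is Poisson with intensity $T_*\Lambda_{f,\gamma}$, and passing to polar coordinates in $v_\perp$ and substituting $s=\|v_\perp\|^2$, $p=h+s$ exactly as in the proof of Lemma~\ref{lm:IntensityParabola} yields the density
\[
    \gamma\,\frac{\omega_{d-\ell}}{2}\,\Gamma\!\Big(\tfrac{d-\ell}{2}\Big)\big(\frI^{\frac{d-\ell}{2}}f\big)(p)=\gamma\,\pi^{\frac{d-\ell}{2}}\big(\frI^{\frac{d-\ell}{2}}f\big)(p)
\]
with respect to $\dint v_L\,\dint p$, the constant collapsing since $\tfrac{\omega_{d-\ell}}{2}\Gamma(\tfrac{d-\ell}{2})=\pi^{\frac{d-\ell}{2}}$. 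For intervals of type (i) and (iii) the projected height $p=h+\|v_\perp\|^2$ always lies in $E$, so this density equals $\gamma f_\ell$, whence $T(\eta_{f,\gamma})\overset{d}{=}\eta_{f_\ell,\gamma}$ and the theorem follows once $f_\ell$ is admissible.

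To establish admissibility of $f_\ell$ I would lean on the semigroup property \eqref{eq:SemigroupProp}, which gives
\[
    \frI^{\frac{\ell}{2}+1}f_\ell=\pi^{\frac{d-\ell}{2}}\,\frI^{\frac{\ell}{2}+1}\frI^{\frac{d-\ell}{2}}f=\pi^{\frac{d-\ell}{2}}\,\frI^{\frac{d}{2}+1}f.
\]
Condition \hyperref[item:F1]{(F1)} for $f_\ell$ (the dimension-$\ell$ version, i.e.\ finiteness of $\frI^{\frac{\ell}{2}+1}f_\ell$ on $E$) is then immediate from \hyperref[item:F1]{(F1)} for $f$, and in the type (ii) case \hyperref[item:F2]{(F2)} for $f_\ell$ follows from the same identity together with \hyperref[item:F2]{(F2)} for $f$ because $\pi^{\frac{d-\ell}{2}}\ge1$. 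Beforehand I must check $f_\ell\in L^{1,+}_{\rm loc}(E)$ so that $\cL_{\ell,\gamma}(f_\ell)$ is even defined: for type (i) this is the local integrability statement of \cite[Lemma~2.1]{MARTINEZ1992111}, while for type (ii) I would pass through \eqref{eq:frac_f_g} and reduce it to $f_{(d-\ell)/2}\in L^{1,+}_{\rm loc}([0,\infty))$, which in turn follows from $f_{d/2+1}\in L^{1,+}_{\rm loc}([0,\infty))$ (forced by \hyperref[item:F1]{(F1)} after the substitution $t=-1/u$, the behaviour near $u=0$ being the tail of $f$ at $-\infty$) together with $(d-\ell)/2<d/2+1$.

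The main obstacle is the type (ii) interval $E=(-\infty,b)$. There the projected height $p=h+\|v_\perp\|^2$ is unbounded above, so $T_*\Lambda_{f,\gamma}$ places mass on $\{p\ge b\}$; indeed $(\frI^{\frac{d-\ell}{2}}f)(p)=\infty$ for $p>b$ since $f\notin L^{1,+}_{\rm loc}((-\infty,b])$ (Remark~\ref{rem:NotLocInt}). Consequently $T(\eta_{f,\gamma})$ is \emph{not} $\eta_{f_\ell,\gamma}$, but rather the generalized (truncated) Poisson process of Proposition~\ref{prop:trunc} associated with $f_\ell$, whose density is $\gamma f_\ell$ on $(-\infty,b)$ and $+\infty$ on $[b,\infty)$. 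Applying Proposition~\ref{prop:trunc} in dimension $\ell$ — which is legitimate precisely because the previous step shows $f_\ell$ admissible — then gives $\cL_\ell\big(T(\eta_{f,\gamma})\big)\overset{d}{=}\cL_{\ell,\gamma}(f_\ell)$ and closes the argument. I expect the delicate points to be exactly this truncation bookkeeping and the local integrability of $f_\ell$ near $b$, i.e.\ guaranteeing that the sectional generator is again admissible in the non-locally-integrable regime.
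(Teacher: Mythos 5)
Your argument is correct and reaches the paper's conclusion through all the same essential ingredients, but it realizes the key geometric step differently. The paper identifies the section via the computational-geometry construction of Section \ref{sec:polyhedral}: it lifts $\eta_{f,\gamma}$ by $(v,h)\mapsto(v,\|v\|^2+h)$, intersects the polyhedral set $P(\eta_{f,\gamma})$ with $L'$, shows $x^\circ\cap L'=[\proj_{L'}(x)]^\circ_{(\Pi^+)'}$, and then proceeds by induction on the codimension (only the case $\ell=d-1$ is treated directly, and the general case follows from the semigroup property $\frI^{1/2}\circ\frI^{(d-\ell-1)/2}=\frI^{(d-\ell)/2}$). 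You instead observe directly that $\pow((u,0),(v,h))=\|u-v_L\|^2+(h+\|v_\perp\|^2)$, so the section is the $\ell$-dimensional Laguerre diagram of $T(\eta_{f,\gamma})$ with $T(v_L,v_\perp,h)=(v_L,h+\|v_\perp\|^2)$ — this is exactly the paper's polarity argument unwound into elementary form, and it lets you handle general codimension in one stroke without induction. From there the two proofs coincide: the mapping theorem and the polar-coordinate computation give the density $\gamma\pi^{(d-\ell)/2}(\frI^{(d-\ell)/2}f)$, admissibility of $f_\ell$ follows from the semigroup property together with \hyperref[item:F1]{(F1)}/\hyperref[item:F2]{(F2)}, and the type (ii) case is closed by Proposition \ref{prop:trunc}, which is precisely how the paper deals with the mass that $T_*\Lambda_{f,\gamma}$ places on $\{p\ge b\}$. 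Two small points to tidy up: you should also verify $f_\ell\in L^{1,+}_{\rm loc}(\RR)$ in the type (iii) case (immediate, since $\int_{-\infty}^q(\frI^{(d-\ell)/2}f)(p)\,\dd p\propto(\frI^{(d-\ell)/2+1}f)(q)<\infty$ by the semigroup property and Lemma \ref{lm:technical}); and your reduction of local integrability in the type (ii) case to $f_{(d-\ell)/2}\in L^{1,+}_{\rm loc}([0,\infty))$ works but is more circuitous than necessary — the same one-line semigroup bound $\int_{-\infty}^q(\frI^{(d-\ell)/2}f)(p)\,\dd p\propto(\frI^{(d-\ell)/2+1}f)(q)<\infty$ for $q<b$, which is what the paper uses, settles it directly.
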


\begin{proof}
    We start by recalling the construction of Laguerre tessellations introduced in Section \ref{sec:polyhedral}. According to this construction $\cL_{d,\gamma}(f)$ appears as a vertical projection of the boundary of convex closed set $P(\eta_{f,\gamma})$ defined by \eqref{def:polyhedralset}, where 
    \[
        \varphi\colon  \RR^d\times\RR \to \RR^d\times\RR, \quad (v,h) \mapsto (v, \|v\|^2+h).
    \]
    Note that under $\varphi$ points $(v,0)$ are mapped to the standard upward paraboloid $\Pi^+$, while for $h>0$ and $h'<0$ points $(v,h)$ and $(v,h')$ are mapped to $(\Pi^{+})^\uparrow$ and $(\Pi^{+})^\downarrow$, respectively. By the mapping theorem for Poisson point processes \cite[Theorem 5.1]{LP} the process $\varphi(\eta_{f,\gamma})$ is a Poisson point process on $\RR^d \times \RR$ with intensity measure $\Lambda$, given by
    \begin{align*}
        \Lambda(B\times (-\infty,s)) &= \gamma\int_{\RR^d} \int_{E} f(h) {\bf 1}(\varphi(v,h) \in B\times (-\infty,s)) \dd h \dd v\\
        &= \gamma\int_{\RR^d} \int_{\RR} f(h) {\bf 1}(v \in B) {\bf 1}(h+\|v\|^2 < s) \dd h \dd v\\
        &= \gamma\int_{\RR^d} \int_{\RR} f(h-\|v\|^2){\bf 1}((v,h) \in B\times (-\infty,s))\dd h \dd v,
    \end{align*}
    for any Borel set $B\subset\RR^d$ and $s\in\RR$. As described in Section \ref{sec:polyhedral} the cells of $\cL_d(\eta_{f,\gamma})$ are obtained as orthogonal projections of the facets of $P(\eta_{f,\gamma})$ onto $\RR^d$. According to \eqref{def:polyhedralset} the set $P(\eta_{f,\gamma})$ can also be defined as the intersection of the epigraphs of all polar hyperplanes $x^\circ$, where $x \in \varphi(\eta_{f,\gamma})$, namely
    \[
        P(\eta_{f,\gamma}) = \bigcap_{x \in \varphi(\eta_{f,\gamma})} (x^{\circ})^\uparrow,
    \]
    where we recall, that
    \begin{align*}
        x^\circ &=\{(y,y_{d+1})\in \RR^d \times \RR: (x_1,\dots , x_{d+1}, 1) \Delta_{\Pi^+} (y_1, \dots, y_{d+1}, 1)^T = 0\}\\
        &=\big\{(y,y_{d+1})\in \RR^d \times \RR: y_{d+1} = 2 \sum_{i=1}^{d} x_i y_i - x_{d+1} \big\}.        
    \end{align*} 
    
    We note that it is enough to consider the case $\ell=d-1$ since the statement for general $\ell$ will follow by induction. Let $L$ be a hyperplane in $\RR^{d}$. By stationarity and isotropy of the random tessellation $\cL_{d,\gamma}(f)$ we will assume without loss of generality that $L\cong\RR^{d-1}$ is the linear subspace of $\RR^d$ spanned by the first $d-1$ standard orthonormal vectors of $\RR^d$, i.e $L = \{(y_1,\dots,y_{d-1},y_d) \in \RR^d: y_d=0\}$. We extend the hyperplane $L$ by adding the height coordinate and define $L' := \{(y_1,\dots,y_{d-1},y_d,h) \in \RR^d\times\RR: y_d=0\}$. The intersection of the Laguerre tessellation $\cL_{d,\gamma}(f)$ with the hyperplane $L$ can be obtained by applying vertical projection to the facets ($(d-1)$-dimensional faces) of the $d$-dimensional polyhedral set
    \begin{equation}\label{eq:28.05.24_10}
        P(\eta_{f,\gamma})\cap{L'}:=\bigcap_{x\in\varphi(\eta_{f,\gamma})}(x^\circ \cap L')^\uparrow
    \end{equation} 
    to $\RR^{d-1}$, namely,
    \begin{equation}\label{eq:28.05.24_11}
        \cL_{d,\gamma}(f)\cap L=\{\proj_{\RR^{d-1}} F\colon F\text{ is facet of }P(\eta_{f,\gamma})\cap{L'}\}.
    \end{equation}
    Consider the linear transformation 
    \[
        \proj_{L'}\colon \RR^d\times \RR \to L', (y_1,\dots,y_{d-1},y_d,h)\mapsto (y_1,\dots,y_{d-1},0,h).
    \]
    We first show, that for all $x\in\RR^d$ the intersection of the $d$-dimensional hyperplane $x^\circ$ with $L'$ coincides with the $(d-1)$-dimensional hyperplane, which is a polar hyperplane of the point $\proj_{L'}(x)\in L'$ with respect to the quadric $(\Pi^+)':=\Pi^+ \cap L'$ in $L'\cong \RR^d$. Indeed, since for any $x\in\RR^{d+1}$ it holds that
    \[
        [\proj_{L'}(x)]^\circ_{(\Pi^+)'}=\big\{(y_1,\dots,y_{d-1},0,y_{d+1})\in\RR^{d+1}:y_{d+1}=\sum_{i=1}^{d-1}x_iy_i-x_{d+1}\big\},
    \]
    we obtain
    \begin{align*}        
        x^\circ_{\Pi^+} \cap L' 
        &=\big\{(y_1,\dots,y_{d-1},y_d,y_{d+1})\in \RR^d\times\RR:y_{d+1}=2\sum_{i=1}^{d}x_i y_i - x_{d+1},\, y_d=0\big\}\\
        &=\big\{(y_1,\dots,y_{d-1},0,y_{d+1})\in \RR^d\times\RR:y_{d+1}=2\sum_{i=1}^{d-1}x_i y_i - x_{d+1}\big\}=[\proj_{L'}(x)]^\circ_{(\Pi^+)'}.
    \end{align*}
    Hence, combining this with \eqref{eq:28.05.24_10} we get
    \begin{equation}\label{eq:28.05.24_12}
        P(\eta_{f,\gamma})\cap{L'}=\bigcap_{x\in\varphi(\eta_{f,\gamma})}([\proj_{L'}(x)]^{\circ}_{(\Pi^+)'})^{\uparrow}= \bigcap_{y\in\proj_{L'}(\varphi(\eta_{f,\gamma}))}(y^{\circ}_{(\Pi^+)'})^{\uparrow}.
    \end{equation}
    By the mapping theorem for Poisson point processes \cite[Theorem 5.1]{LP} we have that $\proj_{L'}(\varphi(\eta_{f,\gamma}))$ is a Poisson point process in $L'$ with intensity measure $\Lambda'$ defined as follows, for any Borel set $B \subseteq L$ and $s \in \RR$ we have
    \begin{align*}
        \Lambda'(B\times (-\infty, s))&= \gamma\int_{\RR^d} \int_{\RR} f(h-\|v\|^2) {\bf 1}(\proj_{L'}(v,h) \in B \times (-\infty,s)) \dd h \dd v\\
        &= \gamma\int_{\RR} \int_{\RR^{d-1}} {\bf 1}(v' \in B) {\bf 1}(h< s) \int_{\RR} f(h-\|v'\|^2-v_d^2)\dd v_d \dd v' \dd h,
    \end{align*}
    where we denote by $v':=(v_1,\dots,v_{d-1}) \in \RR^{d-1}\cong L$. Further we note that 
    \begin{align*}
        \int_{\RR} f(h-\|v'\|^2-v_d^2)\dd v_d&= \int_{-\infty}^{h-\|v'\|^2} f(y)(h-\|v'\|^2-y)^{-\frac{1}{2}}\dd y= \sqrt{\pi}(\frI^{1\over 2}f)(h-\|v'\|^2),
    \end{align*}
    and, hence, 
    \begin{equation}\label{eq:28.05.24_13}
     \Lambda'(B\times (-\infty, s))= \gamma\sqrt{\pi}\int_{\RR} \int_{\RR^{d-1}} {\bf 1}(v' \in B) {\bf 1}(h< s) (\frI^{1\over 2}f)(h-\|v'\|^2) \dd v' \dd h.
    \end{equation}
    
    Now define 
    \begin{align*}
        f_{d-1}(p):= 
        \begin{cases}
            \sqrt{\pi} (\frI^{1\over 2}f)(p), &\text{if } (\frI^{\frac{1}{2}}f)(p)<\infty\\
            0, &\text{otherwise}.
        \end{cases}
    \end{align*}
    
    We directly note that $f_{d-1}$ is clearly a non-negative function. As a next step we need to ensure that if $f$ is admissible (with dimension $d$) then $f_{d-1}$ is admissible (with dimension $d-1$). We will consider the cases for the interval $E$ of type (i), (ii) and (iii) separately.

    \medskip

    \textbf{Interval of type (i):} In this case we note that since $f\in L_{{\rm loc}}^{1,+}([a,\infty))$ by \cite[Lemma 2.1]{MARTINEZ1992111} in combination with \eqref{eq:FractionalIntegralShift} we have that $(\frI^{1/2}f)(x)<\infty$ for almost every $x\in [a,\infty)$ and,  moreover, $(\frI^{1/2}f)(p)=0$ for any $p\in (-\infty,a]$, meaning that $f_{d-1}=\sqrt{\pi} (\frI^{1/2}f)$ and the support of $f_{d-1}$ is a subset of $[a,\infty)$. By \cite[Lemma 2.1]{MARTINEZ1992111} we also have $f_{d-1}\in L_{{\rm loc}}^{1,+}([a,\infty))$ and by Proposition \ref{prop:FICondition2} we have that $f_{d-1}$ satisfies \hyperref[item:F1]{(F1)} and, hence, is admissible by Definition \ref{def:admissible}.

    \medskip

    \textbf{Interval of type (iii):} Let $f\in L_{\rm loc}^{1,+}(\RR)$ be admissible. Then by the semigroup property \eqref{eq:SemigroupProp}, condition \hyperref[item:F1]{(F1)} and Lemma \ref{lm:technical} we have
    \begin{align*}
        \int_{-\infty}^q(\frI^{1\over 2}f)(p)\dint p=(\frI^{3\over 2}f)(q)<\infty
    \end{align*}
    for any $q\in \RR$. This in particular implies that $(\frI^{1/2}f)(x)<\infty$ for almost every $x\in\RR$ and $f_{d-1}=\sqrt{\pi}\frI^{1/2}f\in L^{1,+}_{\rm loc}(\RR)$. Further by the semigroup property \eqref{eq:SemigroupProp} and condition \hyperref[item:F1]{(F1)} for $f$ we get
    \[
        (\frI^{{d-1\over 2}+1}f_{d-1})(t)=\sqrt{\pi}(\frI^{{d+1\over 2}}\frI^{{1\over 2}}f)(t)=\sqrt{\pi}(\frI^{{d\over 2}+1}f)(t)<\infty
    \]
    for any $t\in\RR$ and $f_{d-1}$ is admissible with dimension $d-1$.

    \medskip

    \textbf{Interval of type (ii):} Finally we consider the case of an admissible function $f\in L_{\rm loc}^{1,+}((-\infty,b))$, where $b\in\RR$. As in the previous case for any $q<b$ we obtain
    $$
        \int_{-\infty}^q(\frI^{1\over 2}f)(p)\dint p=(\frI^{3\over 2}f)(q)<\infty,
    $$
    meaning that $\frI^{1/2}f$ is locally integrable on $(-\infty,b)$ and, in particular, $(\frI^{1/2}f)(x)<\infty$ for almost every $x\in (-\infty,b)$. On the other hand we note, that for any $p\ge b$ we have
    $$
        \Gamma\Big({1\over 2}\Big)(\frI^{1\over 2}f)(p)=\int_{-\infty}^pf(t)(p-t)^{-{1\over 2}}\dint t\ge \int_{b-1}^bf(t)(p-t)^{-{1\over 2}}\dint t \ge (p-b+1)^{-{1\over 2}}\int_{b-1}^bf(t)\dint t=\infty,
    $$
    since $f$ is not locally integrable on $(-\infty,b]$ (see Remark \ref{rem:NotLocInt}). The latter implies $f_{d-1}(p)=\sqrt{\pi}(\frI^{1/2}f)(p){\bf 1}(p<b)$, which is locally integrable on $(-\infty, b)$. By the semigroup property \eqref{eq:SemigroupProp} we have that 
    \[
        (\frI^{{d-1\over 2}+1}f_{d-1})(t)=\sqrt{\pi}(\frI^{{d\over 2}+1}f)(t)
    \]
    for any $t\in(-\infty,b)$, which implies that $f_{d-1}$ is admissible with dimension $d-1$ since $f$ is admissible with dimension $d$ (see Definition \ref{def:admissible}).

    \medskip

    By Theorem \ref{thm:FICondition} we conclude that in all three cases $\cL_{d-1,\gamma}(f_{d-1})$ is almost surely a normal random tessellation. By combining \eqref{eq:28.05.24_10}, \eqref{eq:28.05.24_11}, \eqref{eq:28.05.24_12} and \eqref{eq:28.05.24_13} we also have
    $$
        \cL_{d,\gamma}(f)\cap L\overset{d}{=}\cL_{d-1,\gamma}(\sqrt{\pi} (\frI^{1\over 2}f))\overset{d}{=}\cL_{d-1,\gamma}(f_{d-1}),
    $$
    in the case of an interval $E$ of type (i) and (iii). For the interval (ii) we still need to ensure that $\cL_{d-1,\gamma}(\sqrt{\pi} (\frI^{1/2}f))\overset{d}{=}\cL_{d-1,\gamma}(f_{d-1})$ since $\sqrt{\pi} (\frI^{1/2}f)\neq f_{d-1}$ on $\RR$. On the other hand $\sqrt{\pi} (\frI^{1/2}f)\colon \RR\to\overline{\RR}_+$ is a generalized function obtained from $f_{d-1}$ by setting 
    $$
        \sqrt{\pi} (\frI^{1/2}f)(p)=\begin{cases}
            f_{d-1}(p),\qquad &\text{ if } p<b,\\
            \infty,\qquad &\text{ otherwise}.
        \end{cases}
    $$
    Thus, by Proposition \ref{prop:trunc} we get $\cL_{d-1,\gamma}(\sqrt{\pi} (\frI^{1/2}f))\overset{d}{=}\cL_{d-1,\gamma}(f_{d-1})$, which concludes the proof for $\ell=d-1$.

    The case for general $\ell\in \{1,\dots,d-1\}$ follows now by induction taking into account that according to the semigroup property \eqref{eq:SemigroupProp} for any $1\leq \ell\leq d-2$ we have
    $$
        f_{\ell}=\sqrt{\pi}(\frI^{1\over 2}f_{\ell+1})=\sqrt{\pi}\big(\frI^{1\over 2}\big[\pi^{d-\ell-1\over 2}(\frI^{d-\ell-1\over 2}f)\big]\big)=\pi^{d-\ell\over 2}(\frI^{d-\ell\over 2}f).
    $$

\end{proof}

Let us now ensure that \cite[Theorem~4.1]{sectional} is indeed a corollary of Theorem \ref{thm:sectional}. Consider the tessellations $\cV_{d,\beta,\gamma}=\cL_{d,\gamma}(f_{d,\beta})$, $\cV'_{d,\beta,\gamma}=\cL_{d,\gamma}(f'_{d,\beta})$ and $\cG_{d,\lambda,\gamma}=\cL_{d,\gamma}(\widetilde f_{\lambda})$, defined in Example \ref{ex:BetaModels}.

\medskip

\textbf{$\beta$-model:} Note that for any $\alpha>0$, $\beta>-1$ and $t>0$ by \cite[Property 2.5(a)]{TAFDE} we have 
$$
    (\frI^{\alpha} h^{\beta})(t)={\Gamma(\beta+1)\over \Gamma(\beta+1+\alpha)}t^{\beta+\alpha}.
$$
Then for any $1\leq \ell\leq d-1$ and any $p>0$ we get
\begin{align*}
    \pi^{d-\ell\over 2}(\frI^{d-\ell\over 2}f_{d,\beta})(p) &= c_{d+1,\beta} \frac{\pi^{\frac{d-\ell}{2}}\Gamma\left(\beta + 1\right)}{\Gamma\left(\beta+1+\frac{d-\ell}{2}\right)}\, p^{\beta+\frac{d-\ell}{2}}= c_{\ell+1,\beta+\frac{d-\ell}{2}}\, p^{\beta+\frac{d-\ell}{2}}=f_{\ell,\beta+\frac{d-\ell}{2}}(p).
\end{align*}
Hence, by Theorem \ref{thm:sectional} for any $L\in A(d,\ell)$, $\beta>-1$ and $\gamma>0$ we conclude
\[
    (\cV_{d,\beta,\gamma}\cap L)\overset{d}{=}\cL_{\ell,\gamma}\big(f_{\ell,\beta+\frac{d-\ell}{2}}\big)=\cV_{\ell,\beta+\frac{d-\ell}{2},\gamma}.
\]

\medskip

\textbf{$\beta'$-model:} This case corresponds to the interval of type (ii) with $b=0$. In Example \ref{ex:BetaModels} we have shown that for any $\beta>{d\over 2}+1$ the function $f'_{d,\beta}$ is admissible. Further for any $p\ge 0$ and $1\leq \ell\leq d-1$ we have
\[
    (\frI^{d-\ell\over 2}f'_{d,\beta})(p)={c'_{d+1,\beta}\over \Gamma\big({d-\ell\over 2}\big)}\int_{-\infty}^p (-t)^{-\beta} (p-t)^{\frac{d-\ell}{2}-1} \dd t = \infty.
\]
On the other hand for $p<0$ by \cite[Property 2.5(b)]{TAFDE} we have 
\begin{align*}
    \pi^{d-\ell\over 2}(\frI^{d-\ell\over 2}f'_{d,\beta})(p)
    &= \pi^{\frac{d-\ell}{2}} c_{d+1,\beta}'\frac{\Gamma\left(\beta - \frac{d-\ell}{2}\right)}{\Gamma\left(\beta\right)}(-p)^{-(\beta-\frac{d-\ell}{2})}\\
    &= c_{\ell+1,\beta-\frac{d-\ell}{2}}' (-p)^{-(\beta-\frac{d-\ell}{2})}=f'_{\ell,\beta-{d-\ell\over 2}}(p),
\end{align*}
and by Theorem \ref{thm:sectional} for any $L\in A(d,\ell)$, $\beta>{d\over 2}+1$ and $\gamma>0$ we conclude
\[
    (\cV'_{d,\beta,\gamma}\cap L)\overset{d}{=}\cL_{\ell,\gamma}\big(f'_{\ell,\beta-\frac{d-\ell}{2}}\big)=\cV'_{\ell,\beta-\frac{d-\ell}{2},\gamma}.
\]

\medskip

\textbf{Gaussian-model:} For any $\lambda>0$ and $p \in \RR$ by \cite[Property 2.11(a)]{TAFDE} we have
\begin{align*}
      \pi^{d-\ell\over 2}(\frI^{d-\ell\over 2}\widetilde f_{\lambda})(p)= (\pi/\lambda)^{\frac{d-\ell}{2}} e^{\lambda p}=e^{\lambda p+{d-\ell\over 2}\log(\pi/\lambda)}.
\end{align*}
Further note, that $\pi^{d-\ell\over 2}(\frI^{d-\ell\over 2}\widetilde f_{\lambda})=\widetilde f_{\lambda}\circ \varphi$, where $\varphi(x)=x+{d-\ell\over 2\lambda}\log(\pi/\lambda)$ and by Proposition \ref{prop:compwlin} and Theorem \ref{thm:sectional} we get for $L\in A(d,\ell)$, $\lambda>0$ and $\gamma>0$ that
$$
    (\cG_{d,\lambda,\gamma}\cap L)\overset{d}{=}\cL_{\ell,\gamma}(\widetilde f_{\lambda}\circ \varphi)\overset{d}{=}\cL_{\ell,\gamma}(\widetilde f_{\lambda})=\cG_{\ell,\lambda,\gamma}.
$$

\begin{remark}
    As shown above 
    in the case of the Gaussian-Voronoi tessellation $\cG_{d,\gamma,\lambda}=\cL_{d,\gamma}(\widetilde f_{\lambda})$ the function $\widetilde f_{\lambda}$ is a solution of the following integral equation
    \[
       (\frI^{\frac{1}{2}}f)(p)={1\over \sqrt{\pi}}\int_{-\infty}^p(p-t)^{-{1\over 2}}f(t)\dint t=\lambda^{-{1\over 2}}f(p),
    \]
    which is Volterra integral equation of the second kind.
\end{remark}

\section{Typical cell of dual Poisson-Laguerre tessellation}\label{sec:typcell}

In this section we consider the dual model $\cL^*_{d,\gamma}(f)=\cL^*_d(\eta_{f,\gamma})$ of the random Laguerre tessellation $\cL_{d,\gamma}(f)$ with $f$ being admissible. In this case according to Theorem \ref{thm:FICondition} $\cL_{d,\gamma}(f)$ is an almost surely normal random tessellation and, hence, its dual $\cL^*_{d,\gamma}(f)$ is an almost surely simplicial random tessellation as described in Section \ref{sec:DualModel}. Recall (see Section \ref{sec:DualModel}, Equation \eqref{eq:Astar}) that in this setting the tessellation $\cL^*_{d,\gamma}(f)$ is itself a random Laguerre tessellation constructed using the point process defined as
\[
    \eta_{f,\gamma}^*(\omega):=\begin{cases}
        \sum_{z\in\cF_0(\cL_d(\eta_{f,\gamma}(\omega)))}\delta_{(z,-K_z)},\qquad &\text{if }\cL_d(\eta_{f,\gamma}(\omega))\text{ is normal},\\
        \delta_0,\qquad&\text{otherwise},
    \end{cases}
\]
for any $\omega\in\Omega$, where given a vertex $z\in\cF_0(\cL_d(\eta_{f,\gamma}(\omega)))$ the value $K_z$ is the unique number such that there are exactly $d+1$ distinct points $x_1,\ldots,x_{d+1}\in (\eta_{f,\gamma}\cap \Pi_{(z,K_z)})$ and $\eta_{f,\gamma}\cap \inter \Pi_{(z,K_z)}^{\downarrow}=\emptyset$. Further note that since $\eta_{f,\gamma}$ satisfies \hyperref[item:P4]{(P4)} almost surely, we have that almost surely for any distinct points $x_1,\ldots,x_{d+1}$ of $\eta_{f,\gamma}$ there is a unique downward paraboloid $\Pi(x_1,\ldots,x_{d+1})$ containing them. This leads to the following alternative representation
\begin{equation}\label{eq:DualProcess}
\eta_{f,\gamma}^*={1\over (d+1)!}\sum_{(x_1,\ldots,x_{d+1})\in (\eta_{f,\gamma})_{\neq}^{d+1}}\delta_{{\rm Ref}(\apex \Pi(x_1,\ldots,x_{d+1}))}{\bf 1}(\eta_{f,\gamma}\cap \inter\Pi^{\downarrow}(x_1,\ldots,x_{d+1})=\emptyset).
\end{equation}

\subsection{Definition of the (volume weighted) typical cell}

Since the point process $\eta_{f,\gamma}$ is stationary in the spatial component and the height coordinate $h$ of a point $(v,h)\in \eta_{f,\gamma}$ does not depend on the location $v$, the tessellation $\cL^*_{d,\gamma}(f)$ is a stationary random tessellation in $\RR^d$. We denote by $\cC'$ the space of non-empty compact subsets of $\RR^d$ equipped with the Fell topology and the corresponding Borel $\sigma$-algebra $\cB(\cC')$. Under the additional assumption that
\begin{equation}\label{eq:IntensityFiniteness}
    \alpha(f,\gamma):=\EE \sum_{(v,h)\in \eta^*_{f,\gamma}}{{\bf 1}}(v\in[0,1]^d)\in (0,\infty),
\end{equation}
the process $\eta_{f,\gamma}^*$ is a stationary marked point process in $\RR^d$ (in the sense of \cite[Definition 3.5.1]{SW}) with mark space $\RR$. The stationarity in this case means that 
$$
    \theta_x(\eta_{f,\gamma}^*):=\sum_{(v,h)\in \eta_{f,\gamma}^*}\delta_{(v-x,h)}\overset{d}{=}\eta_{f,\gamma}^*
$$
for all $x\in\RR^d$, where $\theta_x:(v,h)\mapsto (v-x,h)$. According to \cite[Theorem 3.5.2]{SW} the Palm measure $\PP^0$ of $\eta_{f,\gamma}^*$ is given by
$$
    \PP^0(A\times B)={1\over \alpha(f,\gamma)}\EE\sum_{(v,h)\in \eta^*_{f,\gamma}}{{\bf 1}}(h\in A){\bf 1}(v\in[0,1]^d){\bf 1} (\theta_v(\eta_{f,\gamma}^*)\in B),
$$
where $A\in \cB(\RR),\, B\in \cN(\RR^d\times \RR)$. Let $\eta^{*,0}_{f,\gamma}$ be a point process with distribution $\PP^0(\RR\times \cdot)$. In particular almost surely
$\eta^{*,0}_{f,\gamma}$ contains an atom of the form $x_0=(0,h')$ for some $h'\in \RR$. Then we define \textit{the typical cell} $Z_{d,\gamma}(f)$ of $\cL^*_{d,\gamma}(f)$ as the Laguerre cell of $x_0$ with respect to the process $\eta^{*,0}_{f,\gamma}$. In other words the typical cell $Z_{d,\gamma}(f)$ is a random polytope with distribution
\[
  \PP_{f,\gamma}(A):=\frac{1}{\alpha(f,\gamma)} \EE \sum_{(v,h)\in \eta^*_{f,\gamma}} {\bf 1}(v\in[0,1]^d){\bf 1}(C((v,h),\eta^*_{f,\gamma})-v\in A),  \qquad A\in \cB(\cC').
\] 
It should be noted that under \eqref{eq:IntensityFiniteness} the random tessellation $\cL^*_{d,\gamma}(f)$ can be identified with a stationary particle process $X_f:=\sum_{t\in \cL^*_{d,\gamma}(f)}\delta_t$. In this case the typical cell $Z_{d,\gamma}(f)$ has the same distribution (up to translation) as the typical grain of $X_f$ (see \cite[Section 4.1 - 4.2]{SW}).

Motivated by the definition of the typical cell we introduce a slightly more general concept, namely \textit{the $\nu$-weighted typical cell}. For a given $\nu\in\RR$ we define a probability measure $\PP_{f,\gamma,\nu}$ on $\cC'$ as 
\begin{equation*}
    \PP_{f,\gamma,\nu}(A) := {1\over \alpha(f,\gamma,\nu)}\EE\sum_{(v,h)\in \eta^*_{f,\gamma}}{\bf 1}(v\in[0,1]^d){\bf 1}(C((v,h),\eta^*_{f,\gamma})-v\in A)\Vol(C((v,h),\eta^*_{f,\gamma}))^\nu
\end{equation*}
for $A\in  \cB(\cC')$, where  $\alpha(f,\gamma,\nu)$ is the normalizing constant given by
\begin{equation*}
    \alpha(f,\gamma,\nu):= \EE \sum_{(v,h)\in \eta^*_{f,\gamma}}{{\bf 1}}(v\in[0,1]^d)\Vol(C((v,h),\eta^*_{f,\gamma}))^\nu=\alpha(f,\gamma)\EE[\Vol(Z_{d,\gamma}(f))^\nu],
\end{equation*}
with additional assumption that $\alpha(f,\gamma,\nu)\in (0,\infty)$. Let $Z_{d,\gamma,\nu}(f)$ be the random polytope with distribution $\PP_{f,\gamma,\nu}$. We note that $\PP_{f,\gamma}=\PP_{f,\gamma,0}$ and, thus $Z_{d,\gamma,0}(f)$ has the same distribution as the typical cell of $\cL^*_{d,\gamma}(f)$. At the same time it is known \cite[Theorem 10.4.1 and Equation (10.4)]{SW} that the volume-weighted version $Z_{d,\gamma,1}(f)$ of the typical cell has the same distribution as the zero-cell of $\cL^*_{d,\gamma}(f)$, which is almost surely the unique cell of $\cL^*_{d,\gamma}(f)$ containing the origin. 

\subsection{Main results: stochastic representation for the typical cell}

The aim of this section is to find a good description of the distribution $\PP_{f,\gamma,\nu}$ in terms of the function $f$. The case when $f\colon\RR\to \RR_+$ satisfying $f(x)=0$ for all $x>0$, and $f\in L^1(\RR)$ (see Example \ref{ex:IndQMarking}) has been considered in \cite{LZ08, Ldoc}, where the exact description of the distribution $\PP_{f,\gamma}$ has been obtained. In \cite[Theorem 4.5]{GKT20} and \cite[Theorem 5.1]{GKT21} an explicit representation of the distribution of $Z_{d,\gamma,\nu}(f)$, for the cases $f=f_{d,\beta}$, $f=f'_{d,\beta}$ and $f=\widetilde f_{\lambda}$, defined by \eqref{eq:BetaModel}, \eqref{eq:BetaPrimeModel} and \eqref{eq:GaussianModel}, respectively, has been obtained for $\nu\ge -1$ and additionally $\nu<2\beta-d$ in the $\beta'$-case. More precisely, for these three cases the following relation holds
\begin{equation}\label{eq:TypicalCellRepresentation}
    Z_{d,\gamma,\nu}(f)\overset{d}{=}\conv(RX_1,\ldots,RX_{d+1}),
\end{equation}
where
\begin{enumerate}
\item[(a)] $R$ is a non-negative random variable whose density is proportional to 
    \begin{align*}
        \beta\text{-model}:\quad r^{(d+1)^2+\nu d+2(d+1)\beta}e^{-\gamma\,c_{d+1,\beta}(\pi c_{d+2,\beta})^{-1}r^{d+2+2\beta}},\\
        \beta'\text{-model}:\quad r^{(d+1)^2+\nu d-2(d+1)\beta}e^{-\gamma\,c'_{d+1,\beta}(\pi c'_{d+2,\beta})^{-1}r^{d+2-2\beta}} ,
    \end{align*}
    and $R\equiv 1$ for the Gaussian model;
\item[(b)] $(X_1,\ldots,X_{d+1})$ are random points in $\RR^d$ whose joint density is proportional to
    \begin{equation}\label{eq:JointDensityX}
        \Delta_{d}(x_1,\ldots,x_{d+1})^{\nu+1}   \prod\limits_{i=1}^{d+1}u(x_i),
    \end{equation}
    with $u(x)=(1-\|x\|^2)^{\beta}{\bf 1}(\|x\|\leq 1)$ in the case of the $\beta$-model, $u(x)=(1+\|x\|^2)^{-\beta}$ for the $\beta'$-model and $u(x)=e^{-\lambda\|x\|^2}$ for the Gaussian model;
\item[(c)] $R$ is independent of $(X_1,\ldots,X_{d+1})$.
\end{enumerate}
It should be noted that in \cite[Theorem 5.1]{GKT21} only the case $\lambda=1/2$ for the Gaussian model $\widetilde f_{\lambda}$ has been considered, but the results generalise straightforward to the case of general $\lambda>0$.

For a general admissible function $f$ we cannot hope to have a representation of the form \eqref{eq:TypicalCellRepresentation} with independent components $R$ and $(X_1,\ldots,X_{d+1})$. On the other hand under some additional integrability conditions on the function $f$ we can still provide a representation of the form 
\[
    Z_{d,\gamma,\nu}(f)\overset{d}{=}\conv(Y_1,\ldots,Y_{d+1}),
\]
where $Y_1,\ldots,Y_{d+1}$ are random points in $\RR^d$, whose joint distribution is described explicitly in terms of fractional integrals and derivatives of $f$. This will be the main result of this section (see Theorem \ref{thm:typcell} below).

Before we can state our main result we need to introduce some additional notations. Let $E$ be some interval of the type (i), (ii) or (iii) and let $f\in L_{\rm loc}^{1,+}(E)$. For any $p\in \inter E$, $d\ge 1$ and $\alpha\in \RR$ we define
\begin{align*}
    &K_{d,f}^\alpha(p):=\int_{(\RR^d)^{d+1}} \Delta_d(x_1,\dots,x_{d+1})^\alpha \prod_{i=1}^{d+1} f(p-\|x_i\|^2)\dd x_1\dots \dd x_{d+1},\\
    &J_{d,f}^\alpha(p):=\int_{(\RR^d)^{d}} \nabla_d(x_1,\dots,x_d)^\alpha \prod_{i=1}^{d}f(p-\|x_i\|^2)\dd x_1\dots \dd x_d.
\end{align*}

\begin{theorem}\label{thm:typcell}
    Let $d\ge 2$ and let $f$ be admissible. Let $\nu\in \RR$ be such that
    \begin{equation}\label{eq:FiniteNormalizationConst}
        \int_{E}\exp\big(-\gamma\pi^{\frac{d}{2}}(\frI^{\frac{d}{2}+1}f)(p)\big) K^{\nu+1}_{d,f}(p)\dd p\in (0,\infty),
    \end{equation}
    where $\frI^{\frac{d}{2}+1}f$ is defined by \eqref{eq:Frac_Int}. Then
    \begin{align*}
        \PP_{f,\gamma,\nu}(\cdot)&=\frac{{(2\gamma)^{d+1}}}{(d+1)\alpha(f,\gamma,\nu)} \int_{(\RR^{d})^{d+1}}\dd y_1\dots \dd y_{d+1} \, \int_{E}\dd p\,{\bf 1}(\conv(y_1,\dots,y_{d+1})\in \cdot) \\
        &\qquad\times \exp\big(-\gamma\pi^\frac{d}{2} (\frI^{\frac{d}{2}+1}f)(p)\big) \Delta_{d}(y_1,\dots,y_{d+1})^{\nu+1}\prod_{i=1}^{d+1}f(p-\|y_i\|^2),
    \end{align*}
    where
   \begin{align}
        \alpha(f,\gamma,\nu)&=\frac{{(2\gamma)^{d+1}}}{d+1}\int_{E}\exp\big(-\gamma\pi^{\frac{d}{2}}(\frI^{\frac{d}{2}+1}f)(p)\big) K^{\nu+1}_{d,f}(p)\dd p.
        \label{eq:BoundNormalizationConst}
    \end{align}
    Moreover if we additionally assume $(\frI^{\frac{d}{2}+\lceil\frac{\nu+1}{2}\rceil}f)(p)<\infty$ for any $p\in E$ we have
    \begin{align*}
        \alpha(f,\gamma,\nu)&=\frac{{(2\gamma)^{d+1}}\pi^{{d(d+1)\over 2}}}{(d+1)(d!)^{\nu+1}} \prod_{k=1}^{d}\frac{\Gamma\left(\frac{\nu+1+k}{2}\right)}{\Gamma(\frac{k}{2})}\int_{E}e^{-\gamma\pi^{\frac{d}{2}}(\frI^{\frac{d}{2}+1}f)(p)} \Big(\frD^{\frac{\nu+1}{2}}\big[\big(\frI^{\frac{d+\nu+1}{2}}f\big)^{d+1}\big]\Big)(p) \dd p.
    \end{align*}
\end{theorem}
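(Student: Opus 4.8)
The plan is to compute the defining Palm expectation for $\PP_{f,\gamma,\nu}$ directly by unfolding the explicit representation \eqref{eq:DualProcess} of the dual point process $\eta^*_{f,\gamma}$ and applying the multivariate Mecke formula. Starting from
\[
    \alpha(f,\gamma,\nu)\,\PP_{f,\gamma,\nu}(A)=\EE\sum_{(v,h)\in\eta^*_{f,\gamma}}{\bf 1}(v\in[0,1]^d){\bf 1}(C((v,h),\eta^*_{f,\gamma})-v\in A)\,\Vol(C((v,h),\eta^*_{f,\gamma}))^\nu,
\]
I would substitute the sum over vertices of $\cL_d(\eta_{f,\gamma})$, i.e.\ over $(d+1)$-tuples $(x_1,\dots,x_{d+1})\in(\eta_{f,\gamma})^{d+1}_{\neq}$ lying on an empty downward paraboloid $\Pi(x_1,\dots,x_{d+1})$. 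Here the point $(v,h)\in\eta^*_{f,\gamma}$ is ${\rm Ref}(\apex\Pi(x_1,\dots,x_{d+1}))=(z,-K_z)$, its spatial coordinate $v=z$ is the apex location, and $C((v,h),\eta^*_{f,\gamma})$ is the Delaunay simplex $\conv(v_1,\dots,v_{d+1})$ where $x_i=(v_i,h_i)$. Applying the multivariate Mecke formula \cite[Corollary 3.2.3]{SW} to the sum over $(d+1)$-tuples converts the expectation into an integral over $(\RR^d\times\RR)^{d+1}$ against $\prod_i f(h_i)$, with the emptiness condition producing the void probability $\PP(\eta_{f,\gamma}(\inter\Pi^\downarrow_{(w,p)})=0)=\exp(-\gamma\pi^{d/2}(\frI^{d/2+1}f)(p))$ by Lemma \ref{lm:IntensityParabola}.

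\textbf{Key steps.}
After the Mecke step I would change variables via the map $\Psi$ of Lemma \ref{lm:jacobian}, writing each $x_i=(v_i,h_i)$ as $(w+y_i,\,p-\|y_i\|^2)$ where $(w,p)=\apex\Pi(x_1,\dots,x_{d+1})$ is the paraboloid apex and $y_i=v_i-w$. The Jacobian contributes the factor $2^{d+1}d!\,\Delta_d(y_1,\dots,y_{d+1})$, and the reflection ${\rm Ref}$ sends the apex $(w,p)$ to the marked point $(w,-p)$, so the spatial indicator ${\bf 1}(v\in[0,1]^d)$ becomes ${\bf 1}(w\in[0,1]^d)$, whose $w$-integral is $1$; the translation $C-v$ becomes $\conv(y_1,\dots,y_{d+1})$, which is independent of $w$. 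The Delaunay volume $\Vol(\conv(v_1,\dots,v_{d+1}))=\Delta_d(y_1,\dots,y_{d+1})$ raised to $\nu$ combines with the Jacobian's $\Delta_d$ to give the exponent $\nu+1$. Collecting terms and dividing by the $(d+1)!$ from \eqref{eq:DualProcess}—which cancels against the $(d+1)!$ overcounting of ordered tuples, leaving a single factor $1/(d+1)$ after absorbing one $d!$—yields the stated density formula and the integral \eqref{eq:BoundNormalizationConst} for $\alpha(f,\gamma,\nu)$ in terms of $K^{\nu+1}_{d,f}(p)$.

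\textbf{The fractional-calculus reformulation.}
For the final displayed formula for $\alpha(f,\gamma,\nu)$ the task is to evaluate $K^{\nu+1}_{d,f}(p)=\int_{(\RR^d)^{d+1}}\Delta_d(x_1,\dots,x_{d+1})^{\nu+1}\prod_i f(p-\|x_i\|^2)\,\dd x_i$ in closed form. I would use the classical Blaschke--Petkantschin-type factorization of the product of simplex volumes into a base-times-height structure: integrating out the radial variables $r_i=\|x_i\|$ against $f(p-r_i^2)$ produces a one-dimensional fractional integral of $f$, while the angular/shape integral over the sphere yields the product $\prod_{k=1}^{d}\Gamma(\tfrac{\nu+1+k}{2})/\Gamma(\tfrac k2)$ of Gamma ratios together with the $\pi^{d(d+1)/2}$ and $(d!)^{-(\nu+1)}$ constants. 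The emergence of the fractional \emph{derivative} $\frD^{(\nu+1)/2}$ acting on $(\frI^{(d+\nu+1)/2}f)^{d+1}$ comes from combining the semigroup property \eqref{eq:SemigroupProp} with the inversion relation \eqref{eq:DiffIntInverse}, under the extra integrability hypothesis $(\frI^{d/2+\lceil(\nu+1)/2\rceil}f)(p)<\infty$ that guarantees the derivative is well-defined. \textbf{The main obstacle} I anticipate is precisely this last identification: justifying the interchange of the radial integration with the fractional differentiation and verifying that the boundary/integrability conditions from Lemma \ref{lm:technical} hold so that $\frD^{(\nu+1)/2}$ genuinely inverts the corresponding integral, rather than merely rewriting $K^{\nu+1}_{d,f}$ as a product of $(d+1)$ separate fractional integrals. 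Care is also needed to confirm convergence of all $y_i$-integrals (the simplex-volume power times the decaying $f$) so that the Mecke/Fubini manipulations are valid, which is exactly where hypothesis \eqref{eq:FiniteNormalizationConst} enters.
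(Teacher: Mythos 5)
Your proposal matches the paper's proof essentially step for step: the paper likewise unfolds the Palm expectation via the representation \eqref{eq:DualProcess}, applies the multivariate Mecke formula, changes variables with the map $\Psi$ of Lemma \ref{lm:jacobian} (producing the Jacobian factor $2^{d+1}d!\,\Delta_d$ and the net constant $(2\gamma)^{d+1}/(d+1)$), integrates out $w$, and inserts the void probability from Lemma \ref{lm:IntensityParabola} to arrive at \eqref{eq:BoundNormalizationConst}. The only organizational difference is that the closed-form evaluation of $K^{\nu+1}_{d,f}$ you sketch (Blaschke--Petkantschin, radial integration, semigroup property, and the inversion \eqref{eq:DiffIntInverse} justified via the a priori bound on $\frI^{\lceil(\nu+1)/2\rceil}K^{\nu+1}_{d,f}$) is exactly the content of the paper's separate Theorem \ref{thm:kd_jd}, which the proof of Theorem \ref{thm:typcell} simply invokes.
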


The next theorem provides a simple representation of the functions $K_{d,f}^{\alpha}$ and $J_{d,f}^{\alpha}$ in terms of fractional integrals and derivatives of the function $f$.

\begin{theorem}\label{thm:kd_jd}
    Let $E$ be an interval of type (i), (ii) or (iii) and let $f\in L_{\rm loc}^{1,+}(E)$. Then for any $\alpha> -1$ and $d\ge 1$ we get
    \begin{equation}\label{eq:JFormula}
        J_{d,f}^\alpha(p)= \left(\prod_{k=1}^{d}\frac{\Gamma\left(\frac{\alpha+k}{2}\right)}{\Gamma(\frac{k}{2})}\right) \left(\pi^{\frac{d}{2}}(\frI^{\frac{d+\alpha}{2}}f)(p)\right)^d.
    \end{equation}
    Further for any $\alpha\ge 0$, $d\ge 2$ and $p\in\inter E$ we get
    \begin{equation}\label{eq:KInequality}
        K_{d,f}^\alpha(p)\leq {(d+1)^{\max(1,\alpha)}\over (d!)^{\alpha}}{\pi^{d(d+1)\over 2}\Gamma({d+\alpha\over 2})^d\over \Gamma({d\over 2})^d}\Big((\frI^{{d+\alpha\over 2}}f)(p)\Big)^d(\frI^{{d\over 2}}f)(p),
    \end{equation}
    and for any $\alpha\ge 0$, $d\ge 2$ such that $(\frI^{{d\over 2}+\lceil{\alpha\over 2}\rceil}f)(p)<\infty$ for any $p\in\inter E$, we have
    \begin{equation}\label{eq:KFormula}
        K_{d,f}^\alpha(p)=\frac{\pi^{\frac{(d+1)d}{2}}}{(d!)^\alpha} \left(\prod_{k=1}^{d}\frac{\Gamma\left(\frac{\alpha+k}{2}\right)}{\Gamma(\frac{k}{2})}\right)\Big(\frD^{\frac{\alpha}{2}}\big[(\frI^{\frac{d+\alpha}{2}}f)^{d+1}\big]\Big)(p)
    \end{equation}
    for almost all $p\in\inter E$. In particular the equality in \eqref{eq:KFormula} holds if $K_{d,f}^{\alpha}$ is continuous in $p$.
\end{theorem}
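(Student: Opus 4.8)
The plan is to treat both formulas with a single elementary tool: for any integer $k\ge 1$ and a suitable non-negative $g$, passing to polar coordinates and substituting $s=r^{2}$ gives
\[
  \int_{\RR^{k}} g\big(p-\|x\|^{2}\big)\,\dd x=\pi^{\frac{k}{2}}\big(\frI^{\frac{k}{2}}g\big)(p),
\]
which I will call $(\star)$; it is immediate from $\tfrac{\omega_{k}}{2}\Gamma(\tfrac{k}{2})=\pi^{k/2}$ and \eqref{eq:Frac_Int}. Combined with the base--height factorisation of parallelepiped volumes and the semigroup property \eqref{eq:SemigroupProp}, $(\star)$ already yields the formula for $J_{d,f}^{\alpha}$, and it reduces both statements about $K_{d,f}^{\alpha}$ to one Blaschke--Petkantschin reduction.

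\textbf{The $J$-formula.} For \eqref{eq:JFormula} I would prove the more general claim that for $m\le n$ points $x_{1},\dots,x_{m}\in\RR^{n}$ the integral of $\nabla_{m}(x_{1},\dots,x_{m})^{\alpha}\prod_{i}f(p-\|x_{i}\|^{2})$ over $(\RR^{n})^{m}$ equals $\pi^{mn/2}\big(\prod_{j=1}^{m}\Gamma(\tfrac{n-j+\alpha+1}{2})/\Gamma(\tfrac{n-j+1}{2})\big)\big((\frI^{(n+\alpha)/2}f)(p)\big)^{m}$, and then specialise to $n=m=d$. The argument peels one point at a time: writing $\nabla_{m}=\nabla_{m-1}(x_{1},\dots,x_{m-1})\cdot\dist(x_{m},H)$ with $H=\lin(x_{1},\dots,x_{m-1})$, the inner integral over $x_{m}$ depends on the configuration only through $\dim H=m-1$ by rotation invariance, hence factors out as a constant. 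Splitting $x_{m}$ into its components in $H\cong\RR^{m-1}$ and $H^{\perp}\cong\RR^{n-m+1}$ and applying $(\star)$ in each — the normal part carries the extra weight $\dist(x_{m},H)^{\alpha}$, raising the order by $\tfrac{\alpha}{2}$ — the semigroup property collapses the two fractional integrals into $\frI^{(n+\alpha)/2}f$. Iterating and re-indexing $k=d-j+1$ turns the product into $\prod_{k=1}^{d}\Gamma(\tfrac{\alpha+k}{2})/\Gamma(\tfrac{k}{2})$ and the $\pi$-powers into $(\pi^{d/2}(\frI^{(d+\alpha)/2}f))^{d}$, giving \eqref{eq:JFormula}.

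\textbf{The $K$-inequality.} For \eqref{eq:KInequality} I would start from the cofactor expansion along the row of ones of the $(d+1)\times(d+1)$ determinant whose columns are $(x_{i},1)$; this gives $d!\,\Delta_{d}(x_{1},\dots,x_{d+1})\le\sum_{j=1}^{d+1}\nabla_{d}\big((x_{i})_{i\ne j}\big)$. Applying the elementary bound $(\sum_{j=1}^{d+1}a_{j})^{\alpha}\le(d+1)^{\max(0,\alpha-1)}\sum_{j}a_{j}^{\alpha}$ (convexity for $\alpha\ge1$, subadditivity for $0\le\alpha\le1$) and integrating, each of the $d+1$ symmetric summands factorises: the omitted point $x_{j}$ appears only through $f(p-\|x_{j}\|^{2})$ and integrates to $\pi^{d/2}(\frI^{d/2}f)(p)$ by $(\star)$, while the remaining $d$ points give exactly $J_{d,f}^{\alpha}(p)$. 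Hence $K_{d,f}^{\alpha}\le(d!)^{-\alpha}(d+1)^{\max(1,\alpha)}\pi^{d/2}(\frI^{d/2}f)\,J_{d,f}^{\alpha}$, where $\max(0,\alpha-1)+1=\max(1,\alpha)$ produces the stated combinatorial constant. Inserting \eqref{eq:JFormula} and bounding the Gamma-product $\prod_{k=1}^{d}\Gamma(\tfrac{\alpha+k}{2})/\Gamma(\tfrac{k}{2})$ by its largest factor $\big(\Gamma(\tfrac{d+\alpha}{2})/\Gamma(\tfrac{d}{2})\big)^{d}$ — legitimate because $x\mapsto\Gamma(x+\tfrac12)/\Gamma(x)$ is increasing, so $k\mapsto\Gamma(\tfrac{\alpha+k}{2})/\Gamma(\tfrac{k}{2})$ is increasing for $\alpha\ge0$ — yields exactly \eqref{eq:KInequality}.

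\textbf{The $K$-equality, and the main obstacle.} For \eqref{eq:KFormula} the strategy is to first establish the purely fractional-integral identity
\[
  \big(\frI^{\frac{\alpha}{2}}K_{d,f}^{\alpha}\big)(p)=\frac{\pi^{\frac{d(d+1)}{2}}}{(d!)^{\alpha}}\Big(\prod_{k=1}^{d}\frac{\Gamma(\frac{\alpha+k}{2})}{\Gamma(\frac{k}{2})}\Big)\big((\frI^{\frac{d+\alpha}{2}}f)(p)\big)^{d+1},
\]
and then apply $\frD^{\alpha/2}$ to both sides, using $\frD^{\alpha/2}\frI^{\alpha/2}K_{d,f}^{\alpha}=K_{d,f}^{\alpha}$ a.e.\ from \eqref{eq:DiffIntInverse}; this is what forces the ``almost all $p$'' and the continuity remark, while the extra hypothesis $(\frI^{d/2+\lceil\alpha/2\rceil}f)(p)<\infty$ supplies the finiteness needed to differentiate. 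To prove the displayed identity I would lift $x_{i}$ to $\tilde x_{i}=(x_{i},1)\in\RR^{d+1}$, so that $d!\,\Delta_{d}=\nabla_{d+1}(\tilde x_{1},\dots,\tilde x_{d+1})$, and run an affine Blaschke--Petkantschin reduction (equivalently, the base--height recursion $\Delta_{d}=\tfrac1d\Delta_{d-1}\cdot\dist(x_{d+1},\aff(x_{1},\dots,x_{d}))$ followed by the affine Blaschke--Petkantschin formula for $d$ points in $\RR^{d}$), integrating out the spanned flat with repeated use of $(\star)$ and the semigroup property. The hard part is precisely here: unlike in the $J$-computation, the weights $f(p-\|x_{i}\|^{2})$ are centred at the origin and are not translation invariant, so peeling a point no longer yields a configuration-independent constant — it depends on the tilt of, and the distance to, the spanned subspace. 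Reorganising these configuration-dependent one-dimensional integrals so that they recombine into the single power $(\frI^{(d+\alpha)/2}f)^{d+1}$, while tracking all $\pi$- and $\Gamma$-factors through the induction on $d$, is the technical heart of the proof, and the extra half-order of integration that separates $K$ from $J$ is exactly what the outer operator $\frD^{\alpha/2}$ removes.
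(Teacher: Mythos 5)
Your treatment of \eqref{eq:JFormula} and \eqref{eq:KInequality} is correct and close to the paper's. For $J_{d,f}^\alpha$ the paper also peels one point at a time, but via the linear Blaschke--Petkantschin formula, which keeps all points in the spanned subspace and shifts $\alpha\to\alpha+1$ in the recursion; your version fixes $\alpha$ and lowers the number of points in a fixed ambient $\RR^n$, which works by plain Tonelli plus rotation invariance and is, if anything, slightly more elementary. For the inequality the paper uses the same decomposition $\Delta_d\le\sum_j\Delta_d(0,X_j)$ and the same power inequality, but then applies Hadamard's bound $\Delta_d(0,X_j)\le\frac{1}{d!}\prod_{i\neq j}\|x_i\|$ directly instead of your detour through $J_{d,f}^\alpha$ and the monotonicity of $k\mapsto\Gamma(\frac{\alpha+k}{2})/\Gamma(\frac{k}{2})$; both give the stated constant. (Minor slip: the monotonicity you need is that of $x\mapsto\Gamma(x+a)/\Gamma(x)$ for $a=\alpha/2\ge 0$, not just $a=\tfrac12$; it follows from the monotonicity of the digamma function.)

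The genuine gap is in \eqref{eq:KFormula}. Your intermediate identity $\frI^{\alpha/2}K_{d,f}^\alpha=\frac{\pi^{d(d+1)/2}}{(d!)^\alpha}\prod_{k=1}^d\frac{\Gamma(\frac{\alpha+k}{2})}{\Gamma(\frac{k}{2})}\big(\frI^{\frac{d+\alpha}{2}}f\big)^{d+1}$ is exactly right, and inverting it by $\frD^{\alpha/2}$ via \eqref{eq:DiffIntInverse} is what the paper does; but you explicitly leave unproved the one computation that carries all the content, declaring the ``recombination'' of configuration-dependent integrals to be the technical heart without resolving it. The observation that dissolves the difficulty is to run the affine Blaschke--Petkantschin reduction on $J_{d+1,f}^{\alpha-1}$ (already known from \eqref{eq:JFormula} in dimension $d+1$) rather than on $K_{d,f}^\alpha$: writing $\nabla_{d+1}(x_1,\dots,x_{d+1})=d!\,\dist(0,\aff(x_1,\dots,x_{d+1}))\,\Delta_d(x_1,\dots,x_{d+1})$ for $d+1$ points of $\RR^{d+1}$ and integrating over affine hyperplanes $H_t(u)=H_0(u)+tu$, one has $\|x_i\|^2=\|y_i\|^2+t^2$ for the projections $y_i=x_i-tu$, so the radial weight restricts to $f\big((p-t^2)-\|y_i\|^2\big)$ --- again a radial weight about the foot of the perpendicular with shifted parameter $p-t^2$. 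Hence the hyperplane integral is exactly $K_{d,f}^{\alpha}(p-t^2)$, no recombination is needed, and $\int_0^\infty t^{\alpha-1}K_{d,f}^{\alpha}(p-t^2)\,\dd t=\tfrac12\Gamma(\tfrac{\alpha}{2})\big(\frI^{\alpha/2}K_{d,f}^{\alpha}\big)(p)$ produces the fractional integral; combining with \eqref{eq:JFormula} gives your identity. A second point you wave at but must actually verify before invoking \eqref{eq:DiffIntInverse} is that $\big(\frI^{\lceil\alpha/2\rceil}K_{d,f}^{\alpha}\big)(p)<\infty$; the paper deduces this from \eqref{eq:KInequality}, the monotonicity of $\frI^{\frac{d+\alpha}{2}}f$, the semigroup property \eqref{eq:SemigroupProp} and the hypothesis $(\frI^{\frac{d}{2}+\lceil\frac{\alpha}{2}\rceil}f)(p)<\infty$.
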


The above theorem is of independent interest, since it in particular allows to compute moments for the volume in some models of random simplices and polytopes.

\begin{corollary}\label{cor:RandomSimplexVolume}
    Let $E$ be an interval of type (i), (ii) or (iii) and let $f\in L_{\rm loc}^{1,+}(E)$ be a function which is monotone and continuous almost everywhere on $\inter E$. Let $p\in \inter E$ be such that $(\frI^{d\over 2}f)(p)\in (0,\infty)$ {(see \eqref{eq:Frac_Int} for the definition of $\frI^{d\over 2}f$)} and let $X_1,\ldots,X_{d+1}$ be i.i.d. isotropic random points in $\RR^d$ whose distribution has density 
    $$
        g(x)=\pi^{-{d\over 2}}\big((\frI^{d\over 2}f)(p)\big)^{-1}f(p-\|x\|^2).
    $$
    Then for any $\alpha>-1$ and $d\ge 1$ we have
    \begin{equation}\label{eq:ExpectationParallelogram}
        \EE \big[\nabla_d(X_1,\ldots,X_{d})^{\alpha}\big]=\prod_{k=1}^{d}\frac{\Gamma\left(\frac{\alpha+k}{2}\right)}{\Gamma(\frac{k}{2})} \left({(\frI^{\frac{d+\alpha}{2}}f)(p)\over (\frI^{d\over 2}f)(p)}\right)^d,
    \end{equation}
    and for any $\alpha\ge 0$ and $d\ge 2$ such that $(\frI^{{d\over 2}+\lceil{\alpha\over 2}\rceil}f)(t)<\infty$ for some $\varepsilon>0$ and all $t\in (p-\varepsilon,p+\varepsilon)\subset\inter E$ we obtain
    \begin{equation}\label{eq:ExpectationSimplex}
        \EE \big[\Delta_d(X_1,\ldots,X_{d+1})^{\alpha}\big]=\frac{1}{(d!)^\alpha} \prod_{k=1}^{d}\frac{\Gamma\left(\frac{\alpha+k}{2}\right)}{\Gamma(\frac{k}{2})}{\big(\frD^{\frac{\alpha}{2}}\big[(\frI^{\frac{d+\alpha}{2}}f)^{d+1}\big]\big)(p)\over \big((\frI^{d\over 2}f)(p)\big)^{d+1}}.
    \end{equation}
\end{corollary}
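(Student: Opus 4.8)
The plan is to reduce both identities directly to the closed forms for $J_{d,f}^\alpha$ and $K_{d,f}^\alpha$ established in Theorem \ref{thm:kd_jd}. First I would record the normalization of the density $g$. Passing to polar coordinates and substituting $y = p - \|x\|^2$, exactly as in the proof of Lemma \ref{lm:IntensityParabola} but with exponent $\frac d2 - 1$ in place of $\frac d2$, one computes
\[
    \int_{\RR^d} f(p - \|x\|^2)\,\dd x = \frac{\omega_d}{2}\int_{-\infty}^p f(y)(p-y)^{\frac d2 - 1}\,\dd y = \pi^{\frac d2}\big(\frI^{\frac d2}f\big)(p).
\]
Since $(\frI^{\frac d2}f)(p) \in (0,\infty)$ by hypothesis, $g$ is a genuine probability density and equals $\pi^{-\frac d2}\big((\frI^{\frac d2}f)(p)\big)^{-1} f(p - \|\cdot\|^2)$.

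Because $X_1,\dots,X_{d+1}$ are i.i.d.\ with density $g$, I would expand each moment as a product integral and factor out the normalizing constant. Comparing with the definitions of $J_{d,f}^\alpha$ and $K_{d,f}^\alpha$ immediately gives
\begin{align*}
    \EE\big[\nabla_d(X_1,\dots,X_d)^\alpha\big] &= \pi^{-\frac{d^2}{2}}\big((\frI^{\frac d2}f)(p)\big)^{-d}\, J_{d,f}^\alpha(p),\\
    \EE\big[\Delta_d(X_1,\dots,X_{d+1})^\alpha\big] &= \pi^{-\frac{d(d+1)}{2}}\big((\frI^{\frac d2}f)(p)\big)^{-(d+1)}\, K_{d,f}^\alpha(p).
\end{align*}
Substituting the closed form \eqref{eq:JFormula} for $J_{d,f}^\alpha(p)$, the powers of $\pi$ cancel and \eqref{eq:ExpectationParallelogram} follows for all $\alpha > -1$, $d\ge 1$, with no further hypotheses. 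Likewise, inserting \eqref{eq:KFormula} for $K_{d,f}^\alpha(p)$ and cancelling $\pi^{\frac{d(d+1)}{2}}$ yields \eqref{eq:ExpectationSimplex}.

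The point requiring care — and the main obstacle — is that \eqref{eq:KFormula} is asserted in Theorem \ref{thm:kd_jd} only for almost every $p$, with pointwise equality guaranteed only where $K_{d,f}^\alpha$ is continuous. Hence the heart of the argument is to deduce from the corollary's hypotheses (that $f$ is monotone and continuous almost everywhere, and $(\frI^{\frac d2 + \lceil \alpha/2\rceil}f)(t) < \infty$ on a neighborhood $(p-\varepsilon, p+\varepsilon)$) that $K_{d,f}^\alpha$ is continuous at $p$. I would argue by dominated convergence: monotonicity of $f$ lets me bound $\prod_{i} f(t - \|x_i\|^2)$, for all $t$ in the neighborhood, by the corresponding product evaluated at a fixed endpoint (the upper one if $f$ is increasing, the lower one if decreasing), and the inequality \eqref{eq:KInequality} shows this dominating integrand is integrable against $\Delta_d(x_1,\dots,x_{d+1})^\alpha$. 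Its right-hand side is finite because $(\frI^{\frac{d+\alpha}{2}}f)$ and $(\frI^{\frac d2}f)$ are finite at the endpoint: since $d\ge 2$ gives $1\le \frac d2\le \frac{d+\alpha}{2}\le \frac d2+\lceil\alpha/2\rceil$, this follows from the integrability assumption via Lemma \ref{lm:technical}. As $f$ is continuous almost everywhere, $f(t-\|x_i\|^2)\to f(p-\|x_i\|^2)$ for almost every $(x_1,\dots,x_{d+1})$ as $t\to p$, so dominated convergence gives $K_{d,f}^\alpha(t)\to K_{d,f}^\alpha(p)$. Thus the equality in \eqref{eq:KFormula} holds at $p$, completing the proof.
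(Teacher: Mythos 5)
Your proposal is correct and follows essentially the same route as the paper's proof: normalize $g$ via the polar-coordinate identity $\int_{\RR^d} f(p-\|x\|^2)\,\dd x = \pi^{d/2}(\frI^{d/2}f)(p)$, reduce the moments to $c(p)^{-d}J_{d,f}^{\alpha}(p)$ and $c(p)^{-d-1}K_{d,f}^{\alpha}(p)$, and then invoke Theorem \ref{thm:kd_jd}. You also correctly identified the one genuinely delicate point — that \eqref{eq:KFormula} is only an a.e.\ identity and that continuity of $K_{d,f}^{\alpha}$ at $p$ must be established via dominated convergence using the monotonicity and a.e.\ continuity of $f$ together with \eqref{eq:KInequality} and Lemma \ref{lm:technical} — which is exactly the argument the authors give.
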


\begin{remark}
    Let us point out that for integer values of $\alpha$ the formula \eqref{eq:ExpectationParallelogram} can also be deduced from a more general result of Miles \cite[Theorem 1]{Miles_IRS} (in particular formula \cite[Equation (23)]{Miles_IRS}).
\end{remark}

\begin{remark}
    Let us consider a few special cases. Let $f(t)=t^{\beta}{\bf 1}(t>0)$, $\beta>-1$. Note that $f\in L_{\rm loc}^{1,+}([0,\infty))$ and, hence, by Proposition \ref{prop:FICondition2} we have $(\frI^{\alpha}f)(p)<\infty$ for any $p\in(0,\infty)$ and $\alpha>0$. Then, since for any $\beta>-1$ we have
    \begin{equation}\label{eq:30.05.24_1}
        (\frI^{d+\alpha\over 2}f)(p)={\Gamma(\beta+1)\over \Gamma({d+\alpha\over 2}+\beta+1)}p^{{d+\alpha\over 2}+\beta},
    \end{equation}
    the density $g$ takes the form
    $$
        g(x)={\Gamma({d\over 2}+\beta+1)\over \pi^{d\over 2}p^{{d\over 2}+\beta}\Gamma(\beta+1)}(p-\|x\|^2)^{\beta}{\bf 1}\big(\|x\|<\sqrt{p}\big).
    $$
    Further since for any $\alpha>0$ we obtain
    \begin{equation}\label{eq:30.05.24_2}
        \big(\frD^{\frac{\alpha}{2}}\big[(\frI^{\frac{d+\alpha}{2}}f)^{d+1}\big]\big)(p)=\Big({\Gamma(\beta+1)\over \Gamma({d+\alpha\over 2}+\beta+1)}\Big)^{d+1}{\Gamma({(d+1)(d+\alpha)\over 2}+(d+1)\beta+1)\over \Gamma({d(d+1)+\alpha d\over 2}+(d+1)\beta+1)}p^{{d(d+1)+\alpha d\over 2}+(d+1)\beta}
    \end{equation}
    we conclude
    \begin{align*}
        \EE \big[\nabla_d(X_1,\ldots,X_{d})^{\alpha}\big]&=p^{\alpha d\over 2}\prod_{k=1}^{d}\frac{\Gamma\left(\frac{\alpha+k}{2}\right)}{\Gamma(\frac{k}{2})} \left({\Gamma({d\over 2}+\beta+1)\over \Gamma({d+\alpha\over 2}+\beta+1)}\right)^d,\\
        \EE \big[\Delta_d(X_1,\ldots,X_{d+1})^{\alpha}\big]&=\frac{p^{\alpha d\over 2}}{(d!)^\alpha} \prod_{k=1}^{d}\frac{\Gamma\left(\frac{\alpha+k}{2}\right)}{\Gamma(\frac{k}{2})}\Big({\Gamma({d\over 2}+\beta+1)\over \Gamma({d+\alpha\over 2}+\beta+1)}\Big)^{d+1}{\Gamma({(d+1)(d+\alpha)\over 2}+(d+1)\beta+1)\over \Gamma({d(d+1)+\alpha d\over 2}+(d+1)\beta+1)},
    \end{align*}
    where the first formula holds for $\alpha>-1$ and the second holds for $\alpha\ge 0$. The second formula with $p=1$ has been derived in \cite[Equation (74)]{Miles_IRS} for integer $\alpha\ge 0$ and can be found in \cite[Proposition 2.8]{KTT} for general $\alpha\ge 0$. The first formula with $p=1$ is due to Mathai and can be found in \cite[Theorem 19.2.5]{charalambides2000probability}. In the same way by choosing $f(t)=(-t)^{-\beta}{\bf 1}(t<0)$ with $\beta>{d\over 2}+1$ and $p=-1$ and $f(t)=e^{t/2}$ with $p=0$ we recover formulas (72) and (70) from \cite{Miles_IRS}.
\end{remark}

As a direct consequence of Theorem \ref{thm:typcell} we obtain the following formulas for the moments of the volume of a $\nu$-weighted typical cell.

\begin{theorem}\label{thm:moments}
    Let $d\ge 2$ and let $f$ be admissible. Let $\nu, s\in\RR$ be such that \eqref{eq:FiniteNormalizationConst} holds for $\nu$ and $\nu+s$. Then we have
    \begin{align*}
        \EE \Vol(Z_{d,\gamma,\nu}(f))^s = {\alpha(f,\gamma,\nu+s)\over \alpha (f,\gamma,\nu)},
    \end{align*}
    where $\alpha(f,\gamma,\nu)$ is given by \eqref{eq:BoundNormalizationConst}. If, moreover, $(\frI^{{d\over 2}+\lceil{\max\{\nu+s,\nu\}+1\over 2}\rceil})(p)<\infty$ for any $p\in E$, then 
    \begin{align*}
        \EE \Vol(Z_{d,\gamma,\nu}(f))^s ={1\over (d!)^s}\prod_{k=1}^d{\Gamma({\nu+s+1+k\over 2})\over \Gamma({\nu+1+k\over 2})} {\int_{E}e^{-\gamma\pi^{\frac{d}{2}}(\frI^{\frac{d}{2}+1}f)(p)}\big(\frD^{\frac{\nu+s+1}{2}}\big[\big(\frI^{\frac{d+\nu+s+1}{2}}f\big)^{d+1}\big]\big)(p) \dd p\over \int_{E}e^{-\gamma\pi^{\frac{d}{2}}(\frI^{\frac{d}{2}+1}f)(p)} \big(\frD^{\frac{\nu+1}{2}}\big[\big(\frI^{\frac{d+\nu+1}{2}}f\big)^{d+1}\big]\big)(p) \dd p},
    \end{align*}
\end{theorem}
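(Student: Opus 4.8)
The plan is to reduce the moment to a ratio of the normalizing constants $\alpha(f,\gamma,\cdot)$ by exploiting the translation invariance of the volume functional, and then to substitute the closed-form expressions supplied by Theorem \ref{thm:typcell}. First I would express the $s$-th moment as the integral of $\Vol(\cdot)^s$ against the law $\PP_{f,\gamma,\nu}$ of $Z_{d,\gamma,\nu}(f)$. Inserting the definition of $\PP_{f,\gamma,\nu}$ and recalling that the typical cell is the centered cell $C((v,h),\eta^*_{f,\gamma})-v$, this reads
\[
    \EE\Vol(Z_{d,\gamma,\nu}(f))^s = \frac{1}{\alpha(f,\gamma,\nu)}\,\EE\sum_{(v,h)\in\eta^*_{f,\gamma}}{\bf 1}(v\in[0,1]^d)\,\Vol\big(C((v,h),\eta^*_{f,\gamma})-v\big)^s\,\Vol\big(C((v,h),\eta^*_{f,\gamma})\big)^\nu .
\]
Since Lebesgue volume is invariant under the translation by $-v$, we may replace $\Vol(C-v)^s$ by $\Vol(C)^s$, so that the two powers combine into $\Vol(C)^{\nu+s}$. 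The resulting expectation is exactly the definition of $\alpha(f,\gamma,\nu+s)$, which gives the first identity
\[
    \EE\Vol(Z_{d,\gamma,\nu}(f))^s=\frac{\alpha(f,\gamma,\nu+s)}{\alpha(f,\gamma,\nu)} .
\]
The assumption that \eqref{eq:FiniteNormalizationConst} holds for both $\nu$ and $\nu+s$ guarantees, via \eqref{eq:BoundNormalizationConst}, that numerator and denominator are finite and strictly positive, so the ratio is well defined.

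For the explicit formula I would substitute the closed-form expression for $\alpha(f,\gamma,\cdot)$ from the last display of Theorem \ref{thm:typcell}. That representation is valid provided $(\frI^{\frac{d}{2}+\lceil\frac{\mu+1}{2}\rceil}f)(p)<\infty$ for all $p\in E$, where $\mu$ is the weight in question; since $\max\{\nu+s,\nu\}$ dominates both $\nu+s$ and $\nu$, the single hypothesis $(\frI^{\frac{d}{2}+\lceil\frac{\max\{\nu+s,\nu\}+1}{2}\rceil}f)(p)<\infty$ yields the required finiteness for both indices by Lemma \ref{lm:technical}. Forming the quotient of the two closed-form expressions, the common prefactor $(2\gamma)^{d+1}\pi^{d(d+1)/2}/(d+1)$ and the products $\prod_{k=1}^d\Gamma(\tfrac{k}{2})^{-1}$ cancel; the powers of $d!$ leave the factor $(d!)^{-s}$, while the Gamma products combine into $\prod_{k=1}^{d}\Gamma(\tfrac{\nu+s+1+k}{2})/\Gamma(\tfrac{\nu+1+k}{2})$, producing precisely the stated second formula.

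Since the whole argument is a bookkeeping consequence of Theorem \ref{thm:typcell}, there is essentially no analytic obstacle: the single point requiring care is the integrability accounting, namely confirming that the stated hypotheses on $\nu$ and $\nu+s$ place us in the regime where both representations of $\alpha(f,\gamma,\cdot)$ in Theorem \ref{thm:typcell} apply simultaneously. Once that is checked, the remainder is an exact cancellation of constants.
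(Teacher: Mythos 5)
Your proposal is correct and follows essentially the same route as the paper: the first identity is obtained exactly as in the paper's proof, by recognizing that the $s$-th moment under $\PP_{f,\gamma,\nu}$ reproduces the defining sum for $\alpha(f,\gamma,\nu+s)$ (translation invariance of the volume being the only observation needed), and the explicit formula then follows by taking the quotient of the two closed-form expressions for $\alpha$ from Theorem \ref{thm:typcell}. Your extra remark that the single hypothesis with $\max\{\nu+s,\nu\}$ covers both indices via Lemma \ref{lm:technical} is a correct and slightly more careful accounting than the paper gives.
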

\begin{remark}
    The moments of the volume of the $\nu$-weighted typical cell of the $\beta$-, $\beta'$- and Gaussian-Delaunay tessellation, where the function $f$ is defined by \eqref{eq:BetaModel}, \eqref{eq:BetaPrimeModel} and \eqref{eq:GaussianModel}, respectively, are explicitly given in \cite[Theorem~5.1]{GKT20} and \cite[Corollary~5.7]{GKT21}. Thus, for example, combining the formula from Theorem \ref{thm:moments} with \eqref{eq:30.05.24_1} and \eqref{eq:30.05.24_2} we recover the formula from \cite[Theorem~5.1]{GKT20} for the $\beta$-model.
\end{remark}

Before we move to the proof of the above results let us discuss the integrability condition \eqref{eq:FiniteNormalizationConst}. Let us point out that for any $f\in L_{\rm loc}^{1,+}(E)$ satisfying \hyperref[item:F1]{(F1)} (see Definition \ref{def:admissible}) we have 
$$
    \exp\big(-\pi^{\frac{d}{2}}(\frI^{\frac{d}{2}+1}f)(p)\big)>0
$$ 
on $E$. At the same time if $f$ is strictly positive on some set $I\subseteq E$ of positive Lebesgue measure, then 
\begin{align*}
    K^{\nu+1}_{d,f}(p)&=\int_{(\RR^d)^{d+1}} \Delta_d(x_1,\dots,x_{d+1})^{\nu+1} \prod_{i=1}^{d+1} f(p-\|x_i\|^2)\dd x_1\dots \dd x_{d+1}>0
\end{align*}
for any $p\in E$ and hence   
$$
    \int_{E}\exp\big(-\pi^{\frac{d}{2}}(\frI^{\frac{d}{2}+1}f)(p)\big) K^{\nu+1}_{d,f}(p) \dd p>0.
$$
On the other hand in general it is not easy to check that the above integral is finite. In the proposition below we present a few cases of functions and corresponding values $\nu$ such that \eqref{eq:FiniteNormalizationConst} holds.
In order to formulate the proposition we need some additional definitions. A measurable function $f\colon\RR_+\to \RR_+$ is called \textit{regularly varying at infinity with index $\rho$} if
\[
    \lim_{x\to\infty}{f(\lambda x)\over f(x)}=\lambda^{\rho},
\]
for all $\lambda>0$. The function $f(x)$ is called \textit{regularly varying at $0$ with index $\rho$} if the function $f(x^{-1})$ is regularly varying at infinity with index $\rho$. For more details we refer the reader to \cite{bingham_regular_variation} and \cite{resnick_ERVPP}.

\begin{proposition}\label{prop:FiniteNormalizationConst}
Let $d\ge 2$, $E$ be an interval of type (i), (ii) or (iii) and let $f\in L_{{\rm loc}}^{1,+}(E)$. If one of the conditions
\begin{itemize}
    \item[(i)]  $f$ satisfies \hyperref[item:F1]{(F1)} and $\nu=1$;
    \item[(ii)] $f\in L^1(\RR)$, $f$ satisfies \hyperref[item:F1]{(F1)} and $-1\leq \nu\leq 1$;
    \item[(iii)] $E=[0,
    \infty)$, $f$ is regularly varying at $+\infty$ with index $\beta>-1$ and $\nu\ge -1$;
    \item[(iv)] $E=(-\infty,0)$, $f$ is regularly varying at $0$ with index $\beta>{d\over 2}+1$, $f$ satisfies $(\frI^{\alpha}f)(p)<\infty$, $p<0$ for some $\alpha> {d\over 2}+1$ and $-1\leq \nu<2\min(\alpha,\beta)-d-1$;
\end{itemize}
    holds, then 
    $$
        \int_{E}\exp\big(-\gamma\pi^{\frac{d}{2}}(\frI^{\frac{d}{2}+1}f)(p)\big) K^{\nu+1}_{d,f}(p) \dd p<\infty,
    $$
    where \hyperref[item:F1]{(F1)} is given in Definition \ref{def:admissible} and $\frI^{\alpha}f$ is defined in \eqref{eq:Frac_Int}.
\end{proposition}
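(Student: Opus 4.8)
The plan is to prove only the finiteness $\int_{E}e^{-\gamma\pi^{d/2}(\frI^{d/2+1}f)(p)}K^{\nu+1}_{d,f}(p)\,\dd p<\infty$, since the strict positivity is exactly the elementary lower bound established in the discussion preceding the proposition. The starting point in all four cases is the inequality \eqref{eq:KInequality} of Theorem \ref{thm:kd_jd}, applied with $\alpha=\nu+1$; this is legitimate because $\nu\ge -1$ and $d\ge 2$ throughout. It bounds $K^{\nu+1}_{d,f}(p)$ by a constant $C_{d,\nu}$ times $\big((\frI^{\frac{d+\nu+1}{2}}f)(p)\big)^{d}(\frI^{\frac{d}{2}}f)(p)$. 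Writing $G:=\frI^{\frac{d}{2}+1}f$ and recalling from the semigroup property \eqref{eq:SemigroupProp} together with $\tfrac{\dd}{\dd p}(\frI^{1}g)=g$ for locally integrable $g$ that $G'=\frI^{\frac{d}{2}}f$ almost everywhere (with $G$ nondecreasing, as $f\ge 0$), the task reduces to
\[
  \int_E e^{-\gamma\pi^{d/2}G(p)}\,\big((\frI^{\frac{d+\nu+1}{2}}f)(p)\big)^d\,G'(p)\,\dd p<\infty .
\]

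The case $\nu=1$ (condition (i)) is then immediate and interval-independent. Here $\frac{d+\nu+1}{2}=\frac{d}{2}+1$, so the middle factor is exactly $G(p)^d$ and the integrand is the perfect derivative $e^{-\gamma\pi^{d/2}G(p)}G(p)^dG'(p)$. Substituting $u=G(p)$ — legitimate because $G$ is finite on $E$ by \hyperref[item:F1]{(F1)} and monotone — collapses the integral to $\int e^{-\gamma\pi^{d/2}u}u^{d}\,\dd u$ over a subinterval of $[0,\infty)$, which is dominated by $\Gamma(d+1)(\gamma\pi^{d/2})^{-(d+1)}<\infty$.

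For $\nu\neq 1$ the same substitution leaves a residual power of $\frI^{\frac{d+\nu+1}{2}}f$ that must be controlled against $G$, and the analysis becomes end-dependent. In case (ii) ($f\in L^1(\RR)$, $-1\le\nu\le 1$) I would use a Lyapunov/Hölder interpolation in the \emph{order} of the fractional integral, namely $\Gamma(\tfrac{d+\nu+1}{2})(\frI^{\frac{d+\nu+1}{2}}f)(p)\le\big(\Gamma(\tfrac{d}{2}+1)G(p)\big)^{\frac{\nu+1}{2}}\big(\Gamma(\tfrac{d}{2})G'(p)\big)^{\frac{1-\nu}{2}}$ (valid since $\tfrac{\nu+1}{2}\in[0,1]$), combined with $\|f\|_1<\infty$: near the end where $G\to\infty$ the factor $e^{-\gamma\pi^{d/2}G}$ forces convergence, while on the remaining region the fractional integrals are bounded directly by $\|f\|_1$ and powers of $p$. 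In cases (iii) and (iv) I would invoke the regular variation hypotheses and a Karamata/Tauberian argument to obtain, for (iii), the asymptotics $(\frI^{\alpha}f)(p)\sim\frac{\Gamma(\beta+1)}{\Gamma(\beta+1+\alpha)}\,p^{\beta+\alpha}\ell(p)$ as $p\to\infty$, and the analogous expansion near $0^-$ for (iv). Then $G$ grows like a regularly varying function of strictly positive index, so $e^{-\gamma\pi^{d/2}G}$ decays faster than any power and dominates the polynomial-times-slowly-varying growth of $(\frI^{\frac{d+\nu+1}{2}}f)^d\,G'$ at that end.

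The genuinely delicate part is the two-sided analysis in case (iv) on $E=(-\infty,0)$. Near $0^{-}$ one has $G\to\infty$ because $\beta>\frac{d}{2}+1$, exactly as in the $\beta'$-computation of Example \ref{ex:BetaModels}, and the exponential factor disposes of this end. Near $-\infty$, however, the decay is governed not by $\beta$ but by the hypothesis $(\frI^{\alpha}f)(p)<\infty$, and the point is to show that $\big(\frI^{\frac{d+\nu+1}{2}}f\big)^d\,G'$ decays like $(-p)^{-\theta}$ with $\theta>1$; tracking the exponents shows that this is precisely the content of $\nu<2\min(\alpha,\beta)-d-1$. Thus the main obstacle is transferring the regular variation of $f$ to all the relevant fractional integrals and simultaneously matching the two distinct decay mechanisms at the two ends of $(-\infty,0)$, while verifying that the slowly varying factors never spoil the exponential domination — the bookkeeping of exponents and constants is where essentially all the effort lies.
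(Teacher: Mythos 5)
Your reduction via \eqref{eq:KInequality} to the integral $Q_\nu(f)=\int_E e^{-\gamma\pi^{d/2}G(p)}\big((\frI^{\frac{d+\nu+1}{2}}f)(p)\big)^dG'(p)\,\dd p$ with $G=\frI^{d/2+1}f$, $G'=\frI^{d/2}f$, is exactly the paper's first step, and your case (i) (perfect derivative, substitution $u=G(p)$) and the broad strategy for case (iii) (Karamata bounds making $G$ grow like a positive power, so the exponential dominates) coincide with the paper's argument. The problems are in cases (ii) and (iv).

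In case (ii) your H\"older interpolation is taken between the wrong endpoints. Interpolating $\frI^{\frac{d+\nu+1}{2}}f$ between $\frI^{d/2}f=G'$ and $\frI^{d/2+1}f=G$ leaves the integrand bounded by $e^{-cG}G^{\frac{d(\nu+1)}{2}}(G')^{\frac{d(1-\nu)}{2}+1}$, which for $d\ge 3$ and $\nu<1$ is \emph{not} a perfect derivative: the surplus factor $(G')^{\frac{d(1-\nu)}{2}}$ is not controlled by $\|f\|_1$ (only $\frI^1f\le\|f\|_1$; $\frI^{d/2}f$ can grow polynomially), and your closing remark about ``powers of $p$'' does not address the region where $G$ stays bounded, e.g.\ as $p\to-\infty$ when $E=\RR$. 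The paper instead interpolates between $\frI^{1}f\le\|f\|_1$ and $G$ (H\"older with exponents $\frac{d}{1-\nu}$, $\frac{d}{d+\nu-1}$), giving $(\frI^{\frac{d+\nu+1}{2}}f)^d\le C\|f\|_1^{1-\nu}G^{d+\nu-1}$ and hence again a perfect derivative $e^{-cG}G^{d+\nu-1}G'$ (with the degenerate case $\nu=-1$, $d=2$ handled separately). Your route is repairable — a second interpolation $\frI^{d/2}f\le C\|f\|_1^{2/d}G^{(d-2)/d}$ collapses the surplus factor and lands on the same perfect derivative — but that step is missing, and without it the argument does not close.

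In case (iv) the treatment of the end $p\to-\infty$ is a genuine gap: you propose to show that $\big(\frI^{\frac{d+\nu+1}{2}}f\big)^dG'$ decays like $(-p)^{-\theta}$ with $\theta>1$, but the hypothesis $(\frI^{\alpha}f)(p)<\infty$ for all $p<0$ carries no quantitative decay information as $p\to-\infty$, and regular variation at $0$ says nothing there either; no such power-law bound is derivable. The paper's point is that this end requires no decay estimate at all: since $\frac{d+\nu+1}{2}\ge 1$, the function $\frI^{\frac{d+\nu+1}{2}}f$ is nondecreasing, hence bounded by its value at a fixed $B<0$ on $(-\infty,B]$, and $\int_{-\infty}^{B}e^{-\gamma\pi^{d/2}G}G'\,\dd p\le(\gamma\pi^{d/2})^{-1}$ is again a perfect derivative. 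Correspondingly, you have mislocated the role of $\nu<2\min(\alpha,\beta)-d-1$: it is used near $0^-$, to guarantee $\frac{d+\nu+1}{2}<\min(\alpha,\beta)$ so that $\frI^{\frac{d+\nu+1}{2}}f$ is finite and the regular variation of $f$ transfers (via the substitution $f_\mu(u)=u^{-\mu-1}f(-1/u)$ and \eqref{eq:frac_f_g}) to a function of index $\beta-\mu-1>-1$, making the case-(iii) machinery applicable on $(B,0)$ where the exponential $e^{-c(-p)^{-(\beta-d/2-1-\delta)}}$ does the work.
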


\begin{remark}
    Note that the function $f_{d,\beta}$ for $\beta>-1$ satisfies (iii), while the function $f'_{d,\beta}$ for $\beta>{d\over 2}+1$ satisfies (iv) with $\alpha=\beta$. For the function $\widetilde f_{\lambda}$ one can also easily ensure that \eqref{eq:FiniteNormalizationConst} holds. Hence, Theorem \ref{thm:typcell} is applicable in these cases and in particular we recover the results of \cite[Theorem 4.5]{GKT20} and \cite[Theorem 5.1]{GKT21} (see also Remark \ref{rmk:decomposition} below).
\end{remark}

\begin{remark}
     In Example \ref{ex:IndQMarking} we considered the case when $\xi$ is an independent $\QQ$-marking of a homogeneous Poisson point process on $\RR^d$, where $\QQ$ is absolutely continuous with respect to the Lebesgue measure. This setting corresponds to the situation $\eta_{q,\gamma}$, where $q:\RR\to\RR_+$, $q\in L^1(\RR)$ and $q$ is admissible. In this case according to Proposition \ref{prop:FiniteNormalizationConst} (ii) Theorem \ref{thm:typcell} applies with $\nu=0$ and the obtained formula coincides with the one from \cite[Theorem 3.3.1]{Ldoc}. In order to see this, one should substitute  $u_i=y_i/\|y_i\|$ and $r_i=(\|u_i\|^2-t)^{1/2}$ for $i=1,\dots,d+1$, taking into account $\FF(dr)=2f(-r^2)r\dd r$.
\end{remark}

\subsection{Canonical decomposition}

Let us turn back to the representation of the typical cell $Z_{d,\gamma,\nu}(f)$ in the form \eqref{eq:TypicalCellRepresentation}, namely
\[
    Z_{d,\gamma,\nu}(f)\overset{d}{=}\conv(RX_1,\ldots,RX_{d+1}),
\]
where $R$ and $(X_1,\ldots,X_{d+1})$ are independent components and the distribution of \\$(X_1,\ldots,X_{d+1})$ has a density of type \eqref{eq:JointDensityX}. As it was mentioned above for a general admissible function $f$ we will not have such a representation as clearly follows from the formula presented in Theorem \ref{thm:typcell}. On the other hand let us consider an admissible function $f$ satisfying the following assumption: $f\not\equiv 0$ and suppose there exist functions $\phi,\psi\colon \RR \to\RR_+$ with $\inter E\subset \supp(\phi):= \{x\in \RR: \phi(x)\neq 0\}$ and $\psi(0)=1$, such that
\begin{align}\label{decomp}
    f(p-\phi(p)^2s^2)=f(p)\psi(s^2),\qquad\qquad p\in \inter E,\, s\in \RR.
\end{align}
One might extend \eqref{decomp} to $p\in \RR$, but for our purpose of describing the distribution of the typical cell $Z_{d,\gamma,\nu}(f)$ the decomposition as stated above is sufficient. 

In this case we may obtain the following corollary.

\begin{corollary}\label{cor:decomposition}
    Let $d\ge 2$ and let $f$ be admissible satisfying \eqref{decomp}. Let $\nu\in \RR$ be such that \eqref{eq:FiniteNormalizationConst} holds. Then
    \[
        Z_{d,\gamma,\nu}(f)\overset{d}{=}\conv(\phi(Z)X_1,\ldots,\phi(Z)X_{d+1}),
    \]
    where $Z$ is a random variable, independent of $(X_1,\ldots,X_{d+1})$, whose density is proportional to
    \[
        \phi(z)^{d(d+\nu+2)}f(z)^{d+1}e^{-\gamma\pi^{d\over 2}(\frI^{{d\over 2}+1}f)(z)}
    \]
    and  $(X_1,\ldots,X_{d+1})$ are random points in $\RR^d$ whose joint density is proportional to
    \[
        \Delta_d(x_1,\ldots,x_{d+1})^{\nu+1}\prod_{i=1}^{d+1}\psi(\|x_i\|^2).
    \]
\end{corollary}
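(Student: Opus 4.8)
The plan is to start from the explicit integral representation of $\PP_{f,\gamma,\nu}$ furnished by Theorem \ref{thm:typcell} and to disentangle the height variable $p$ from the directional configuration $(y_1,\dots,y_{d+1})$ by a single fibrewise rescaling. Concretely, for each fixed $p\in\inter E$ I would perform the change of variables $y_i=\phi(p)x_i$, $i=1,\dots,d+1$, which is legitimate because the hypothesis $\inter E\subset\supp(\phi)$ forces $\phi(p)>0$ on $\inter E$, so that $x_i\mapsto\phi(p)x_i$ is a linear bijection of $\RR^d$ with positive determinant. Three elementary scaling identities then come into play: the Jacobian contributes $\prod_{i=1}^{d+1}\dd y_i=\phi(p)^{d(d+1)}\prod_{i=1}^{d+1}\dd x_i$; the simplex volume scales as $\Delta_d(y_1,\dots,y_{d+1})=\phi(p)^d\Delta_d(x_1,\dots,x_{d+1})$, whence $\Delta_d(y_1,\dots,y_{d+1})^{\nu+1}=\phi(p)^{d(\nu+1)}\Delta_d(x_1,\dots,x_{d+1})^{\nu+1}$; and the convex hull itself scales as $\conv(y_1,\dots,y_{d+1})=\conv(\phi(p)x_1,\dots,\phi(p)x_{d+1})$.

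The decisive step is to invoke the decomposition hypothesis \eqref{decomp}: taking $s=\|x_i\|$ it gives $f\big(p-\|y_i\|^2\big)=f\big(p-\phi(p)^2\|x_i\|^2\big)=f(p)\,\psi(\|x_i\|^2)$, so that $\prod_{i=1}^{d+1}f(p-\|y_i\|^2)=f(p)^{d+1}\prod_{i=1}^{d+1}\psi(\|x_i\|^2)$. Substituting all of this into the formula of Theorem \ref{thm:typcell} (the outer integral is over $p\in E$, on which $\inter E$ and $E$ agree up to a null set) and collecting the powers of $\phi(p)$, namely $d(d+1)+d(\nu+1)=d(d+\nu+2)$, the integrand factorizes as a function of $p$ alone,
\[
    \phi(p)^{d(d+\nu+2)}\,f(p)^{d+1}\,e^{-\gamma\pi^{d/2}(\frI^{d/2+1}f)(p)},
\]
times a function of $(x_1,\dots,x_{d+1})$ alone, namely $\Delta_d(x_1,\dots,x_{d+1})^{\nu+1}\prod_{i=1}^{d+1}\psi(\|x_i\|^2)$, while the test set enters only through ${\bf 1}(\conv(\phi(p)x_1,\dots,\phi(p)x_{d+1})\in\cdot)$. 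All manipulations are justified by Tonelli's theorem since every factor is non-negative.

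This factorized form is precisely the law of $\conv(\phi(Z)X_1,\dots,\phi(Z)X_{d+1})$ with $Z$ and $(X_1,\dots,X_{d+1})$ independent and distributed with the asserted densities, the factorization of the joint density yielding independence directly. It then remains only to confirm that both marginal normalizing constants are finite and strictly positive. Positivity is immediate: since $f\not\equiv0$ and $\phi>0$ on $\inter E$, each non-negative factor is strictly positive on a set of positive Lebesgue measure. Finiteness follows because the product of the two marginal integrals equals, up to the explicit positive constant appearing in Theorem \ref{thm:typcell}, the quantity $\alpha(f,\gamma,\nu)$, which lies in $(0,\infty)$ by the standing assumption \eqref{eq:FiniteNormalizationConst}; a product of two strictly positive numbers can only be finite if each factor is finite. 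I do not anticipate a genuine obstacle: the entire argument is a fibrewise affine change of variables, and the only points demanding care are confirming that \eqref{decomp} applies with $s^2=\|x_i\|^2$ and that $\phi$ does not vanish on $\inter E$, both guaranteed by the stated assumptions.
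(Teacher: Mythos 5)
Your proposal is correct and is essentially identical to the paper's own proof: both apply the fibrewise change of variables $y_i=\phi(p)x_i$ in the integral formula of Theorem \ref{thm:typcell}, use \eqref{decomp} to replace $\prod_i f(p-\phi(p)^2\|x_i\|^2)$ by $f(p)^{d+1}\prod_i\psi(\|x_i\|^2)$, and collect the Jacobian and simplex-scaling powers into $\phi(p)^{d(d+\nu+2)}$ to obtain the factorized density. The extra remarks on positivity and finiteness of the marginal normalizing constants are a harmless (and correct) supplement to what the paper leaves implicit.
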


\begin{remark}\label{rmk:decomposition}
    Note that the function $f_{d,\beta}$ satisfies \eqref{decomp} with $\phi(x)=\sqrt{x}{\bf 1}(x\ge 0)$ and $\psi(x)=(1-x)^{\beta}{\bf 1}(x\leq 1)$. At the same time the function $f'_{d,\beta}$ satisfies \eqref{decomp} with $\phi(x)=\sqrt{-x}{\bf 1}(x<0)$ and $\psi(x)=(1+x)^{-\beta}$ and $\widetilde f_{\lambda}$ satisfies \eqref{decomp} with $\phi(x)\equiv 1$ and $\psi(x)=e^{-x}$. Taking this into account we directly recover the results of \cite[Theorem 4.5]{GKT20} and \cite[Theorem 5.1]{GKT21}.
\end{remark}

On the other hand the condition \eqref{decomp} appeared to be very restrictive. In particular under the additional assumption that $f$ is differentiable there are only three families of functions satisfying \eqref{decomp}.

\begin{proposition}\label{lm:decomp}
    Let $f$ be admissible satisfying \eqref{decomp}. Moreover let $f$ and $\phi$ be differentiable on $\inter E$. Then $f$ has the following form 
    \begin{align}
        &f(x)= c_1(x+c_2)^\beta, &&x> -c_2, \beta > -1,\, c_1 > 0,\, c_2 \in \RR; \label{eq:BetaCase}\\
        &f(x)=c_1(-(x+c_2))^{-\beta}, &&x < -c_2, \beta > \frac{d}{2}+1,\, c_1 < 0,\, c_2 \in \RR;\label{eq:BetaPrimeCase}\\
        &f(x)=c_1e^{\lambda x}, &&x \in \RR, c_1>0,\, \lambda > 0.\label{eq:GaussianCase}
    \end{align}
\end{proposition}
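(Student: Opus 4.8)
The plan is to reduce \eqref{decomp} to a single-variable functional equation and then extract an ordinary differential equation for $f$ by separation of variables. Writing $u=s^2\ge 0$ and $g:=\phi^2$, the assumption \eqref{decomp} becomes
\[
f(p-g(p)u)=f(p)\psi(u),\qquad p\in\inter E,\ u\ge 0,
\]
with $\psi(0)=1$ and, since $\inter E\subset\supp\phi$, with $g(p)>0$ on $\inter E$. As $f\not\equiv 0$ is continuous, the set $U:=\{p\in\inter E\colon f(p)>0\}$ is open and non-empty; I would work first on $U$, where dividing by $f$ is legitimate and where $\psi(u)=f(p-g(p)u)/f(p)$ (for any fixed $p\in U$ and small $u$) is differentiable because $f$ is, with $\psi>0$ near $u=0$.

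Next I would differentiate the reduced equation in the two variables. Differentiating in $u$ gives $-g(p)f'(p-g(p)u)=f(p)\psi'(u)$, and differentiating in $p$ gives $f'(p-g(p)u)\bigl(1-g'(p)u\bigr)=f'(p)\psi(u)$. Eliminating $f'(p-g(p)u)$ between the two and dividing by $f(p)\psi(u)>0$ yields the separated identity
\[
\frac{\psi'(u)}{\psi(u)}\,\bigl(1-g'(p)u\bigr)=-\frac{g(p)f'(p)}{f(p)}=:A(p),
\]
valid for $p\in U$ and small $u\ge 0$. Evaluating at $u=0$ and using $\psi(0)=1$ forces $A(p)=\psi'(0)=:\kappa$ to be constant. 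If $\kappa=0$, then $f'\equiv 0$ on $U$, so $f$ is a positive constant, which is the case \eqref{eq:BetaCase} with $\beta=0$.

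If $\kappa\neq 0$, the identity reads $(\log\psi)'(u)\,\bigl(1-g'(p)u\bigr)=\kappa$; fixing any small $u\neq 0$ (at which $(\log\psi)'(u)\neq 0$) shows that $1-g'(p)u=\kappa/(\log\psi)'(u)$ is independent of $p$, whence $g'\equiv m$ is constant and $g(p)=\phi(p)^2=mp+b$ is affine. The relation $A\equiv\kappa$ then becomes the separable ODE $f'(p)/f(p)=-\kappa/(mp+b)$, which I would integrate. This distinguishes three cases according to the sign of $m$: for $m=0$ one gets $f(p)=c_1e^{\lambda p}$ with $\lambda=-\kappa/b$; for $m\neq 0$ one gets $f(p)=c_1|p+c_2|^{-\kappa/m}$ with $c_2=b/m$, which on the domain $\{mp+b>0\}$ reads $c_1(p+c_2)^{-\kappa/m}$ when $m>0$ (so $p>-c_2$) and $c_1(-(p+c_2))^{-\kappa/m}$ when $m<0$ (so $p<-c_2$). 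These are precisely the functional forms \eqref{eq:GaussianCase}, \eqref{eq:BetaCase} and \eqref{eq:BetaPrimeCase}.

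Finally I would globalize and fix the parameter ranges. Each explicit solution is real-analytic and strictly positive on the interior of its natural domain (a power $(x+c_2)^\beta$ vanishes or blows up only at the endpoint $x=-c_2$), so along $U$ the limit of $f$ at any interior point of $\inter E$ is positive; hence $f$ cannot vanish at an interior point, $U=\inter E$, and the constants $\kappa,m,b$ are global. The admissibility of $f$ (Definition \ref{def:admissible}) then pins down the exponents: local integrability of $(x+c_2)^\beta$ at the endpoint forces $\beta>-1$ in \eqref{eq:BetaCase}; finiteness of $(\frI^{\frac{d}{2}+1}f)$ at $-\infty$, i.e.\ condition \hyperref[item:F1]{(F1)}, forces $\beta>\frac{d}{2}+1$ in \eqref{eq:BetaPrimeCase}; and finiteness of $(\frI^{\frac{d}{2}+1}f)(t)$ for $f=c_1e^{\lambda p}$ forces $\lambda>0$ in \eqref{eq:GaussianCase}, while non-negativity of $f$ fixes the sign of $c_1$ in each case. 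I expect the crux to be the separation-of-variables step that turns \eqref{decomp} into the twin statements ``$A(p)$ is constant'' and ``$g'$ is constant''; once $\phi^2$ is known to be affine, the remaining ODE and the determination of the exponent ranges are routine, the only mild technicality being the continuity argument excluding interior zeros of $f$.
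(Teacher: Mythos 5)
Your strategy is sound and genuinely different from the paper's. The paper first specializes \eqref{decomp} at a point $l\in\supp(f)$ to eliminate $\psi$, obtaining a two--variable functional equation for $f$ alone, determines $\phi^2$ partly by a support comparison and partly by differentiating that equation, and then reduces to a Cauchy--Hamel equation $W(\theta\xi)=W(\theta)W(\xi)$ whose solutions give the powers and exponentials. You instead keep $\psi$, differentiate \eqref{decomp} in both variables, and separate variables to get in one stroke that $-g(p)f'(p)/f(p)=\psi'(0)=\kappa$ is constant and that $g'=(\phi^2)'$ is constant on $U$; the three forms then drop out of a separable first--order ODE. This bypasses both the Cauchy--Hamel machinery and the paper's somewhat delicate argument that $U_l'\neq 0$ almost everywhere. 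Two cosmetic remarks: the identity $(\log\psi)'(u)\,(1-g'(p)u)=\kappa$ holds only for $u$ in a $p$--dependent range (one needs $p-g(p)u\in\inter E$ and $\psi(u)>0$), so to conclude $g'(p_1)=g'(p_2)$ you should choose a common small $u>0$ for each pair $p_1,p_2$ rather than one $u$ for all of $U$; and $\psi(u)>0$ for small $u$ follows from $\psi(0)=1$ and continuity.

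The one step that does not hold as written is the globalization $U=\inter E$. You assert that ``along $U$ the limit of $f$ at any interior point of $\inter E$ is positive,'' but this fails exactly for the power--law solution with positive exponent: on a component $(\alpha,p_0)$ of $U$ the derived form can be $f(x)=c_1(p_0-x)^{\rho}$ with $\rho>0$, which tends to $0$ as $x\to p_0^-$, so continuity of $f$ alone does not preclude $p_0\in\inter E$ with $f\equiv 0$ to the right of $p_0$. (The blow--up alternative is harmless, since $f\to\infty$ at an interior point would contradict differentiability of $f$ there, and the exponential and negative--power forms have positive limits.) The missing ingredient is the hypothesis $\inter E\subset\supp(\phi)$, which you state but never invoke here: in the problematic scenario one has $g(x)=\phi(x)^2=m(x-p_0)$ on $(\alpha,p_0)$ with $m<0$, so by continuity of $\phi$ one gets $\phi(p_0)=0$ at the interior point $p_0\in\inter E$, a contradiction. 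With this fix every boundary point of $U$ inside $\inter E$ is excluded, $U$ is open and closed in the interval $\inter E$, and your globalization and the subsequent determination of the admissible parameter ranges go through as in the paper.
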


\begin{remark}
    It should be mentioned that due to Proposition \ref{prop:compwlin} we have $\cL_{d,\gamma}(f)\overset{d}{=}\sqrt{c_1}\cV_{d,\beta,\widetilde\gamma}$ if $f$ is of the form \eqref{eq:BetaCase}; $\cL_{d,\gamma}(f)\overset{d}{=}\sqrt{c_1}\cV'_{d,\beta,\widetilde\gamma}$ if $f$ is of the form \eqref{eq:BetaPrimeCase}; and $\cL_{d,\gamma}(f)\overset{d}{=}\sqrt{c_1}\cG_{d,\lambda,\widetilde\gamma}$ if $f$ is of the form \eqref{eq:GaussianCase}, where $\widetilde\gamma=c_1^{d/2+1}\gamma$. This fact may be used as a characterization of this three families of Poisson-Laguerre tessellations. A Similar characterization exists for $\beta$-, $\beta'$- and Gaussian-polytopes and is often referred to as canonical decomposition. For further details and relation to extreme-value distributions we refer the reader to \cite[Section 12]{Miles_IRS} and \cite[Section 3]{KZT20}.
\end{remark}

\subsection{Proofs}

\begin{proof}[Proof of Theorem \ref{thm:kd_jd}]
    We start by computing $J_{d,f}^{\alpha}(p)$ for any $\alpha>-1$, $d\ge 1$ and $p\in E$. Using the linear Blaschke-Petkanschin formula (see \cite[Theorem 7.2.1]{SW}) and the base-times-height formula 
    \[
        \nabla_d(x_1,\ldots,x_d)=\dist(x_d,\lin(x_1,\ldots,x_{d-1}))\nabla_{d-1}(x_1,\ldots,x_{d-1}),
    \]
    where $\dist(x, F)=\inf_{y\in F}\|x-y\|$ for any closed $F\subseteq \RR^d$ and $x\in\RR^d$ we get
    \begin{align*}
        J_{d,f}^{\alpha}(p)
        &= \int_{\RR^d} \int_{(\RR^d)^{d-1}} \nabla_{d-1}(x_1,\dots,x_{d-1})^\alpha \dist(x_{d},\lin(x_1,\dots,x_{d-1}))^{\alpha} \\ &\hspace{4cm} \times \prod_{i=1}^{d}f(p-\|x_i\|^2) \dd x_1 \dots \dd x_d\\
        &= {\omega_d\over 2}\int_{\RR^d} \int_{G(d,d-1)}\int_{L^{d-1}} \nabla_{d-1}(x_1,\dots,x_{d-1})^{\alpha+1} \dist(x_{d},L)^{\alpha} \\ &\hspace{4cm} \times \prod_{i=1}^{d}f(p-\|x_i\|^2)\lambda_L^{d-1}(\dd (x_1,\dots,x_{d-1}))\,\nu_{d-1}(\dd L)\, \dd x_d.
    \end{align*}
    Using Fubini's theorem and applying the change of variables $\RR^d\to L\times L^{\perp}$, $x_d\mapsto (y,t)$ we obtain 
    \begin{align}\label{eq:29.05.24_1}        
        J_{d,f}^{\alpha}(p) &= {\omega_d\over 2}\int_{G(d,d-1)}\int_{L^{d-1}}\nabla_{d-1}(x_1,\dots,x_{d-1})^{\alpha+1} \prod_{i=1}^{d-1}f(p-\|x_i\|^2) \notag\\ 
        &\qquad \times \int_{L}\int_{L^{\perp}} |t|^{\alpha}f(p-t^2-\|y\|^2)\lambda_{L}(\dd y) \lambda_{L^{\perp}}(\dd t)\lambda_L^{d-1}(\dd (x_1,\dots,x_{d-1}))\,\nu_{d-1}(\dd L),
    \end{align}
    where $\lambda_L$ is the Lebesgue measure restricted to $L$ (see \cite{SW} for more details). Now since $L$ is isometric to $\RR^{d-1}$ and $L^{\perp}$ is isometric to $\RR$ and applying Fubini's theorem we get
    \[
        I(p):=\int_{L}\int_{L^{\perp}} |t|^{\alpha}f(p-t^2-\|y\|^2)\lambda_{L}(\dd y) \lambda_{L^{\perp}}(\dd t)=2\int_{0}^{\infty}t^{\alpha}\int_{\RR^{d-1}} f(p-t^2-\|y\|^2)\dd y\,\dd t.
    \]
    Further, using polar coordinates $y=ru$, $r>0$, $u\in\SS^{d-2}$, change of variables $s=t^2$, $h=r^2$ and \eqref{eq:Frac_Int} we arrive at
    \begin{align*}
        I(p)&={\omega_{d-1}\over 2}\int_{0}^{\infty}s^{{\alpha+1\over 2}-1}\int_{0}^{\infty} f(p-s-h) h^{{d-1\over 2}-1} \dd h\, \dd s\\
        &=\pi^{d-1\over 2}\Gamma\Big({\alpha+1\over 2}\Big)(\frI^{\alpha+1\over 2}\frI^{d-1\over 2}f)(p)=\pi^{d-1\over 2}\Gamma\Big({\alpha+1\over 2}\Big)(\frI^{d+\alpha\over 2} f)(p),
    \end{align*}
    where in the last step we used the semigroup property \eqref{eq:SemigroupProp}. Substituting this into \eqref{eq:29.05.24_1} and noting that $\nu_{d-1}(G(d,d-1))=1$ leads to
    \begin{align*}
    J_{d,f}^{\alpha}(p)&= J_{d-1,f}^{\alpha+1}(p){\pi^{d-{1\over 2}}\Gamma({\alpha+1\over 2})\over \Gamma(\frac{d}{2})}(\frI^{d+\alpha\over 2} f)(p).
    \end{align*}
        Finally since for any $\alpha>-1$ we have
    \begin{align*}
        J_{1,f}^{\alpha}(p)=\int_{\RR}|x|^{\alpha}f(p-x^2)\dd x = \Gamma\Big(\frac{\alpha+1}{2}\Big)(\frI^{\frac{\alpha+1}{2}}f)(p),
    \end{align*}
    we get iteratively 
        \begin{align*}
        &J_{d,f}^{\alpha}(p) = \left(\prod_{k=1}^{d}\pi^{k-{1\over 2}}\frac{\Gamma\left(\frac{\alpha+k}{2}\right)}{\Gamma(\frac{k}{2})}\right) \left((\frI^{\frac{d+\alpha}{2}}f)(p)\right)^d=\left(\prod_{k=1}^{d}\frac{\Gamma\left(\frac{\alpha+k}{2}\right)}{\Gamma(\frac{k}{2})}\right) \left(\pi^{\frac{d}{2}}(\frI^{\frac{d+\alpha}{2}}f)(p)\right)^d.
    \end{align*}

    Next we prove the inequality \eqref{eq:KInequality}. We start by computing an upper bound for the volume of the simplex $\conv(x_1,\dots,x_{d+1})$. Let $X_i:=\{x_1,\dots,x_{i-1},x_{i+1},\dots,x_{d+1}\}$. First note that 
    \[
        \conv(x_1,\dots,x_{d+1})\subseteq \bigcup_{i=1}^{d+1} \conv(0,X_i),
    \]
    and, hence, $\Delta_d(x_1,\dots,x_{d+1}) \le \sum_{i=1}^{d+1} \Delta_d(0,X_i)$.
    For $\alpha> 1$ by the Hölder inequality with values $p=\alpha>1$ and $q=\frac{\alpha}{\alpha-1}$ we obtain
    \begin{align*}
        \Delta_d(x_1,\dots,x_{d+1})^{\alpha} &\le \left(\sum_{i=1}^{d+1} \Delta_d(0,X_i)\right)^{\alpha}\le (d+1)^{\alpha-1} \sum_{i=1}^{d+1} \Delta_d(0,X_i)^{\alpha},   
    \end{align*}
    while for $0\leq \alpha\leq 1$ we use the inequality $(x+y)^{\alpha}\leq x^{\alpha}+y^{\alpha}$, $x,y\ge 0$, to directly conclude 
    $$
        \Delta_d(x_1,\dots,x_{d+1})^{\alpha}\leq \sum_{i=1}^{d+1} \Delta_d(0,X_i)^{\alpha}.
    $$
    Further for $i=1,\dots,d+1$ we get
    \begin{align*}
        \Delta_d(0,X_i)^{\alpha} = \left(\frac{1}{d!}|\det(X_i)|\right)^{\alpha} \le \frac{1}{(d!)^{\alpha}}\prod_{j=1,j\neq i}^{d+1}\|x_j\|^{\alpha}.
    \end{align*}
    Thus, combining these estimates by \eqref{eq:Frac_Int} we get
    \begin{align*}
        K_{d,f}^{\alpha}(p)&\leq {(d+1)^{\max(0,\alpha-1)}\over (d!)^{\alpha}}\sum_{i=1}^{d+1}\int_{\RR^d}f(p-\|x_i\|^2)\dd x_i\prod_{j=1,j\neq i}^{d+1}\int_{\RR^d}f(p-\|x_j\|^2)\|x_j\|^{\alpha}\dd x_j\\
        &={(d+1)^{\max(1,\alpha)}\over (d!)^{\alpha}}{\pi^{d(d+1)\over 2}\Gamma({d+\alpha\over 2})^d\over \Gamma({d\over 2})^d}\Big((\frI^{{d+\alpha\over 2}}f)(p)\Big)^d(\frI^{{d\over 2}}f)(p).
    \end{align*}
    Let now $\alpha\ge 0$ be such that $(\frI^{{d\over 2}+\lceil{\alpha\over 2}\rceil}f)(p)<\infty$ for any $p\in \inter E$. In order to show \eqref{eq:KFormula} we write $K_{d,f}^\alpha(p)$ for $\alpha>0$ and $p\in \inter E$ in terms of $J_{d+1,f}^{\alpha-1}(p)$.
    First, we notice that using the base-times-height formula 
    $$
        \Delta_d(x_0,x_1,\ldots,x_d)={1\over d}\dist(x_0,\aff(x_1,\ldots,x_d))\Delta_{d-1}(x_1,\ldots,x_d)
    $$
    and the relation \cite[Equation (7.6)]{SW}
    $$
        \nabla_d(x_1,\dots,x_d)=d!\Delta_{d}(0,x_1,\dots,x_d),
    $$
    for any $x_1,\dots,x_{d+1}\in \RR^d$ we get 
    \begin{align*}
        \nabla_d(x_1,\dots,x_d)^\alpha&=(d!)^\alpha \Delta_{d}(0,x_1,\dots,x_d)^\alpha\\
        &=((d-1)!)^\alpha \dist(0,\aff(x_1,\dots,x_d))^\alpha \Delta_{d-1}(x_1,\dots,x_d)^\alpha.
    \end{align*}
    Using the affine Blaschke-Petkanschin formula \cite[Theorem 7.2.7]{SW} for any $\alpha>-1$ we get
    \begin{align*}
        J_{d,f}^\alpha(p) 
        &=((d-1)!)^\alpha\int_{(\RR^d)^d} \dist(0,\aff(x_1,\dots,x_d))^\alpha \Delta_{d-1} (x_1,\dots,x_d)^\alpha\\
        &\hspace{4cm}\times\prod_{i=1}^{d} f(p-\|x_i\|^2)\dd x_1\dots \dd x_d\\
        &=((d-1)!)^{\alpha+1}{\omega_d\over 2}\int_{A(d,d-1)}\int_{E^d} \dist(0,E)^\alpha\Delta_{d-1}(x_1,\dots,x_d)^{\alpha+1} \\
        &\hspace{4cm}\times \prod_{i=1}^{d} f(p-\|x_i\|^2)  \lambda_E^d(\dd(x_1,\dots,x_d))\,\mu_{d-1}(\dd E).
    \end{align*}
    Further, using that for any non-negative measurable function $f\colon A(d,d-1)\to \RR_+$ we have 
    $$
        \int_{A(d,d-1)}f(E)\mu_{d-1}(\dint E)={2\over \omega_{d}}\int_{\SS^{d-1}}\int_{0}^{\infty}f(H_t(u))\dint t\,\sigma_{d-1}(\dint u),
    $$
    where $H_t(u)$ denotes the hyperplane with normal vector $u$ and $\dist(0, H_t(u))=t$, we arrive at
    \begin{align*}
       J_{d,f}^\alpha(p)
        &=((d-1)!)^{\alpha+1}\int_{\SS^{d-1}}\int_{0}^{\infty}\int_{H_t(u)^d} t^\alpha \Delta_{d-1}(x_1,\dots,x_d)^{\alpha+1}\\
        &\hspace{3cm}\times\prod_{i=1}^{d} f(p-\|x_i\|^2) \lambda_{H_t(u)}^d(\dd(x_1,\dots,x_d))\,\dd t \,\sigma_{d-1}(\dd u).
    \end{align*}
    Further write $H_t(u) = H_0(u) + t u$ and for $i=1,\dots,d$ let $y_i=x_i-tu$ be the projection of $x_i$ from $H_t(u)$ to $H_0(u)$. Then $\|x_i\|^2 = \|y_i\|^2 + t^2$ and we have 
    \begin{align*}
        &\int_{H_t(u)^d} \Delta_{d-1}(x_1,\dots,x_d)^{\alpha+1} \prod_{i=1}^{d} f(p-\|x_i\|^2) \lambda_{H_t(u)}^d(\dd (x_1,\dots,x_d))\\
        &=\int_{H_0(u)^d} \Delta_{d-1}(y_1,\dots,y_d)^{\alpha+1} \prod_{i=1}^{d} f(p-t^2-\|y_i\|^2)\lambda_{H_0(u)}^d(\dd(y_i,\dots,y_d))=K_{d-1,f}^{\alpha+1}(p-t^2),
    \end{align*}
    where in the last equation we used the fact that $H_0(u)$ is isometric to $\RR^{d-1}$ and $\Delta_{d-1}(y_1,\dots,y_d)$ is rotationally invariant. By the equality above and noting that $\sigma_{d-1}(\SS^{d-1})=\omega_d$ we arrive at
    \begin{align*}
        J_{d,f}^\alpha(p) 
        &=\omega_d ((d-1)!)^{\alpha+1}\int_{0}^{\infty} t^\alpha K_{d-1,f}^{\alpha+1}(p-t^2) \dd t\\
        &={\omega_d\over 2} ((d-1)!)^{\alpha+1}\int_{-\infty}^{p} (p-s)^{\alpha-1\over 2} K_{d-1,f}^{\alpha+1}(s) \dd s\\
        &=\pi^{\frac{d}{2}}((d-1)!)^{\alpha+1}\frac{\Gamma\left(\frac{\alpha+1}{2}\right)}{\Gamma(\frac{d}{2})}(\frI^{\frac{\alpha+1}{2}} K_{d-1,f}^{\alpha+1})(p),
    \end{align*}
    which holds for any $p\in \inter E$. Hence, on $\inter E$ and for any $\alpha>0$ we have
    \begin{align*}
        J_{d+1,f}^{\alpha-1}=(d!)^\alpha\frac{\pi^{\frac{d+1}{2}}\Gamma\left(\frac{\alpha}{2}\right)}{\Gamma(\frac{d+1}{2})}(\frI^{\frac{\alpha}{2}} K_{d,f}^{\alpha}).
    \end{align*}
    Further note that since $K_{d,f}^{\alpha}(p)$ is non-negative we have by \eqref{eq:KInequality} for any $p\in\inter E$ that
    \begin{align*}
      (\frI^{\lceil{\alpha\over 2}\rceil}K_{d,f}^{\alpha})(p)&\leq  {(d+1)^{\max(1,\alpha)}\over (d!)^{\alpha}}{\pi^{d(d+1)\over 2}\Gamma({d+\alpha\over 2})^d\over \Gamma({d\over 2})^d}\Big(\frI^{\lceil{\alpha\over 2}\rceil}\Big[\big(\frI^{{d+\alpha\over 2}}f\big)^d(\frI^{{d\over 2}}f)\Big]\Big)(p).
    \end{align*}
    Since $f$ is non-negative we also have that $\frI^{\frac{d+\alpha}{2}}f$ and $\frI^{\frac{d}{2}}f$ are non-negative and monotone increasing for any $d\ge 2$ and $\alpha>0$. This implies
    \begin{align*}
        \Big(\frI^{\lceil{\alpha\over 2}\rceil}\Big[\big(\frI^{{d+\alpha\over 2}}f\big)^d(\frI^{{d\over 2}}f)\Big]\Big)(p)&=\int_{-\infty}^p\big((\frI^{{d+\alpha\over 2}}f)(t)\big)^d(\frI^{{d\over 2}}f)(t)(p-t)^{\lceil{\alpha\over 2}\rceil-1}\dint t\\
        &\le\big((\frI^{{d+\alpha\over 2}}f)(p)\big)^d \int_{-\infty}^p(\frI^{{d\over 2}}f)(t)(p-t)^{\lceil{\alpha\over 2}\rceil-1}\dint t\\
        &=\big((\frI^{{d+\alpha\over 2}}f)(p)\big)^d \big(\frI^{\lceil{\alpha\over 2}\rceil}\frI^{{d\over 2}}f\big)(p).
    \end{align*}
    Thus, by the semigroup property \eqref{eq:SemigroupProp} we get 
    \begin{align*}
       (\frI^{\lceil{\alpha\over 2}\rceil}K_{d,f}^{\alpha})(p)& \leq C_2\big((\frI^{{d+\alpha\over 2}}f)(p)\big)^d \big(\frI^{\lceil{\alpha\over 2}\rceil}\frI^{{d\over 2}}f\big)(p)=C_2\big((\frI^{{d+\alpha\over 2}}f)(p)\big)^d \big(\frI^{{d\over 2}+\lceil{\alpha\over 2}\rceil}f\big)(p)<\infty
    \end{align*}
    for any $p\in \inter E$ due to Lemma \ref{lm:technical} and since that $(\frI^{{d\over 2}+\lceil{\alpha\over 2}\rceil}f)(p)<\infty$ for any $p\in \inter E$. Hence, by \eqref{eq:DiffIntInverse} we obtain 
    \begin{align*}
        K_{d,f}^{\alpha} = \frac{\Gamma(\frac{d+1}{2})}{\pi^{\frac{d+1}{2}}(d!)^\alpha\Gamma(\frac{\alpha}{2})} \big(\frD^{\frac{\alpha}{2}}J_{d+1,f}^{\alpha-1}\big).
    \end{align*} 
    Now combining this with \eqref{eq:JFormula} we get for almost all $p\in\inter E$ that
    \begin{align*}
        K_{d,f}^{\alpha}(p)&=\frac{\Gamma(\frac{d+1}{2})}{\pi^{\frac{d+1}{2}}(d!)^\alpha\Gamma(\frac{\alpha}{2})} \left(\prod_{k=1}^{d+1}\pi^{k-{1\over 2}}\frac{\Gamma\left(\frac{\alpha-1+k}{2}\right)}{\Gamma(\frac{k}{2})}\right)\Big(\frD^{\frac{\alpha}{2}}\big[(\frI^{\frac{d+\alpha}{2}}f)^{d+1}\big]\Big)(p)\\
        &=\frac{\pi^{\frac{d}{2}}}{(d!)^\alpha} \left(\prod_{k=1}^{d}\pi^{k-{1\over 2}}\frac{\Gamma\left(\frac{\alpha+k}{2}\right)}{\Gamma(\frac{k}{2})}\right)\Big(\frD^{\frac{\alpha}{2}}\big[(\frI^{\frac{d+\alpha}{2}}f)^{d+1}\big]\Big)(p),
    \end{align*}
    with equality if $K_{d,f}^{\alpha}$ is continuous in $p$.
    The case $\alpha=0$ follows from direct computations, namely by using polar coordinates in $\RR^d$ we get
    \begin{align*}
        K_{d,f}^0(p)=\prod_{i=1}^{d+1}\int_{\RR^d} f(p-\|x_i\|^2)\dd x_i&=\prod_{i=1}^{d+1}{\pi^{d\over 2}\over \Gamma({d\over 2}+1)}\int_{0}^{\infty}f(p-r_i^2)r_i^{d-1}\dd r_i\\&=\pi^{d(d+1)\over 2}\Big((\frI^{d\over 2}f)(p)\Big)^{d+1},
    \end{align*}
    which coincides with \eqref{eq:KFormula} with $\alpha=0$ by recalling that $\frD^0f=f$.
\end{proof}

\begin{proof}[Proof of Corollary \ref{cor:RandomSimplexVolume}]
    First we note that 
    \[
        c(p):=\int_{\RR^d}f(p-\|x\|^2)\dd x
        =\pi^{d\over 2}(\frI^{d\over 2}f)(p)\in (0,\infty),
    \]
    and, hence, $\int_{\RR^d}g(x)\dint x=1$. Further we note that by the dominated convergence theorem $K_{d,f}^{\alpha}$ is continuous in $p\in\inter E$ if $f$ is continuous almost everywhere on $\inter E$. Indeed, let $p\in\inter E$ and $(p_n)_{n\in\NN}$ be a sequence, such that $p_n\to p$. Without loss of generality assume $p_n\in (p-\varepsilon,p+\varepsilon)$ for all $n\in \NN$. Defining functions
    \[
        g_n\colon(\RR^d)^{d+1}\to \RR_+, \quad (x_1,\ldots, x_{d+1})\mapsto\Delta_d(x_1,\ldots,x_{d+1})^\alpha \prod_{i=1}^{d+1}f(p_n-\|x_i\|^2),
    \]
    we have
     \[
        K_{d,f}^\alpha(p_n)=\int_{(\RR^d)^{d+1}} g_n(x_1,\ldots,x_{d+1})\dd x_1,\ldots,\dd x_{d+1}.
    \]
    If $f$ is monotonically increasing we have $f(p_n-\|x\|^2)\leq f(p+\varepsilon-\|x\|^2)$ for any $x\in\RR^d$ and $n\in\NN$. Then for any $(x_1,\ldots, x_{d+1})\in\RR^{d+1}$ we obtain 
    \[
        g_n(x_1,\ldots,x_{d+1})\leq \Delta_d(x_1,\ldots,x_{d+1})^\alpha \prod_{i=1}^{d+1}f(p+\varepsilon-\|x_i\|^2)=:G(x_1,\ldots,x_{d+1}),
    \]
    which is integrable majorant, since $p+\varepsilon\in\inter E$ and by \eqref{eq:KInequality} in combination with Lemma \ref{lm:technical} we have
    \begin{align*}
    \int_{(\RR^d)^{d+1}} G(x_1,\ldots,x_{d+1})\dd x_1\dots \dd x_{d+1}&=K_{d,f}^{\alpha}(p+\varepsilon)\leq C_1\big((\frI^{{d+\alpha\over 2}}f)(p+\varepsilon)\big)^{d}(\frI^{{d\over 2}}f)(p+\varepsilon)<\infty,
    \end{align*}
    for some constant $C_1\in (0,\infty)$ depending on $d$ and $\alpha$ only. The claim now follows since for almost every $x\in\RR^d$ it holds that $\lim_{n\to\infty}f(p_n-\|x\|^2)=f(p-\|x\|^2)$ due to the continuity of $f$ on $\inter E$ and by the dominated convergence theorem we get
    \[
        \lim_{n\to\infty}K^{\alpha}_{d,f}(p_n)=K^{\alpha}_{d,f}(p).
    \]
    The proof for monotonically decreasing function is the same with $p+\varepsilon$ replaced by $p-\varepsilon$.
    Then \eqref{eq:ExpectationParallelogram} and \eqref{eq:ExpectationSimplex} follow directly from
    \begin{align*}
        &\EE \big[\nabla_d(X_1,\ldots,X_{d})^{\alpha}\big]=c(p)^{-d}J_{d,f}^{\alpha}(p),\\
        &\EE \big[\Delta_d(X_1,\ldots,X_{d+1})^{\alpha}\big]=c(p)^{-d-1}K_{d,f}^{\alpha}(p),
    \end{align*}
    in combination with Theorem \ref{thm:kd_jd}.
    \end{proof}

    \begin{proof}[Proof of Theorem \ref{thm:typcell}]
    Given distinct points $x_1=(v_1,h_1),\dots,x_{d+1}=(v_{d+1},h_{d+1})$ of $\eta_{f,\gamma}$ let $w=w(x_1,\ldots,x_{d+1})$ denote the spatial coordinate of the apex of $\Pi(x_1,\dots,x_{d+1})$. Fix a Borel set $A\subseteq \cC'$ and consider
    \begin{align*}
        S_{f,\gamma,\nu}(A)&:=\EE \sum_{(v,h)\in \eta^*_{f,\gamma}} {\bf 1}(v\in[0,1]^d){\bf 1}(C((v,h),\eta^*_{f,\gamma})-v\in A)\Vol(C((v,h),\eta^*_{f,\gamma}))^\nu\\
        &={1\over (d+1)!}\EE \sum_{(x_1,\ldots,x_{d+1})\in (\eta_{f,\gamma})_{\neq}^{d+1}}{\bf 1}(w(x_1,\dots,x_{d+1})\in [0,1]^d)\\
        &\qquad\qquad\times {\bf 1}(\conv(v_1,\dots,v_{d+1})-w(x_1,\dots,x_{d+1})\in A)\\
        &\qquad\qquad\times {\bf 1}(\eta_{f,\gamma}\cap \inter\Pi^{\downarrow}(x_1,\ldots,x_{d+1})=\emptyset)\Delta_{d}(v_1,\dots,v_{d+1})^{\nu},
    \end{align*}
    where we used the construction of $\eta^*_{f,\gamma}$ described in \eqref{eq:DualProcess}.
    Note that $S_{f,\gamma,\nu}=\alpha(f,\gamma,\nu) \PP_{f,\gamma,\nu}$ if $\alpha(f,\gamma,\nu) \in(0,\infty)$. Applying the multivariate Mecke formula \cite[Corollary 3.2.3]{SW} we get
    \begin{equation}
        \begin{aligned}\label{eq:S_f}
        S_{f,\gamma,\nu}(A)&=\frac{\gamma^{d+1}}{(d+1)!}\int_{{(\RR^{d})}^{d+1}} \int_{E^{d+1}} {\bf 1}(\conv(v_1,\dots,v_{d+1})-w(x_1,\dots,x_{d+1})\in A)\\
        &\qquad\times {\bf 1}(w(x_1,\dots,x_{d+1})\in [0,1]^{d})\PP\big(\eta_{f,\gamma}\cap\inter (\Pi^{\downarrow}(x_1,\dots,x_{d+1}))=\emptyset\big)\\
        &\qquad\times \prod_{i=1}^{d+1} f(h_i) \Delta_{d}(v_1,\dots,v_{d+1})^{\nu}\dd(h_1,\dots,h_{d+1})\,\dd(v_1,\dots,v_{d+1}).      
        \end{aligned}
    \end{equation}
    We will now identify the vector $(v_1,\dots,v_{d+1},h_1,\dots,h_{d+1}) \in (\RR^{d})^{d+1} \times E^{d+1}$ with \\$(w,p,y_1,\dots,y_{d+1}) \in \RR^{d} \times \RR_+ \times (\RR^{d})^{d+1}$ as follows. Let $I_{d}$ be the set of $(d+1)$-tuples $(v_1,\ldots, v_{d+1})\in (\RR^{d})^{d+1}$ such that $v_1,\dots,v_{d+1}$ are affinely dependent. It is clear, that $I_d$ has Lebesgue measure $0$. Let $(v_1,\dots,v_{d+1},h_1,\dots,h_{d+1}) \in (\RR^{d}\setminus I_d)^{d+1} \times E^{d+1}$, $x_i=(v_i,h_i)$ and denote the apex of $\Pi(x_1,\dots,x_{d+1})$ by $(w,p) \in \RR^{d} \times \RR$. For $i=1,\dots,d+1$ we can represent $v_i=w+y_i$ with uniquely defined pairwise distinct $y_1,\dots,y_{d+1} \in \RR^{d}$. Hence we consider the transformation $\Psi$ defined as
    \begin{align*}
        \Psi\colon  &\RR^{d} \times \RR \times (\RR^{d})^{d+1} \to (\RR^{d} \times \RR)^{d+1}\\
        &(w,p,y_1,\dots,y_{d+1}) \mapsto (w+y_1,p-\|y_1\|^2,\dots,w+y_{d+1},p-\|y_{d+1}\|^2).
    \end{align*} 
    Note, that $\Psi(w,p,y_1,\dots,y_{d+1})=(v_1,h_1,\ldots,v_{d+1},h_{d+1})$ almost everywhere. Applying the transformation $\Psi$ in \eqref{eq:S_f} and using Lemma \ref{lm:jacobian} gives us 
    \begin{align*}
        S_{f,\gamma,\nu}(A)&=\frac{(2\gamma)^{d+1}}{d+1}\int_{{(\RR^{d})}^{d+1}} \int_{\RR^{d}} \int_{\RR} {\bf 1}(\conv(y_1,\dots,y_{d+1})\in A){\bf 1}(w\in [0,1]^{d})\\
        &\times \PP\big(\eta_{f,\gamma}(\inter\Pi_{(w,p)}^{\downarrow})=0\big)\Delta_{d}(y_1,\dots,y_{d+1})^{\nu+1}\prod_{i=1}^{d+1}f(p-\|y_i\|^2)\dd p\, \dd w\, \dd(y_1,\dots,y_{d+1}).
    \end{align*}
    By Lemma \ref{lm:IntensityParabola} we have for any $(w,p)\in\RR^d\times \RR$ that
    \[
        \PP(\eta_{f,\gamma}(\inter\Pi^\downarrow_{(w,p)})=0)=\exp\big(-\gamma\pi^{d\over 2}(\frI^{{d\over 2}+1}f)(p)\big),
    \]
    which implies
    \begin{align*}
        S_{f,\gamma,\nu}(A)&=\frac{(2\gamma)^{d+1}}{d+1}\int_{{(\RR^{d})}^{d+1}} \int_{\RR} {\bf 1}(\conv(y_1,\dots,y_{d+1})\in A)\exp\big(-\gamma\pi^{\frac{d}{2}}(\frI^{\frac{d}{2}+1}f)(p)\big)\\
        &\qquad\qquad\times\Delta_{d}(y_1,\dots,y_{d+1})^{\nu+1}\prod_{i=1}^{d+1}f(p-\|y_i\|^2)\dd p\, \dd y_1\dots \dd y_{d+1}.
    \end{align*}
     Note that in the case of an interval of type (ii) we have $\exp(-\gamma\pi^{\frac{d}{2}}(\frI^{\frac{d}{2}+1}f)(p))=0$ for any $p\ge b$. On the other hand if $p<a$, then $f(p-\|y\|^2)=0$ for any $y\in\RR^d$. Thus, we may restrict to $p\in E$.
     
     It remains to show that $\alpha(f,\gamma,\nu)=S_{f,\gamma,\nu}(\cC')$ is finite. By applying Fubini's theorem we have
    \begin{align*}
        \alpha(f,\gamma,\nu)=S_{f,\gamma,\nu}(\cC')&=\frac{(2\gamma)^{d+1}}{d+1}\int_{{(\RR^{d})}^{d+1}} \int_{E} \exp\big(-\gamma\pi^{\frac{d}{2}}(\frI^{\frac{d}{2}+1}f)(p)\big)\Delta_{d}(y_1,\dots,y_{d+1})^{\nu+1}\\
        &\hspace{5cm}\times\prod_{i=1}^{d+1}f(p-\|y_i\|^2)\dd p\,\dd y_1\dots\dd y_{d+1}\\
        &=\frac{(2\gamma)^{d+1}}{d+1}\int_{E} \exp\big(-\gamma\pi^{\frac{d}{2}}(\frI^{\frac{d}{2}+1}f)(p)\big)K_{d,f}^{\nu+1}(p)\dd p,
    \end{align*}
    which is finite under our assumptions. If additionally we have $(\frI^{\frac{d}{2}+\lceil\frac{\nu+1}{2}\rceil}f)(p)<\infty$ for any $p\in E$ then by \eqref{eq:KFormula} we obtain
    \begin{align*}
        \alpha(f,\gamma,\nu)&=\frac{(2\gamma)^{d+1}}\pi^{d(d+1)\over 2}{(d+1)(d!)^{\nu+1}} \prod_{k=1}^{d}\frac{\Gamma\left(\frac{\nu+1+k}{2}\right)}{\Gamma(\frac{k}{2})}\\
        &\hspace{2cm}\times\int_{E}\exp\big(-\gamma\pi^{\frac{d}{2}}(\frI^{\frac{d}{2}+1}f)(p)\big) \Big(\frD^{\frac{\nu+1}{2}}\big[(\frI^{\frac{d+\nu+1}{2}}f)^{d+1}\big]\Big)(p) \dd p.
    \end{align*}
    This finishes the proof.
\end{proof}

\begin{proof}[Proof of Theorem \ref{thm:moments}]
By the definition of $Z_{d,\gamma,\nu}(f)$ we have
    \begin{align*}
        \EE \Vol(Z_{d,\gamma,\nu}(f))^s &= \frac{1}{\alpha(f,\gamma,\nu)} \EE \sum_{(v,h)\in \eta^*_{f,\gamma}}{{\bf 1}}(v\in[0,1]^d)\Vol(C((v,h),\eta^*_{f,\gamma}))^{\nu+s}= \frac{\alpha(f,\gamma,\nu+s)}{\alpha(f,\gamma,\nu)},
    \end{align*}
    which together with \eqref{eq:BoundNormalizationConst} finishes the proof.
\end{proof}

\begin{proof}[Proof of Proposition \ref{prop:FiniteNormalizationConst}]
    First we note that for $\nu\ge-1$ according to Theorem \ref{thm:kd_jd}, in particular \eqref{eq:KInequality}, we have
    \begin{align*}
     \int_{E}&e^{-\gamma\pi^{\frac{d}{2}}(\frI^{\frac{d}{2}+1}f)(p)} K^{\nu+1}_{d,f}(p) \dd p\leq C(d,\nu)\int_{E}e^{-\gamma\pi^{\frac{d}{2}}(\frI^{\frac{d}{2}+1}f)(p)} \Big((\frI^{{d+\nu+1\over 2}}f)(p)\Big)^d(\frI^{{d\over 2}}f)(p) \dd p,
    \end{align*}
    for some explicit constant $C(d,\nu)\in(0,\infty)$ depending on $\nu$ and $d$ only. Thus, it is sufficient to show
    \begin{equation}\label{eq:19.03.24_1}
        Q_{\nu}(f):=\int_{E}e^{-\gamma\pi^{\frac{d}{2}}(\frI^{\frac{d}{2}+1}f)(p)} \Big((\frI^{{d+\nu+1\over 2}}f)(p)\Big)^d(\frI^{{d\over 2}}f)(p) \dd p<\infty.
    \end{equation}

    \textit{Case (i):} For $\nu=1$ equation \eqref{eq:19.03.24_1} becomes 
    \begin{align*}
        Q_{\nu}(f)= \int_{E}e^{-\gamma\pi^{\frac{d}{2}}(\frI^{\frac{d}{2}+1}f)(p)} \Big((\frI^{{d\over 2}+1}f)(p)\Big)^d(\frI^{{d\over 2}}f)(p) \dd p.
    \end{align*}
    Then noting that under \hyperref[item:F1]{(F1)} the function $\frI^{{d\over 2}+1}f$ is differentiable almost everywhere on $E$ with $(\frI^{{d\over 2}+1}f)'=\frI^{{d\over 2}}f$ and applying the change of variables $z=\gamma\pi^{\frac{d}{2}}(\frI^{\frac{d}{2}+1}f)(p)$ we obtain
    \begin{align*}
        Q_{\nu}(f)=(\gamma\pi^{d\over 2})^{-d-1}\int_{0}^\infty e^{-z}z^d \dd z = (\gamma\pi^{d\over 2})^{-d-1}\Gamma(d+1) < \infty.
    \end{align*}

    \textit{Case (ii):}
    Let $\int_{\RR} f(x) \dd x =: c < \infty$. For $-1\leq \nu <1$ and $d>2$ and for $\nu\in(-1,1)$ and $d=2$ by the Hölder inequality with $q_1={d\over 1-\nu}$, $q_2=\frac{d}{d+\nu-1}\in (1,\infty)$ for any $p\in\RR$ we get
    \begin{align*}
        (\frI^{d+\nu+1\over 2}f)(p)&={1\over \Gamma({d+\nu+1\over 2})}\int_{a}^{p}f(t)(p-t)^\frac{d+\nu-1}{2} \dd p \\
        &\le {1\over \Gamma({d+\nu+1\over 2})}\Big(\int_{a}^{p}f(t)\dd t\Big)^{1-\nu\over d}\Big(\int_{a}^p f(t) (p-t)^{\frac{d}{2}} \dd t\Big)^{d+\nu-1\over d} \\
        &= {c^{1-\nu\over d}\over \Gamma({d+\nu+1\over 2})} \Big(\Gamma\Big(\frac{d}{2}+1\Big)(\frI^{\frac{d}{2}+1} f)(p)\Big)^{d+\nu-1\over d}.
    \end{align*}
    Hence,
    \begin{align*}
        Q_{\nu}(f)&\leq c^{1-\nu}{\Gamma(\frac{d}{2}+1)^{d+\nu-1}\over \Gamma({d+\nu+1\over 2})^d}\int_{\RR}e^{-\gamma\pi^{\frac{d}{2}}(\frI^{\frac{d}{2}+1}f)(p)} \Big((\frI^{{d\over 2}+1}f)(p)\Big)^{d+\nu-1}(\frI^{{d\over 2}}f)(p) \dd p\\
    &= c^{1-\nu}{\Gamma(\frac{d}{2}+1)^{d+\nu-1}\over \Gamma({d+\nu+1\over 2})^d}(\gamma\pi^{d\over 2})^{-d-\nu}\Gamma(d+\nu)< \infty,
    \end{align*}
    where the last step follows from the same argument as in case (i). For $\nu=-1$ and $d=2$ we have 
    $$
        (\frI^{d+\nu+1\over 2}f)(p)=\int_{a}^p f(t)\dd t\leq c,
    $$
    and 
    \begin{align*}
        Q_{\nu}(f)&\leq c^d\int_{\RR}e^{-\gamma\pi^{\frac{d}{2}}(\frI^{\frac{d}{2}+1}f)(p)}(\frI^{{d\over 2}}f)(p) \dd p= c^{d}(\gamma\pi^{d\over 2})^{-1}< \infty,
    \end{align*}
    where in the last step we again used the same argument as in case (i).

    \textit{Case (iii):} Let $f\colon(0,\infty) \to \RR_+$ be a locally integrable function on $[0,\infty)$, which is regularly varying at $+\infty$ with index $\beta>-1$. Note that by \cite[Lemma 2.1]{MARTINEZ1992111} we have $(\frI^{\alpha}f)(p)<\infty$ for any $p>0$ and $\alpha>0$. At the same time we have $(\frI^{\alpha}f)(p)=0$ for any $p\leq 0$ and $\alpha>0$. Set $\delta={\beta+1\over 2}$, so that $\beta>-1+\delta$ and let $\nu \ge -1$. As a corollary of the Karamata representation (see \cite[p.17]{resnick_ERVPP}) there exists an $A\ge 1$ such that for all $x>A$ we have 
    \[
        x^{\beta-\delta}\le f(x) \le x^{\beta+\delta}.
    \]
    Then for any $\alpha\ge 1$ and $p> A$ we have
    \begin{align*}
        (\frI^{\alpha}f)(p)&=\frac{1}{\Gamma(\alpha)} \int_{0}^{p} f(x)(p-x)^{\alpha-1} \dd x \\
        &=\frac{1}{\Gamma(\alpha)} \left(\int_{0}^{A} f(x)(p-x)^{\alpha-1} \dd x+\int_{A}^{p} f(x)(p-x)^{\alpha-1} \dd x\right)\\
        & \le \frac{1}{\Gamma(\alpha)} \left(p^{\alpha-1}\int_{0}^{A}f(x)\dd x+\int_{A}^{p}x^{\beta+\delta}(p-x)^{\alpha-1}\dd x\right) \\
        & \le \frac{1}{\Gamma(\alpha)} \left(p^{\alpha-1}\int_{0}^{A}f(x)\dd x+p^{\alpha+\beta+\delta}B(\beta+\delta+1,\alpha)\right)\\
        &\le C_1 p^{\alpha+\beta+\delta}, 
    \end{align*} 
    for some constant $C_1$ independent of $p$, where $B(z_1,z_2)$ for $z_1,z_2>0$ denotes the beta function. Analogously we have for $p>A$ that $(\frI^{\alpha}f)(p) \ge C_2 p^{\alpha+\beta-\delta}$ for some constant $C_2$ independent of $p$. Hence, for $p>A$ we have 
    \begin{equation}\label{eq:19.03.24_2}
        (\frI^{\frac{d}{2}+1}f)(p) \ge c_1 p^{\frac{d}{2}+1+\beta-\delta},\qquad (\frI^{\frac{d+\nu+1}{2}}f)(p) \le c_2 p^{\frac{d+\nu+1}{2}+\beta+\delta},\qquad (\frI^{\frac{d}{2}}f)(p) \le c_3 p^{\frac{d}{2}+\beta+\delta},
    \end{equation} 
    for some constants $c_1,c_2,c_3>0$, independent of $p$. Since $\frI^{\frac{d+\nu+1}{2}}f$ and $\frI^{\frac{d}{2}}f$ are non-negative and monotonically increasing in $p$ we have
    \begin{align}
        \int_{0}^{A} e^{-\gamma\pi^{\frac{d}{2}}(\frI^{\frac{d}{2}+1}f)(p)} &\Big((\frI^{{d+\nu+1\over 2}}f)(p)\Big)^d(\frI^{{d\over 2}}f)(p) \dd p\le A\Big((\frI^{{d+\nu+1\over 2}}f)(A)\Big)^d(\frI^{{d\over 2}}f)(A)  < \infty.\label{eq:19.03.24_3}
    \end{align}
    On the other hand by \eqref{eq:19.03.24_2} we get
    \begin{align*}
        \int_{A}^\infty &e^{-\gamma\pi^{\frac{d}{2}}(\frI^{\frac{d}{2}+1}f)(p)} \Big((\frI^{{d+\nu+1\over 2}}f)(p)\Big)^d(\frI^{{d\over 2}}f)(p) \dd p \\
        &\hspace{2cm}\le c_2^dc_3\int_{0}^{\infty} e^{-\gamma\pi^{\frac{d}{2}}c_1 p^{\frac{d}{2}+1+\beta-\delta}}p^{\frac{d(d+\nu+2)}{2}+(d+1)(\beta+\delta)}\dd p\\
        &\hspace{2cm}=c\Gamma\Big(\frac{d(d+\nu)+2(d+1)(\beta+1+\delta)}{d+2(\beta+1-\delta)}\Big)<\infty,
    \end{align*}
    where $c\in(0,\infty)$ is some constant depending on $A$, $d$, $\nu$, $\beta$ and $\delta$. This together with \eqref{eq:19.03.24_3} and \eqref{eq:19.03.24_1}, where in this case $E=(0,\infty)$, finishes the proof.

    \textit{Case (iv):} Let $f\colon(-\infty,0)\to \RR_+$ be a locally integrable function on $(-\infty,0)$, which is regularly varying at $0$ with index $\beta>{d\over 2}+1$ and for some $\alpha> {d\over 2}+1$ it holds that $(\frI^{\alpha}f)(p)<\infty$ for all $p<0$. Then by Lemma \ref{lm:technical} we have $(\frI^{\mu}f)(p)<\infty$ for all $p<0$, $1\leq\mu\leq \alpha$ and it implies that $f$ satisfies \hyperref[item:F1]{(F1)}.  By \eqref{eq:frac_f_g} for any $q>0$ and $1\leq \mu\leq \alpha$ we have
    \[
        (\frI^{\mu}f)(-q)=q^{\mu-1}(\frI^\mu f_\mu)\left(q^{-1}\right)<\infty,   
    \]
    where $f_{\mu}(u)=u^{-\mu-1}f(-1/u)$ is a non-negative function, defined on the interval $(0,\infty)$. Further note that for any $p>0$ we also have
    \begin{align*}
        \int_{0}^pf_{\mu}(u)\dd u&=\int_{-\infty}^{-1/p}\big((-1/p-t)+1/p\big)^{\mu-1}f(t)\dd t\\
        &\leq 2^{\max(0,\mu-2)}\Big(\int_{-\infty}^{-1/p}(-1/p-t)^{\mu-1}f(t)\dd t+p^{-\mu+1}\int_{-\infty}^{-1/p}f(t)\dd t\Big)\\
        &=2^{\max(0,\mu-2)}\big((\frI^{\mu}f)(-1/p)+p^{-\mu+1}(\frI^1 f)(-1/p)\big)<\infty,
    \end{align*}
    where in the third step we used the inequality $(x+y)^{\mu-1}\leq 2^{\max(0,\mu-2)}(x^{\mu-1}+y^{\mu-1})$ which holds for all $\mu\ge 1$ and $x,y\ge 0$. Thus, for any $1\leq \mu< \min(\alpha,\beta)$ we conclude that $f_\mu\in L_{\rm loc}^{1,+}([0,\infty))$, which is regularly varying at $+\infty$ with index $\beta-\mu-1>-1$. Let $0<\delta\leq  {\beta-\mu\over 2}$. By using the same arguments as in the proof of case (iii) we get, that there exists $A:=A(\mu)>0$, such that for all $x>A$ we have 
    \[
        x^{\beta-\mu-1-\delta}\le f_\mu(x)\le x^{\beta-\mu-1+\delta}
    \]
    and, hence, there are constants $C_1,C_2$ independent of $x$ such that
    \begin{equation}\label{eq:30.05.24_10}
        C_1 x^{\beta-\mu-\delta}\leq x^{-\mu+1}(\frI^{\mu}f_{\mu})(x)=(\frI^{\mu}f)(-1/x)\leq C_2 x^{\beta-\mu+\delta}
    \end{equation}
    for any $x>A$. Further, since $1\leq {d\over 2}<{d\over 2}+1< \min(\alpha,\beta)$ and $1\leq {d+\nu+1\over 2}<\min(\alpha,\beta)$ for $-1\leq \nu<2\min(\alpha,\beta)-d-1$ 
    we get for any $p\leq B:=-1/A'<0$, where $A'=\min\{A(d/2), A(d/2+1),A((d+\nu+1)/2)\}$, by \eqref{eq:30.05.24_10} that
    \begin{equation}\label{eq:12.04.24_1}
        \begin{aligned}
            &(\frI^{\frac{d}{2}+1}f)(p) \ge c_1(-p)^{{d\over 2}+1-\beta+\delta},\,  (\frI^{\frac{d+\nu+1}{2}}f)(p) \le c_2 (-p)^{{d+\nu+1\over 2}-\beta-\delta}, \, (\frI^{\frac{d}{2}}f)(p) \le c_3 (-p)^{{d\over 2}-\beta-\delta},
        \end{aligned}
    \end{equation}
    for some constants $c_1$, $c_2$, $c_3>0$, independent of $p$, and $\delta=(\beta-d/2-1)/2$. Further consider
    \begin{align}
        Q_{\nu}(f)&=\int_{-\infty}^{B}e^{-\gamma\pi^{\frac{d}{2}}(\frI^{\frac{d}{2}+1}f)(p)}\Big((\frI^{\frac{d+\nu+1}{2}}f)(p)\Big)^d (\frI^{\frac{d}{2}}f)(p)\dd p \notag\\
        &\qquad\qquad + \int_{B}^0 e^{-\gamma\pi^{\frac{d}{2}}(\frI^{\frac{d}{2}+1}f)(p)}\Big((\frI^{\frac{d+\nu+1}{2}}f)(p)\Big)^d (\frI^{\frac{d}{2}}f)(p)\dd p.\label{eq:20.03.24_2}
    \end{align}
    Note that since $f$ is positive and ${d+\nu+1\over 2}\ge 1$ we have, that $\frI^{\frac{d+\nu+1}{2}}f$ is monotonically increasing in $p$ and, hence, we obtain
    \begin{align}
        &\int_{-\infty}^B e^{-\gamma\pi^{\frac{d}{2}}(\frI^{\frac{d}{2}+1}f)(p)}\Big((\frI^{\frac{d+\nu+1}{2}}f)(p)\Big)^d (\frI^{\frac{d}{2}}f)(p) \dd p\notag\\
        &\hspace{4cm}\le \Big((\frI^{\frac{d+\nu+1}{2}}f)(B)\Big)^d\int_{-\infty}^0 e^{-\gamma\pi^{\frac{d}{2}}(\frI^{\frac{d}{2}+1}f)(p)} (\frI^{\frac{d}{2}}f)(p) \dd p\notag\\
        &\hspace{4cm}= \Big((\frI^{\frac{d+\nu+1}{2}}f)(B)\Big)^d\gamma^{-1}\pi^{-{d\over 2}} \int_{0}^\infty e^{-z} \dd z < \infty,\label{eq:20.03.24_3}
    \end{align}
    where we again used that the function $\frI^{\frac{d}{2}+1}f$ is differentiable almost everywhere with derivative $\frI^{\frac{d}{2}}f$ and applied the change of variables $z=\gamma\pi^{\frac{d}{2}}(\frI^{\frac{d}{2}+1}f)(p)$. Finally, using \eqref{eq:12.04.24_1} we obtain
    \begin{align*}
        \int_{B}^0 e^{-\gamma\pi^{\frac{d}{2}}(\frI^{\frac{d}{2}+1}f)(p)}&\Big((\frI^{\frac{d+\nu+1}{2}}f)(p)\Big)^d (\frI^{\frac{d}{2}}f)(p)\dd p \\
        &\quad\leq c_2^d c_3\int_{B}^{0} e^{-\gamma\pi^{d\over 2}c_1(-p)^{{d\over 2}+1-\beta+\delta}}((-p)^{\frac{d+\nu+1}{2}-\beta-\delta})^d(-p)^{\frac{d}{2}-\beta-\delta}\dd p\\
        &\quad\leq c_2^d c_3\int_{0}^{\infty} e^{-\gamma\pi^{d\over 2}c_1s^{\beta-{d\over 2}-1-\delta}}s^{(d+1)(\beta+\delta)-{d(d+\nu+2)\over 2}-2}\dd s\\
        &\quad=c \Gamma\left({2(d+1)(\beta+\delta)-d(d+\nu+2)-2\over 2(\beta-\delta)-d-2}\right)<\infty, 
    \end{align*}
    where $c\in(0,\infty)$ is a constant depending on $d$, $\nu$, $\beta$, $\delta$ and $B$. Together with \eqref{eq:20.03.24_2} and \eqref{eq:20.03.24_3}
    this finishes the proof.
    \end{proof}

    \begin{proof}[Proof of Corollary \ref{cor:decomposition}] 
    First note, that by Theorem \ref{thm:typcell} we have
    \begin{align*}
        \PP_{f,\gamma,\nu}(\cdot)&=\frac{(2\gamma)^{d+1}}{(d+1)\alpha(f,\gamma,\nu)} \int_{(\RR^{d})^{d+1}}\dd y_1\dots \dd y_{d+1} \, \int_{E}\dd p\,{\bf 1}(\conv(y_1,\dots,y_{d+1})\in \cdot) \\
        &\qquad\times e^{-\gamma\pi^\frac{d}{2} (\frI^{\frac{d}{2}+1}f)(p)} \Delta_{d}(y_1,\dots,y_{d+1})^{\nu+1}\prod_{i=1}^{d+1}f(p-\|y_i\|^2).
    \end{align*}  
    Applying the change of variables $y_i=\phi(p)x_i$, where $1\leq i\leq d+1$, and using \eqref{decomp} we get
    \begin{align*}
        \PP_{f,\gamma,\nu}(\cdot)&=\frac{(2\gamma)^{d+1}}{(d+1)\alpha(f,\gamma,\nu)} \int_{(\RR^{d})^{d+1}}\dd x_1\dots \dd x_{d+1} \, \int_{E}\dd p\,{\bf 1}(\conv(\phi(p)x_1,\dots,\phi(p)x_{d+1})\in \cdot) \\
        &\qquad\times \phi(p)^{d(d+\nu+2)}e^{-\gamma\pi^\frac{d}{2} (\frI^{\frac{d}{2}+1}f)(p)} \Delta_{d}(x_1,\dots,x_{d+1})^{\nu+1}\prod_{i=1}^{d+1}f(p-\phi(p)^2\|x_i\|^2)\\
        &=\frac{(2\gamma)^{d+1}}{(d+1)\alpha(f,\gamma,\nu)} \int_{(\RR^{d})^{d+1}}\dd x_1\dots \dd x_{d+1} \, \int_{E}\dd p\,{\bf 1}(\conv(\phi(p)x_1,\dots,\phi(p)x_{d+1})\in \cdot) \\
        &\qquad\times \phi(p)^{d(d+\nu+2)}f(p)^{d+1}e^{-\gamma\pi^\frac{d}{2} (\frI^{\frac{d}{2}+1}f)(p)} \Delta_{d}(x_1,\dots,x_{d+1})^{\nu+1}\prod_{i=1}^{d+1}\psi(\|x_i\|^2),
    \end{align*}  
    which finishes the proof.
    \end{proof}

    \begin{proof}[Proof of Proposition \ref{lm:decomp}]    
    The proof of this lemma follows similar arguments given by Miles (see \cite[pp. 376-377]{Miles_IRS}). We start by noting that \eqref{decomp} with $p=l$ for some $l \in \supp(f)\cap \supp(\phi)=\supp(f)\neq \emptyset$ leads to  
    \[
        \psi(s^2)=f(l)^{-1}f(l-\phi(l)^2s^2),
    \]
    which holds for any $s\ge 0$. Substituting this into \eqref{decomp} yields
    \begin{equation}\label{eq:U3}
        f(p-\phi(p)^2s^2)f(l)=f(p)f(l-\phi(l)^2s^2), \qquad\qquad p\in \inter E,\, s\ge 0,\, l\in \supp(f).
    \end{equation}
    Alternatively, defining $x:=p$, $y:=l-\phi(l)^2s^2$, $V_l(x):=\phi(x)^2/\phi(l)^2$ and 
    $U_l(x):= f(x)f(l)^{-1}$ we get
    \begin{align}\label{eq:U}
        U_l(x-V_l(x)(l-y))=U_l(x)U_l(y),\qquad x\in\inter E, y\leq l,
    \end{align}
    which holds for any $l\in\supp(f)$.

    Further we note that for any $p\ge l$, $p\in\inter E$, by substituting $s=\phi(p)^{-1}\sqrt{p-l}\in [0,\infty)$ (since $\phi(p)>0$ for any $p\in\inter E$) into \eqref{eq:U3} we have
    \[
        f(p)f\Big(l-{\phi(l)^2\over \phi(p)^2}(p-l)\Big)=f(l)^2>0.
    \]
    This in particular implies that $f(p)>0$ for any $p\ge l$, $p\in\inter E$ and, hence, $\supp (f) = (q_1,q_2)$ for some $-\infty\leq q_1<q_2\leq \infty$. Note that in the case of an interval of type (i) and (iii) we have $q_2=\infty$ and in the case of an interval of type (ii) we have $q_2=b\in\RR$. In the case of an interval of type (i) we also have $q_1\ge a\in \RR$ and, thus, without loss of generality we may assume $q_1=a$, leading to $\supp(f)=(a,\infty)=\inter E$ in this case. Finally in the case of an interval of type (iii) we may assume without loss of generality $q_1=-\infty$, since otherwise we would be in the situation suitable for case (i). Thus, $\supp(f)=\RR$ in the case of the interval of type (iii).

    Our next aim is to determine the type of the function $\phi$. Assume first that $q_1\in\RR$, which is possible for intervals of type (i) or (ii). Let $l_1,l_2\in\supp(f)$ be arbitrary, then from \eqref{eq:U3} we have
    \[
        f(l_1-\phi(l_1)^2s^2)f(l_2)=f(l_2-\phi(l_2)^2s^2)f(l_1),
    \]
    which holds for any $s\ge 0$. In particular the latter means that $l_1-\phi(l_1)^2s^2\in \supp (f) = (q_1,q_2)$ if and only if $l_2-\phi(l_2)^2s^2\in\supp (f) = (q_1,q_2)$, which implies
    \[
        s^2\leq {l_1-q_1\over \phi(l_1)^2}={l_2-q_1\over \phi(l_2)^2}
    \]
    for any $l_1,l_2\in\supp(f)$. Fixing $l_1=\min\{(q_1+q_2)/2,q_1+1\}\in(q_1,q_2)$ and setting $l_2=x$ we get
    \[
        \phi(x)^2={\phi(l_1)^2\over l_1-q_1}(x-q_1),\qquad x\in (q_1,q_2).
    \]
    In particular
    \begin{equation}\label{eq:V1}
        V_l(x)={\phi(x)^2\over \phi(l)^2}={x-q_1\over l-q_1}=1+c(l)(x-l),\qquad x\in (q_1,q_2),\, c(l)=(l-q_1)^{-1}>0.
    \end{equation}  
    
    Now consider the case when $q_1=-\infty$, which may correspond to the intervals of type (ii) and (iii). Differentiating the equation \eqref{eq:U} with respect to $x$ and with respect to $y$ assuming that $x,y\in\inter E$, $y\leq l$ and $x-V_l(x)(l-y)\in\inter E$ gives us
    \begin{align}
        &\frac{\partial}{\partial x} U_l(x-V_l(x)(l-y)) = (1-V_l'(x)(l-y))U_l'(x-V_l(x)(l-y))=U_l'(x)U_l(y), \label{eq:D1}\\
        &\frac{\partial}{\partial y} U_l(x-V_l(x)(l-y)) = V_l(x)U_l'(x-V_l(x)(l-y))=U_l(x)U_l'(y). \label{eq:D2}
    \end{align}   
    Further note, that in the case of an admissible function $f$ we have $U_l'(x-V_l(x)(l-x))\neq 0$ for almost all $x\leq l$. Indeed, assume $U_l'(x-V_l(x)(l-x))=0$ on some interval $I:=[x_1,x_2]\subset(-\infty,l]$. Then $U_l(x-V_l(x)(l-x))=C$ for some $C>0$, implying 
    \[
        f(x-\phi(l)^{-2}\phi(x)^2(l-x))= C f(l)
    \]
    for all $x\in I$. On the other hand by \eqref{eq:U} we have
    \[
        Cf(l)=f(x-\phi(l)^{-2}\phi(x)^2(l-x)) =f(x)^2f(l)^{-1},\qquad x\in I.
    \] 
    Hence $f(x)=\sqrt{C}f(l)$ for all $x\in I$. Since by \eqref{eq:U3} for any $s\ge 0$ and $x\in I$ it holds that
    \[
        f(x-\phi(x)^2s^2)f(l)= f(x)f(l-\phi(l)^2s^2),
    \]
    we have $f(x-\phi(x)^2s^2)= \sqrt{C}f(l-\phi(l)^2s^2)$. The latter means that for any fixed $s\in\RR$ we have that the function $f$ is a constant (i.e. $\sqrt{C}f(l-\phi(l)^2s^2)$) on the set $I(s)=\{x-\phi(x)^2s^2\colon x\in I\}$. Now note that for any $s\ge 0$ there is $s'\ge 0$, $s'\neq s$, such that $I(s)\cap I(s')\neq \emptyset$.
    
    Thus, we conclude that $f(x)=\sqrt{C}f(l)$ for all $x\leq x_2$. It is easy to ensure that $(\frI^{{d\over 2}+1}f)(x)=\infty$ for any $x\in\RR$ and the function is not admissible. Now setting $x=y\leq l$ in \eqref{eq:D1} and \eqref{eq:D2}, eliminating $U_l'(x-V_l(x)(l-x))\neq 0$ and solving the resolving differential equation
    \begin{align*}
        V_l(x)=1-V_l'(x)l+V_l'(x)x,\qquad x\leq l,
    \end{align*}
    we obtain 
    \begin{equation}\label{eq:V2}
        V_l(x)=1+c(x-l),\qquad x\leq l,
    \end{equation}
    for some constant $c\in \RR$. Note that for $c>0$ we additionally have $x\ge l-c^{-1}$ since $V_l(x)\ge 0$ by definition. On the other hand since $(-\infty,q_2)=\supp(f)\subset\supp(V_l)=\supp(\phi)$ we have that $c>0$ is not possible. Let us point out that for any $l\in(-\infty,q_2)$ we have that $c\leq 0$ is a constant, but its value may depend on $l$. We stress it by writing $c=c(l)$ to specify this dependence if needed.

    Further, combining \eqref{eq:V1} and \eqref{eq:V2} with \eqref{eq:U} we get
    \begin{align}\label{eq:U1}
        U_l(x+y-l+c\cdot (l-x)(l-y))=U_l(x)U_l(y),\qquad q_1< x,y\leq l.    
    \end{align}
    We distinguish between the cases $c=0$ and $c \neq 0$. 
    
    For $c=0$ we have $q_1=-\infty$ and by \eqref{eq:U1} substituting $\tilde x=l-x\ge 0$, $\tilde y=l-y\ge 0$ and $\tilde U_l(x)=U_l(l-x)$ we arrive at the well-known Cauchy-Hamel equation
    \begin{align*}
        \tilde U_l(\tilde x+\tilde y)=\tilde U_l(\tilde x)\tilde U_l(\tilde y),\qquad \tilde x,\tilde y\ge 0.
    \end{align*}
    whose only well-behaved non-zero solutions are $\tilde U_l(\tilde{x})=f(l-\tilde x)/f(l)=e^{-\lambda \tilde{x}}$, $\tilde x\ge 0$ for $\lambda \in \RR$ and, hence, $f(x)=f(l)e^{-\lambda l}e^{\lambda x}=c_1e^{\lambda x}$, $x\leq l$. If $\supp (f) \cap (0,\infty) \neq \emptyset$ we assume $l>0$. Let $\widetilde{l}:=lt$ for some $t\ge 1$ be such that $f(\widetilde{l})>0$. By the same arguments as above we get
    \[
        f(x)=f(\widetilde{l})e^{-\lambda \widetilde{l}}e^{\lambda x}=f(lt)e^{-\lambda lt}e^{\lambda x},\qquad t\ge 1, x\le lt.
    \]
    Since $l\le lt$ we have $f(l)=f(lt)e^{-\lambda lt } e^{\lambda l}$ for all $t\ge 1$ and substituting $x=lt$ yields
    \[
        f(x)=f(l)e^{-\lambda l} e^{\lambda x }=c_1e^{\lambda x}
    \]
    for all $x\ge l$ and, hence, for all $x\in(-\infty,q_2)$. If on the other hand $\supp(f) \subset (-\infty,0)$, we arrive at $f(x)=c_1e^{\lambda x}$, $x\in (-\infty,q_2)$, by the same arguments as above with $0\le t \le 1$. Now note that for $\lambda \le 0$ we have by direct computations that $(\frI^{\frac{d}{2}+1}f)(x)=\infty$ for all $x\in \RR$ and, thus, $f$ is not admissible. If $\lambda>0$ and $q_2=\infty$ (meaning that $E=\RR$ is an interval of type (iii)) it was shown in Example \ref{ex:BetaModels} that $f$ is admissible. If $\lambda>0$ and $q_2<\infty$ (meaning that $E=(-\infty,q_2)$ is an interval of type (ii)) $f$ is not admissible since $(\frI^{\frac{d}{2}+1}f)(q_2)<\infty$.
    
    Further consider the case $c(l) > 0$, which is only possible if $q_1\in\RR$. Defining $W_l(x):=U_l(x-c(l)^{-1}+l)$, $\theta :=x+c(l)^{-1}-l=x-q_1$ and $\xi :=y+c(l)^{-1}-l=y-q_1$, where we recall that $c(l)=(l-q_1)^{-1}$ in this case {(see \eqref{eq:V1})}, equation \eqref{eq:U1} becomes the Cauchy-Hamel equation
    \begin{align*}
        W_l(c(l)\theta\xi)&=U_l(x-(1-c(l)l+c(l)x)(l-y))= U_l(x)U_l(y)=W_l(\theta)W_l(\xi), 
    \end{align*}
    where $0<\xi, \theta \leq c(l)^{-1}.$ The unique non-trivial solution to this equation is $W_l(\theta)=(c(l)\theta)^\beta$, $0<\theta \leq c^{-1}$, for some $\beta \in \RR$ (see \cite[Section 1.1]{bingham_regular_variation} for more details) and therefore we get $f(x)=f(l)(l-q_1)^{-\beta}(x-q_1)^\beta$, $q_1<x\leq l$, where $f(l) \in \RR_+\bsl\{0\}$ and $\beta \in \RR$. As in the previous case consider $\widetilde l=lt$ with $t\ge 1$ if $l>0$ and $0\leq t\leq 1$ if $l<0$. If $f(\widetilde l)\neq 0$ by the same arguments as above we obtain
    \[
        f(x)=f(lt)(lt-q_1)^{-\beta}(x-q_1)^\beta, \qquad q_1<x\leq lt,
    \]
    and since $l\leq lt$ by substituting first $x=l$ and then writing $y:=tl$ we get
    \[
        f(y)=f(l)(l-q_1)^{-\beta}(y-q_1)^\beta=c_1(y+c_2)^\beta,\qquad -c_2=q_1<y<q_2.
    \]
    If $q_2=\infty$, which corresponds to the interval of type (i) we have that $f$ if admissible, i.e. locally integrable, if and only if $\beta>-1$. If $q_2<\infty$, implying that $E=(-\infty,q_2)$ is an interval of type (ii), we conclude that the function $f$ is not admissible since $(\frI^{{d\over 2}+1}f)(q_2)=c_3(q_2-q_1)^{\beta+d/2+1}<\infty$.
    
    Finally, for $c(l)<0$ we have that $q_1=-\infty$. Defining $W_l$, $\theta$ and $\xi$ as in the case $c(l)>0$ the equation \eqref{eq:U1} becomes the Cauchy-Hamel equation
    \begin{align*}
        W_l(c(l)\theta\xi)&=W_l(\theta)W_l(\xi), \qquad \xi, \theta \leq  c(l)^{-1}.
    \end{align*}
    The unique non-trivial solution to this equation is $W_l(\theta)=(c(l)\theta)^{-\beta}$, $\theta \leq c(l)^{-1}$. Hence, in this case $f(x)=f(l)(c(l)x+1-c(l)l)^{-\beta}$, $x\le l$, where $f(l) \in \RR_+\bsl\{0\}$, $c(l)<0$ and $\beta \in \RR$. Our next aim is to show that in the case of a finite value $q_2\in\RR$ we have $c(l)=-(q_2-l)^{-1}$ and if $q_2=\infty$ we get $c(l)\ge 0$ for any $l\in\RR$, which is a contradiction to $c(l)<0$. Let $l_1,l_2\in (-\infty, q_2)$, $l_1\le l_2$. Then $\phi(x)^2=\phi(l_1)^2(1+c(l_1)(x-l_1))=\phi(l_2)^2(1+c(l_2)(x-l_2))$ for all $x\le l_1\le l_2$ and hence
    \[
        0 = \phi(l_1)^2-\phi(l_2)^2+\phi(l_2)^2c(l_2)l_2-\phi(l_1)^2c(l_1)l_1+x(\phi(l_1)^2c(l_1)-\phi(l_2)^2c(l_2))
    \]
    for all $x\le l_1\le l_2$, which yields 
    \begin{align}\label{eq:zero1}
        \phi(l_2)^2c(l_2)=\phi(l_1)^2c(l_1)
    \end{align}
    and 
    \begin{align}\label{eq:zero2}
        \phi(l_1)^2-\phi(l_2)^2+\phi(l_2)^2c(l_2)l_2-\phi(l_1)^2c(l_1)l_1 = 0.    
    \end{align}
    Combining \eqref{eq:zero1} and \eqref{eq:zero2} we get
    \begin{align}\label{eq:cprime}
        c(l_1)=\frac{\phi(l_2)^2-\phi(l_1)^2}{\phi(l_1)^2(l_2-l_1)}=\frac{\phi(l_2)^2}{\phi(l_1)^2(l_2-l_1)}-\frac{1}{l_2-l_1}=\frac{\phi(l_2)^2}{l_2}\frac{1}{\phi(l_1)^2(1-\frac{l_1}{l_2})}-\frac{1}{l_2-l_1}    
    \end{align}
    for all $l_2\ge l_1 > -\infty$.
    Let first $q_2=\infty$, then we have $\supp (f) = \RR$. Taking $l:=l_1$ and letting $l_2\to q_2=\infty$ we get
    \begin{align*}
        c(l)=\lim_{l_2\to \infty} \frac{\phi(l_2)^2}{l_2}\frac{1}{\phi(l_1)^2(1-\frac{l_1}{l_2})}-\frac{1}{l_2-l_1} = \begin{cases}
            0, \qquad &\text{ if } \frac{\phi(l_2)^2}{l_2} \to 0,\\
            \frac{c}{\phi(l_1)^2}>0, \qquad &\text{ if } \frac{\phi(l_2)^2}{l_2} \to c\in(0,\infty),\\
            \infty, \qquad &\text{ if } \frac{\phi(l_2)^2}{l_2} \to \infty,
        \end{cases}
    \end{align*}
    for all $l\in \supp(f)$, which is a contradiction to $c(l)<0$ for all $l\in \supp(f)$. Hence, $q_2<\infty$. Note that for $f$ to be admissible we assume $\lim_{p\to q_2} f(p)=\infty$ and $f(p)<\infty$ for all $p\in (-\infty, q_2)$. Taking into account that from \eqref{eq:U3} we have
    \[
        f(p-\phi(p)^2s^2)f(l)=f(p)f(l-\phi(l)^2s^2), \qquad\qquad p\in \inter E,\, s\ge 0,\, l\in \supp(f)
    \]
    and taking the limit as $p\to q_2$ we get $f(p-\phi(p)^2s^2)\to \infty$ and, hence, $\phi(p)\to 0$ as $p\to q_2$. Then setting $l=l_1$ and letting $l_2\to q_2$ equation \eqref{eq:cprime} becomes
    \[
        c(l)=-\frac{1}{q_2-l}.
    \]
    By the same arguments as in the case $c(l)>0$ and writing $c_1:=f(l)(q_2-l)^{\beta}$ and $c_2:=q_2$ we get 
    \[
        f(x)=f(l)(q_2-l)^{\beta}(-x+q_2)^{-\beta}=c_1(-x+c_2)^{-\beta}
    \]
    for all $x<c_2$. This finishes the proof.
    
\end{proof}

\section*{Acknowledgements}

The authors were supported by the DFG under Germany's Excellence Strategy  EXC 2044 -- 390685587, \textit{Mathematics M\"unster: Dynamics - Geometry - Structure}. AG was supported by the DFG priority program SPP 2265 \textit{Random Geometric Systems}.

\end{document}